\documentclass[11pt]{article}

\usepackage[margin=1in]{geometry}

\usepackage[utf8]{inputenc}
\usepackage[T1]{fontenc}
\usepackage[english]{babel}

\usepackage{amsmath,amssymb,amsthm,mathtools}
\usepackage{mathrsfs}
\usepackage{enumitem}
\usepackage{graphicx}
\usepackage[all]{xy}
\usepackage{booktabs}
\usepackage{tabularx}
\usepackage{xcolor}
\usepackage{url}
\usepackage{hyperref}
\usepackage{authblk}

\theoremstyle{plain}
\newtheorem{theorem}{Theorem}
\newtheorem{proposition}[theorem]{Proposition}
\newtheorem{corollary}[theorem]{Corollary}
\newtheorem{lemma}[theorem]{Lemma}

\theoremstyle{definition}
\newtheorem{definition}[theorem]{Definition}

\newtheorem{construction}[theorem]{Construction}

\theoremstyle{remark}
\newtheorem{remark}[theorem]{Remark}

\newcommand{\kk}{\mathbb{K}}
\newcommand{\Rep}{\mathsf{Rep}}
\newcommand{\Ext}{\operatorname{Ext}}
\newcommand{\Hom}{\operatorname{Hom}}
\newcommand{\End}{\operatorname{End}}
\newcommand{\Aut}{\operatorname{Aut}}
\newcommand{\Span}{\operatorname{span}}
\newcommand{\id}{\mathrm{id}}

\newcommand{\Gl}[1]{G_l^{#1}}
\newcommand{\Gr}[1]{G_r^{#1}}

\newcommand{\fnm}[1]{#1}
\newcommand{\sur}[1]{#1}
\newcommand{\jyear}[1]{}
\newcommand{\equalcont}[1]{}
\newcommand{\orgdiv}[1]{#1}
\newcommand{\orgname}[1]{#1}
\newcommand{\orgaddress}[1]{#1}
\newcommand{\street}[1]{#1}
\newcommand{\city}[1]{#1}
\newcommand{\country}[1]{#1}

\title{Cohomological Maschke's Theorem for Generalized Digroups}

\author[1]{\fnm{Jos\'e Gregorio }\sur{Rodr\'iguez-Nieto}\thanks{\texttt{jgrodrig@unal.edu.co}}}

\author[1]{\fnm{Olga Patricia }\sur{Salazar-D\'iaz}\thanks{\texttt{opsalazard@unal.edu.co}}}

\author[2]{\fnm{Andr\'es }\sur{Sarrazola-Alzate}\thanks{\texttt{andres.sarrazola@eia.edu.co}}}

\author[3]{\fnm{Ra\'ul }\sur{Vel\'asquez}\thanks{\texttt{raul.velasquez@udea.co}}}

\affil[1]{\orgdiv{Departamento de Matem\'aticas}, \orgname{Universidad Nacional de Colombia},
	\orgaddress{\street{Carrera 65 No. 59A-110}, \city{Medell\'in}, \country{Colombia}}}

\affil[2]{\orgname{STEAM School, Universidad EIA},
	\orgaddress{\street{Calle 23 AA Sur Nro. 5-200, Kil\'ometro 2+200 Variante al Aeropuerto Jos\'e Mar\'ia C\'ordova},
		\city{Envigado}, \country{Colombia}}}

\affil[3]{\orgdiv{Instituto de Matem\'aticas}, \orgname{Universidad de Antioquia},
	\orgaddress{\street{Calle 67 No. 53-108}, \city{Medell\'in}, \country{Colombia}}}

\date{}

\begin{document}

\maketitle

\begin{abstract}
	We study Maschke-type phenomena in the representation theory of generalized digroups. For a generalized digroup \(D\), we construct an associative enveloping algebra \(A_D\) and prove that \(\Rep(D)\) is equivalent to the category of left \(A_D\)-modules. Under a Maschke-type condition on the group component, we show that short exact sequences split on the \(\rho\)-side, while the obstruction to full splitting is described by cocycles and identified with \(\Ext^1_{\Rep(D)}(Q,W)\). We also derive a spectral sequence with consequences for splitting and non-semisimplicity.
\end{abstract}

\noindent\textbf{Keywords:} generalized digroup, representation, Maschke theorem, extension, spectral sequence, semisimplicity.

\medskip
\noindent\textbf{MSC 2020:} 20M10, 20C99, 16D90, 18G15.
	
\section{Introduction}
\label{sec:introduction}

Maschke's theorem (\cite[Chapter~8]{JamesLiebeck2001}, \cite[Section 2]{Serre1977}) is one of the foundational results of finite-dimensional representation theory.
If \(G\) is a finite group and the characteristic of the ground field does not divide the order of \(G\),
then every finite-dimensional \(G\)-module is semisimple. Equivalently, every \(G\)-submodule
admits a \(G\)-stable complement, or, in homological terms, every short exact sequence of
\(G\)-modules splits.

The strength of Maschke's theorem lies not only in its statement but also in its mechanism.
Given a \(G\)-submodule \(W\subseteq V\), one starts with an arbitrary linear projection
\(\pi:V\to W\), which need not be \(G\)-equivariant. Averaging over the group produces the operator
\[
\pi_G=\frac{1}{\lvert G\rvert}\sum_{g\in G} g\cdot \pi \cdot g^{-1},
\]
which is \(G\)-equivariant and still projects onto \(W\). Hence \(\ker(\pi_G)\) is a \(G\)-stable
complement of \(W\). In this way, semisimplicity is obtained by a canonical averaging process on
the group side \cite[Chapter~8, pp.~70--77]{JamesLiebeck2001}.

The aim of this article is to understand how far this philosophy extends to generalized digroups.
Generalized digroups were introduced as a two-sided extension of groups, governed by two
associative products together with a nontrivial set of bar-units, called the halo
\cite{SalazarDiazVelasquezWillsToro2016}. Their structure is richer than that of ordinary groups in
two essential ways: they carry two products, \(\vdash\) and \(\dashv\), linked by mixed
associativity identities, and they may have several bar-units rather than a single one. A
basic structural theorem shows that every generalized digroup is realized in product form
\(G\times E\), where \(G\) is a group acting on the halo \(E\) \cite{MarinGaviria2025}. This
product description is not merely a convenient model; it is the general form of the theory.

The representation theory of generalized digroups reflects this two-layered structure. A
representation consists of two operator families, \(\lambda\) and \(\rho\), encoding the two
products and interacting through compatibility relations. Once one passes to the structural form
\(D\simeq G\times E\), the operators on the \(\rho\)-side depend only on the group component. Thus
the \(\rho\)-side retains a genuinely group-theoretic character and still admits a Maschke-type
averaging argument. The \(\lambda\)-side, by contrast, remains sensitive to the halo and carries
the extra data that distinguishes generalized digroups from groups.

This already suggests the form of the answer. One should not expect a full Maschke theorem in the
category of generalized digroup representations. What survives is a partial splitting phenomenon:
the group-theoretic side still behaves semisimply, while the halo side supports the obstruction to
full splitting. The main purpose of the paper is to isolate that obstruction and describe it
cohomologically \cite[Chapter~III]{Brown1982}\cite[Chapter~2]{Weibel1994}.

Our first main result is categorical. To every generalized digroup \(D\) we associate an
associative enveloping algebra \(A_D\) and prove that the category \(\Rep(D)\) of representations
of \(D\) is equivalent to the category of left \(A_D\)-modules. This places the theory in an
abelian framework with enough injectives and projectives, and allows extension groups to be treated
by standard homological methods \cite[Chapter~2]{Weibel1994}\cite[Chapter~IV]{HiltonStammbach1997}.

Our second main result is a Maschke-type splitting theorem on the \(\rho\)-side. Under the natural
hypothesis that the group component \(G\) is finite and \(\operatorname{char}(\kk)=0\), every short
exact sequence in \(\Rep(D)\) admits a \(\rho\)-equivariant splitting. The obstruction to a full
splitting in \(\Rep(D)\) is then concentrated entirely in the off-diagonal behaviour of the
\(\lambda\)-operators. We show that this obstruction is encoded by an explicit family of cocycles,
which leads to the concrete identification
\[
\Ext^1_{\Rep(D)}(Q,W)\cong Z^1(Q,W)/B^1(Q,W)
\]
in the sense of extension classes \cite{Yoneda1960}\cite[Chapter~2]{Weibel1994}.

Our third main contribution is structural. We construct explicit nonsplit extensions, showing that
\(\Rep(D)\) need not be semisimple even when the group component lies in a classical Maschke
regime. We then derive a spectral sequence separating the group-theoretic and halo-theoretic
contributions to cohomology. Moreover, the forgetful functor to the halo algebra
\(B_E\) preserves injective objects, so the \(E_2\)-page is expressed in terms of
group cohomology with coefficients in \(\Ext_{B_E}^q(Q,W)\). In the Maschke regime,
this spectral sequence collapses and yields criteria for splitting and for
non-semisimplicity.

The guiding theme is therefore clear. Averaging explains why the group side splits. What remains
after averaging is a genuinely halo-theoretic obstruction, invisible in the classical group case
and intrinsic to generalized digroups. The cocycle model for \(\Ext^1\) and the spectral sequence
developed here provide a homological description of that residual obstruction.

The paper is organized as follows. In Section~\ref{sec:preliminaries} we recall the structural
theory of generalized digroups and fix the corresponding notion of representation. In
Section~\ref{sec:enveloping} we define the enveloping algebra \(A_D\) and prove the equivalence
\(\Rep(D)\simeq A_D\text{-}\mathsf{Mod}\). In Section~\ref{sec:cocycle} we establish the
Maschke-type splitting on the \(\rho\)-side and obtain the cocycle description of
\(\Ext^1_{\Rep(D)}(Q,W)\). Section~\ref{sec:examples} provides explicit nonsplit extensions.
Finally, in Section~\ref{sec:spectral} we derive the spectral sequence and its consequences for
splitting and non-semisimplicity.

	\section{Structural preliminaries on generalized digroups}
	\label{sec:preliminaries}
	
	In this section we recall the structural features of generalized digroups that will be used
	throughout the paper. Our aim is twofold. First, we fix the product model \(G\times E\) that
	underlies the classical structure theory and that will later allow us to separate the group side
	from the halo side. Second, we introduce the notion of representation in the form needed for the
	homological developments of the following sections. Although most of the material recalled here is
	structural, it plays a decisive role in the later construction of the enveloping algebra and in the
	cohomological analysis of extension classes.
	
	\subsection{Generalized digroups and their structural form}
	
	We begin by recalling the classical notion of a generalized digroup \cite[Definition 5]{SalazarDiazVelasquezWillsToro2016}.
	
	\begin{definition}\label{def:gd}
		A \emph{generalized digroup} is a quadruple
		\[
		(D,\vdash,\dashv,E),
		\]
		where \(D\) is a nonempty set, \(\vdash,\dashv : D\times D\to D\) are two associative
		binary operations, and \(E\subseteq D\) is a nonempty subset, called the \emph{halo}
		or the set of \emph{bar-units}, such that the following conditions hold:
		\begin{enumerate}[label=\emph{(GD\arabic*)}]
			\item For every \(e\in E\) and every \(x\in D\),
			\[
			x\dashv e=x,
			\qquad
			e\vdash x=x.
			\]
			
			\item For every \(e\in E\) and every \(x\in D\), there exist elements
			\[
			x^{-1}_{l_e},\,x^{-1}_{r_e}\in D
			\]
			such that
			\[
			x^{-1}_{l_e}\dashv x=e,
			\qquad
			x\vdash x^{-1}_{r_e}=e.
			\]
			
			\item For all \(x,y,z\in D\),
			\[
			x\vdash (y\dashv z)=(x\vdash y)\dashv z,
			\]
			and
			\[
			x\dashv (y\dashv z)=x\dashv (y\vdash z),
			\qquad
			(x\vdash y)\vdash z=(x\dashv y)\vdash z.
			\]
		\end{enumerate}
	\end{definition}
	
	\begin{remark}
		The halo \(E\) need not be a singleton. Thus a generalized digroup differs from a group not only
		because it is governed by two products instead of one, but also because it may carry several
		bar-units. This extra geometric direction is one of the basic algebraic sources of the extension
		phenomena studied later.
	\end{remark}
	
	For a fixed \(e\in E\), the left and right inverses with respect to \(e\) organize
	distinguished group parts of the generalized digroup.
	
	\begin{definition}\label{def:left-right-groups}
		Let \(D=(D,\vdash,\dashv,E)\) be a generalized digroup and let \(e\in E\).
		Define
		\[
		\Gl{e}:=\{x^{-1}_{l_e}\mid x\in D\},
		\qquad
		\Gr{e}:=\{x^{-1}_{r_e}\mid x\in D\}.
		\]
		These are called, respectively, the \emph{left group at \(e\)} and the
		\emph{right group at \(e\)}.
	\end{definition}
	
	\begin{remark}
		The notation above is standard in the structure theory of generalized digroups.
		For each fixed bar-unit \(e\), the sets \(\Gl{e}\) and \(\Gr{e}\) carry natural group
		structures induced by \(\dashv\) and \(\vdash\), respectively (\cite[Theorem 2]{SalazarDiazVelasquezWillsToro2016}). In the present article
		the right component will be the one relevant for Maschke-type arguments, whereas the
		structural decomposition is naturally expressed using the left group.
	\end{remark}
	
	A key fact in the classical theory is that generalized digroups admit a product-type
	description with an action on the halo. Since this theorem is structural and will be
	used repeatedly later, we record it explicitly (\cite[Theorem 4 and 5]{SalazarDiazVelasquezWillsToro2016}).
	
	\begin{theorem}[Structure theorem]\label{thm:structure}
		Let \(D=(D,\vdash,\dashv,E)\) be a generalized digroup and let \(e\in E\).
		Then there exists a natural left action
		\[
		\Gl{e}\times E\to E,
		\qquad
		(g,\alpha)\mapsto g\bullet \alpha,
		\]
		such that \(D\) is isomorphic, as a generalized digroup, to
		\[
		\Gl{e}\times E
		\]
		endowed with the operations
		\begin{equation}\label{eq:structural-ops}
			(g,\alpha)\vdash(h,\beta)=(gh,\,g\bullet \beta),
			\qquad
			(g,\alpha)\dashv(h,\beta)=(gh,\,\alpha).
		\end{equation}
		In particular, every generalized digroup is, up to isomorphism, of this form.
	\end{theorem}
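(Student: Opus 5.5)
The plan is to build the isomorphism explicitly from the axioms. Fix \(e\in E\) and write \(G=\Gl{e}\) with the group law \(\dashv\) (identity \(e\)). First we record the basic identities forced by (GD1)--(GD3). Mixed associativity together with \(x\dashv e=x\) and \(e\vdash x=x\) gives
\[
x\dashv (y\vdash z)=x\dashv(y\dashv z),\qquad (x\dashv y)\vdash z=(x\vdash y)\vdash z .
\]
From these we want the key facts: \(\alpha\dashv x=\alpha\dashv \tilde x\) and \(x\vdash\alpha'\) behave well for \(\alpha,\alpha'\in E\). Each element \(x\) determines a ``group part'' \(\bar x:=e\dashv x\) and a ``halo part'' \(\pi(x):=x\vdash x^{-1}_{r_e}\). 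Before using these, we must check that they do not depend on choices of inverses. We also need that \(\bar x\in G\), that \(\pi(x)\in E\), and that \(\bar x\dashv \bar y=e\dashv x\dashv y\).

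The candidate isomorphism is \(\Phi:D\to G\times E\), \(\Phi(x)=(e\dashv x,\ \pi(x))\). Its inverse is \((g,\alpha)\mapsto \alpha\vdash g\), or possibly \(\alpha\dashv g\); the correct choice is fixed by testing on \(x\in E\), where \(\Phi(x)\) must be \((e,x)\). The action is defined by
\[
g\bullet\alpha:=\pi(g\vdash\alpha),
\]
equivalently \(g\vdash\alpha\vdash g^{-1}\) with the inverse taken in \(G\). We then verify three things in turn:
\begin{enumerate}
\item \(g\bullet\alpha\in E\), meaning it is a bar-unit, which we check via (GD1) using mixed associativity;
\item \(e\bullet\alpha=\alpha\) and \((gh)\bullet\alpha=g\bullet(h\bullet\alpha)\);
\item \(\Phi\) is bijective.
\end{enumerate}
For bijectivity, we show that every \(x\) decomposes uniquely as \(x=\pi(x)\vdash \bar x\). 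This uses \(\pi(x)\vdash \bar x=\pi(x)\vdash e\dashv x=(\pi(x)\vdash e)\dashv x\) and then a cancellation argument.

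Finally we check the two product formulas \eqref{eq:structural-ops}. The \(\dashv\) formula should be nearly immediate: the halo part of \(x\dashv y\) equals that of \(x\), because right multiplication by \(\dashv\) does not move the bar-unit. The \(\vdash\) formula says \(\pi(x\vdash y)=\bar x\bullet\pi(y)\). It follows by writing \(y=\pi(y)\vdash\bar y\) and using associativity of \(\vdash\).

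The main obstacle is the well-definedness and unit bookkeeping, specifically showing that \(\pi(x)\) is a genuine bar-unit independent of choices. Here I would first establish the lemma that the set of bar-units coincides with the set of \(\vdash\)-idempotent elements \(\alpha\) for which \(x\dashv\alpha=x\) holds for all \(x\). I would then derive everything else from that characterization, as in \cite[Theorems 4 and 5]{SalazarDiazVelasquezWillsToro2016}.
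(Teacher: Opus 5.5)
The paper does not prove this statement (it is imported from the structure theory of generalized digroups), so your proposal has to stand on its own. Your architecture is the right one: a group coordinate \(\bar x=e\dashv x\), a halo coordinate, an action read off from the halo coordinate of \(g\vdash\alpha\), and then the two product formulas. However, every formula you give for extracting or reinserting the halo coordinate uses \(\vdash\) where \(\dashv\) is required, and each one is collapsed by the axioms.

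\begin{itemize}
\item Your \(\pi(x):=x\vdash x^{-1}_{r_e}\) equals \(e\) for every \(x\) by (GD2). So \(\Phi(x)=(e\dashv x,e)\) is not injective once \(E\) has two elements, which is exactly the non-classical case. The ``independence of choices'' you flag as the main obstacle is vacuous for this \(\pi\), which should have been a warning sign.
\item Since \(\alpha\vdash z=z\) for \(\alpha\in E\) by (GD1), the decomposition \(x=\pi(x)\vdash\bar x\) would assert \(x=e\dashv x\) for all \(x\). Your own computation \((\pi(x)\vdash e)\dashv x\) returns \(e\dashv x\), and no cancellation changes that.
\item For the same reason, the candidate inverse \((g,\alpha)\mapsto\alpha\vdash g\) is just \(g\).
\item \(g\vdash\alpha\vdash g^{-1}=g\vdash g^{-1}\) does not depend on \(\alpha\), so it defines no action.
\item The later step ``write \(y=\pi(y)\vdash\bar y\)'' inherits the same defect.
\end{itemize}

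The repair is to use \(\dashv\) on the halo side throughout:
\begin{itemize}
\item \(\pi(x):=x\dashv x^{-1}_{l_e}\);
\item \(x=\pi(x)\dashv\bar x\), which is immediate from \(x^{-1}_{l_e}\dashv x=e\) and \(y\dashv e=y\);
\item the inverse map \((g,\alpha)\mapsto\alpha\dashv g\);
\item the action \(g\bullet\alpha:=\pi(g\vdash\alpha)=g\vdash\alpha\dashv g^{-1}\).
\end{itemize}

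The facts that actually carry the proof are these.
\begin{itemize}
\item \((e\dashv D,\dashv)\) is a monoid with identity \(e\) in which every element has a left inverse. Hence it is a group, and it coincides with \(\Gl{e}\).
\item \(E=\{u\in D\mid e\dashv u=e\}\). This requires \(E\) to be the set of all bar-units. (GD1)--(GD3) do not force this, and without it the statement is false, so make it an explicit hypothesis.
\item \(e\dashv(x\vdash y)=e\dashv(x\dashv y)=\bar x\dashv\bar y\), by (GD3).
\item \(x\dashv z=x\dashv\bar z\).
\item \(x\vdash z=\bar x\vdash z\), since \((e\dashv x)\vdash z=(e\vdash x)\vdash z\).
\end{itemize}
With these in hand, \(\pi(x\dashv y)=\pi(x)\) is immediate. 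The \(\vdash\)-formula follows from \(x\vdash y=(x\vdash\pi(y))\dashv\bar y\), that is, from the mixed identity \(x\vdash(y\dashv z)=(x\vdash y)\dashv z\), not from associativity of \(\vdash\).
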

	
	\begin{remark}
		Theorem~\ref{thm:structure} shows that product constructions are not merely illustrative examples.
		They constitute the general structural model of the classical theory. Accordingly, later arguments
		may be developed in the product picture without loss of generality.
	\end{remark}
	
	We next isolate the product construction appearing in Theorem~\ref{thm:structure}.
	
	\begin{construction}\label{cons:product-model}
		Let \(G\) be a group with identity \(1\), let \(E\) be a nonempty set, and let
		\[
		G\times E\to E,\qquad (g,\alpha)\mapsto g\bullet \alpha
		\]
		be a left action of \(G\) on \(E\).
		Define two binary operations on \(G\times E\) by
		\begin{equation}\label{eq:product-ops}
			(g,\alpha)\vdash(h,\beta):=(gh,\,g\bullet \beta),
			\qquad
			(g,\alpha)\dashv(h,\beta):=(gh,\,\alpha),
		\end{equation}
		for all \((g,\alpha),(h,\beta)\in G\times E\). Define also
		\[
		\overline{E}:=\{(1,\alpha)\mid \alpha\in E\}\subseteq G\times E.
		\]
	\end{construction}
	
	\begin{proposition}\label{prop:product-gd}
		Let \(G\) be a group acting on a nonempty set \(E\), and endow \(G\times E\) with the
		operations
		\[
		(g,\alpha)\vdash(h,\beta)=(gh,\,g\bullet\beta),
		\qquad
		(g,\alpha)\dashv(h,\beta)=(gh,\,\alpha),
		\]
		together with the halo
		\[
		\overline{E}:=\{(1,\alpha)\mid \alpha\in E\}.
		\]
		Then \((G\times E,\vdash,\dashv,\overline{E})\) is a generalized digroup.
	\end{proposition}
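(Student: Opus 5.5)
The plan is to verify the axioms (GD1)--(GD3) of Definition~\ref{def:gd} directly from the formulas in \eqref{eq:product-ops}, after first checking that both operations are associative. Throughout we use only the group axioms of \(G\) and the two action identities \(1\bullet\alpha=\alpha\) and \(g\bullet(h\bullet\alpha)=(gh)\bullet\alpha\). The first coordinate is always the group product \(gh\) in both operations. Hence every identity reduces to associativity in \(G\) for that coordinate, and only the second coordinate requires attention.

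\emph{Associativity.} For \(\dashv\), both bracketings of \((g,\alpha)\dashv(h,\beta)\dashv(k,\gamma)\) have second coordinate \(\alpha\). For \(\vdash\), the bracketing \(((g,\alpha)\vdash(h,\beta))\vdash(k,\gamma)\) has second coordinate \((gh)\bullet\gamma\). The other bracketing \((g,\alpha)\vdash((h,\beta)\vdash(k,\gamma))\) has second coordinate \(g\bullet(h\bullet\gamma)\). These agree by the action axiom.

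\emph{(GD1).} Take \(e=(1,\epsilon)\in\overline{E}\). Then \((g,\alpha)\dashv(1,\epsilon)=(g,\alpha)\). Also \((1,\epsilon)\vdash(g,\alpha)=(g,1\bullet\alpha)=(g,\alpha)\).

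\emph{(GD2).} Given \(x=(g,\alpha)\) and \(e=(1,\epsilon)\), set \(x^{-1}_{l_e}:=(g^{-1},\epsilon)\). Then \((g^{-1},\epsilon)\dashv(g,\alpha)=(1,\epsilon)=e\). For the right inverse we need \((g,\alpha)\vdash(g^{-1},\beta)=(1,g\bullet\beta)=(1,\epsilon)\). We therefore choose \(\beta=g^{-1}\bullet\epsilon\) and set \(x^{-1}_{r_e}:=(g^{-1},g^{-1}\bullet\epsilon)\). The identity \(g\bullet(g^{-1}\bullet\epsilon)=\epsilon\) is the only point where the invertibility of the action is used.

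\emph{(GD3).} We compare second coordinates in each of the three identities.
\begin{itemize}
\item \(x\vdash(y\dashv z)\) versus \((x\vdash y)\dashv z\): both sides give \(g\bullet\beta\).
\item \(x\dashv(y\dashv z)\) versus \(x\dashv(y\vdash z)\): both sides give \(\alpha\).
\item \((x\vdash y)\vdash z\) versus \((x\dashv y)\vdash z\): both sides give \((gh)\bullet\gamma\).
\end{itemize}
The point of the last identity is that the left operand of the outer \(\vdash\) contributes only its group coordinate \(gh\), and this coordinate is the same for \(x\vdash y\) and \(x\dashv y\).

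There is no genuine obstacle here. The only step needing a choice is the right inverse in (GD2), where the halo coordinate must be pulled back along the action. Every other verification is a one-line comparison.
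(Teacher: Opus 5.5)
Your proposal is correct and follows the same route as the paper: you verify associativity, the bar-unit property of each \((1,\epsilon)\), and the mixed identities of (GD3) coordinatewise, and you use exactly the paper's inverses \((g^{-1},\epsilon)\) and \((g^{-1},g^{-1}\bullet\epsilon)\) relative to \((1,\epsilon)\). Your write-up simply spells out the computations that the paper only sketches and cites to the literature.
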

	
	\begin{proof}
		This is standard in the structure theory of generalized digroups. The verification of the
		axioms follows by a direct computation using associativity in \(G\) and the action identity
		\[
		(gh)\bullet\gamma=g\bullet(h\bullet\gamma)
		\qquad
		(g,h\in G,\ \gamma\in E).
		\]
		For completeness, one checks that \(\vdash\) and \(\dashv\) are associative, that every
		\((1,\alpha)\in\overline{E}\) is a bar-unit, and that the left and right inverses relative to
		\((1,\beta)\) are given by
		\[
		(g,\alpha)^{-1}_{l_{(1,\beta)}}=(g^{-1},\beta),
		\qquad
		(g,\alpha)^{-1}_{r_{(1,\beta)}}=(g^{-1},g^{-1}\bullet\beta).
		\]
		The mixed identities are immediate from the definitions of \(\vdash\) and \(\dashv\).
		See, for instance, \cite[Proposition 7]{SalazarDiazVelasquezWillsToro2016}.
	\end{proof}
	
	\begin{remark}\label{rem:right-group-product}
		For every \(\alpha\in E\), the right group at the bar-unit \((1,\alpha)\) is naturally identified
		with \(G\). Indeed, by the proof above,
		\[
		(g,\xi)^{-1}_{r_{(1,\alpha)}}=(g^{-1},\,g^{-1}\bullet\alpha),
		\]
		and therefore
		\[
		\Gr{(1,\alpha)}
		=
		\{(g^{-1},\,g^{-1}\bullet\alpha)\mid g\in G\}.
		\]
		The map
		\[
		G\longrightarrow \Gr{(1,\alpha)},
		\qquad
		g\longmapsto (g^{-1},\,g^{-1}\bullet\alpha),
		\]
		is bijective, and it transports the group structure of \(G\) to the group structure of
		\(\Gr{(1,\alpha)}\) induced by \(\vdash\). This is the group component to which the classical
		Maschke theorem will later be applied.
	\end{remark}
	
\subsection{Representations of generalized digroups}

We now fix the representation-theoretic language used throughout the article \cite[Definition 6]{MarinGaviria2025}.

\begin{definition}\label{def:representation}
	Let \(D=(D,\vdash,\dashv,E)\) be a generalized digroup. A \emph{representation} of \(D\)
	over \(\kk\) is a triple
	\[
	(V,\lambda,\rho),
	\]
	where \(V\) is a \(\kk\)-vector space,
	\[
	\lambda:D\to \End(V),
	\qquad
	\rho:D\to \Aut(V),
	\]
	are maps satisfying, for all \(x,y\in D\) and all \(e\in E\),
	\[
	\begin{alignedat}{3}
		\lambda_{x\dashv y} &= \lambda_x\lambda_y, \qquad &
		\rho_{x\vdash y} &= \rho_x\rho_y, \qquad &
		\rho_e &= \id_V,\\
		\rho_x\lambda_y &= \lambda_{x\vdash y}, \qquad &
		\lambda_x\rho_y &= \lambda_{x\dashv y}. &&
	\end{alignedat}
	\]
\end{definition}

\begin{remark}
	With this convention, the \(\rho\)-side reflects the group component in the usual order.
	In particular, once one passes to the structural form \(D\simeq G\times E\) and proves that
	\(\rho_{(g,\alpha)}\) depends only on \(g\), the resulting map
	\[
	g\longmapsto \rho_g
	\]
	is a genuine representation of the group \(G\).
\end{remark}

\begin{remark}
	The asymmetry between \(\lambda\) and \(\rho\) is structural. The operators \(\rho_x\) are
	required to be invertible, whereas no invertibility is assumed for \(\lambda_x\). This is the
	mechanism that later permits triangular and nilpotent behavior to coexist with semisimplicity
	on the group side.
\end{remark}

\begin{definition}\label{def:morphisms}
	Let \((V,\lambda,\rho)\) and \((W,\lambda',\rho')\) be representations of \(D\).
	A \emph{morphism of representations}
	\[
	f:(V,\lambda,\rho)\longrightarrow (W,\lambda',\rho')
	\]
	is a \(\kk\)-linear map \(f:V\to W\) such that, for every \(x\in D\),
	\[
	f\lambda_x=\lambda'_x f,
	\qquad
	f\rho_x=\rho'_x f.
	\]
\end{definition}

\begin{proposition}\label{prop:rep-category}
	Generalized digroup representations over \(\kk\), together with their morphisms,
	form a category, which we denote by \(\Rep(D)\).
\end{proposition}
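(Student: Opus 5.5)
To prove Proposition~\ref{prop:rep-category} we check the category axioms directly: objects are the triples \((V,\lambda,\rho)\) of Definition~\ref{def:representation}, morphisms are as in Definition~\ref{def:morphisms}, and composition is ordinary composition of \(\kk\)-linear maps. Two facts must be verified. First, the identity map \(\id_V\) is a morphism \((V,\lambda,\rho)\to(V,\lambda,\rho)\). This is immediate, since \(\id_V\lambda_x=\lambda_x\id_V\) and \(\id_V\rho_x=\rho_x\id_V\) for every \(x\in D\).

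Second, the composite of two morphisms is again a morphism. Let \(f:(U,\lambda,\rho)\to(V,\lambda',\rho')\) and \(g:(V,\lambda',\rho')\to(W,\lambda'',\rho'')\) be morphisms. For each \(x\in D\) we compute
\[
(gf)\lambda_x=g\lambda'_x f=\lambda''_x(gf),
\qquad
(gf)\rho_x=g\rho'_x f=\rho''_x(gf).
\]
Each equality uses the intertwining relation for one of the two maps. Hence \(gf\) is a morphism of representations.

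The remaining axioms follow because composition in \(\Rep(D)\) is literally composition of linear maps. Associativity of composition and the unit laws \(\id_W f=f=f\id_V\) therefore hold already in the category of \(\kk\)-vector spaces. Since morphisms are determined by their underlying linear maps, the same identities hold in \(\Rep(D)\). In other words, \(\Rep(D)\) is a (non-full) subcategory of \(\kk\)-vector spaces, with the forgetful functor faithful by construction.

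None of these steps poses a real obstacle. The only point to watch is that the defining relations of Definition~\ref{def:representation} (multiplicativity, \(\rho_e=\id_V\), the mixed relations, and invertibility of \(\rho_x\)) play no role in the closure argument. They are conditions on objects, not on morphisms, and the morphism condition is a pure commutation condition that is automatically stable under composition. If one wants Hom-sets to be genuine sets, one can restrict to a fixed universe of vector spaces, as is standard.
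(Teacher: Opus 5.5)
Your proof is correct and follows the paper's route: identities are morphisms, composites of intertwiners are intertwiners, and associativity and the unit laws are inherited from \(\kk\)-vector spaces. One wording point: \(\Rep(D)\) is not literally a subcategory of vector spaces, since distinct representations can share the same underlying space, so the accurate formulation is your remark that the forgetful functor is faithful.
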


\begin{proof}
	Let \((V,\lambda,\rho)\) be a representation of \(D\). The identity map \(\id_V\) is
	clearly a morphism, because
	\[
	\id_V\lambda_x=\lambda_x=\lambda_x\id_V,
	\qquad
	\id_V\rho_x=\rho_x=\rho_x\id_V
	\]
	for every \(x\in D\).
	
	Now let
	\[
	f:(V,\lambda,\rho)\to(W,\lambda',\rho'),
	\qquad
	g:(W,\lambda',\rho')\to(U,\lambda'',\rho'')
	\]
	be morphisms of representations. Then \(g\circ f\) is \(\kk\)-linear and, for every \(x\in D\),
	\begin{align*}
		(g\circ f)\lambda_x
		&=g(f\lambda_x)
		=g(\lambda'_x f)
		=(g\lambda'_x)f
		=(\lambda''_x g)f
		=\lambda''_x(g\circ f),
	\end{align*}
	and similarly
	\begin{align*}
		(g\circ f)\rho_x
		&=g(f\rho_x)
		=g(\rho'_x f)
		=(g\rho'_x)f
		=(\rho''_x g)f
		=\rho''_x(g\circ f).
	\end{align*}
	Thus \(g\circ f\) is again a morphism of representations.
	
	Associativity of composition and the identity laws are inherited from the category of
	\(\kk\)-vector spaces. Therefore \(\Rep(D)\) is a category.
\end{proof}

\begin{remark}
	Section~\ref{sec:preliminaries} fixes the structural and representation-theoretic language
	used in the remainder of the paper. In the next section we replace the category \(\Rep(D)\)
	by a more algebraically tractable model, namely the category of modules over an associative
	enveloping algebra.
\end{remark}
	
	\section{The enveloping algebra and the category of representations}
	\label{sec:enveloping}
	
	The purpose of this section is to place the representation theory of a generalized digroup
	inside a standard homological framework. We do this by attaching to every generalized digroup
	\(D\) an associative algebra \(A_D\) whose module category is equivalent to \(\Rep(D)\).
	
	\subsection{The enveloping algebra}
	
	Let \(D=(D,\vdash,\dashv,E)\) be a generalized digroup. Consider two formal copies of the set \(D\),
	denoted by
	\[
	\{L_x\mid x\in D\},
	\qquad
	\{R_x\mid x\in D\}.
	\]
	Let \(W_D\) be the free \(\kk\)-vector space with basis
	\[
	\mathcal{X}_D:=\{L_x,R_x\mid x\in D\}.
	\]
	We denote by
	\[
	T(W_D)=\bigoplus_{n\ge 0} W_D^{\otimes n}
	\]
	the tensor algebra of \(W_D\). Thus \(T(W_D)\) is the free unital associative \(\kk\)-algebra
	generated by the symbols \(L_x\) and \(R_x\), \(x\in D\).
	
	\begin{definition}\label{def:AD}
		Let \(I_D\) be the two-sided ideal of \(T(W_D)\) generated by the elements
		\begin{align}
			L_{x\dashv y}-L_xL_y, \label{eq:AD1}\\
			R_{x\vdash y}-R_xR_y, \label{eq:AD2}\\
			R_e-1 \qquad (e\in E), \label{eq:AD3}\\
			R_xL_y-L_{x\vdash y}, \label{eq:AD4}\\
			L_xR_y-L_{x\dashv y}, \label{eq:AD5}
		\end{align}
		for all \(x,y\in D\). The \emph{enveloping algebra} of \(D\) is the quotient algebra
		\[
		A_D:=T(W_D)/I_D.
		\]
	\end{definition}
	
	\begin{remark}
		Relations \eqref{eq:AD1}--\eqref{eq:AD5} are the formal shadows of the axioms of a
		representation. The symbols \(L_x\) encode the operators \(\lambda_x\), while the symbols
		\(R_x\) encode the operators \(\rho_x\). Accordingly, the algebra \(A_D\) is designed so that
		a representation of \(D\) is exactly the same thing as a left \(A_D\)-module.
	\end{remark}
	
	\subsection{Universal property}
	
	The construction above is best understood through its universal property.
	
	\begin{proposition}\label{prop:universal}
		Let \(B\) be a unital associative \(\kk\)-algebra. Then to give a unital algebra morphism
		\[
		\Phi:A_D\longrightarrow B
		\]
		is equivalent to giving two families
		\[
		(\ell_x)_{x\in D}\subseteq B,
		\qquad
		(r_x)_{x\in D}\subseteq B
		\]
		such that, for all \(x,y\in D\) and all \(e\in E\),
		\begin{align}
			\ell_{x\dashv y}&=\ell_x\ell_y, \label{eq:UP1}\\
			r_{x\vdash y}&=r_xr_y, \label{eq:UP2}\\
			r_e&=1, \label{eq:UP3}\\
			r_x\ell_y&=\ell_{x\vdash y}, \label{eq:UP4}\\
			\ell_xr_y&=\ell_{x\dashv y}. \label{eq:UP5}
		\end{align}
	\end{proposition}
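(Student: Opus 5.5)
The plan is to combine the universal property of the tensor algebra with that of the quotient by a two-sided ideal. First, recall that \(T(W_D)\) is the free unital associative \(\kk\)-algebra on the basis \(\mathcal{X}_D\). Hence unital algebra morphisms \(\widetilde\Phi:T(W_D)\to B\) correspond bijectively to arbitrary set maps \(\mathcal{X}_D\to B\). Equivalently, they correspond to arbitrary pairs of families \((\ell_x)_{x\in D}\), \((r_x)_{x\in D}\) in \(B\), via \(\ell_x=\widetilde\Phi(L_x)\) and \(r_x=\widetilde\Phi(R_x)\). Concretely, one first extends linearly to \(W_D\to B\) and then uses the universal property of the tensor algebra.

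Second, use the standard fact that unital morphisms \(\Phi:A_D=T(W_D)/I_D\to B\) correspond bijectively, via \(\Phi\mapsto \Phi\circ\pi\) with \(\pi\) the quotient map, to unital morphisms \(\widetilde\Phi:T(W_D)\to B\) satisfying \(I_D\subseteq\ker\widetilde\Phi\). Since \(\ker\widetilde\Phi\) is a two-sided ideal, it contains \(I_D\) if and only if it contains the generators \eqref{eq:AD1}--\eqref{eq:AD5}. Applying \(\widetilde\Phi\) to each generator, and using multiplicativity and \(\widetilde\Phi(1)=1\), turns these vanishing conditions into precisely \eqref{eq:UP1}--\eqref{eq:UP5}. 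For example, \(\widetilde\Phi(R_xL_y-L_{x\vdash y})=r_x\ell_y-\ell_{x\vdash y}\), and \(\widetilde\Phi(R_e-1)=r_e-1\).

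Composing the two bijections gives the claimed equivalence: \(\Phi\) corresponds to \((\Phi(\overline{L_x}),\Phi(\overline{R_x}))_{x\in D}\), and conversely each admissible pair of families determines a unique \(\Phi\) with \(\Phi(\overline{L_x})=\ell_x\) and \(\Phi(\overline{R_x})=r_x\). Uniqueness holds because the classes \(\overline{L_x}\) and \(\overline{R_x}\) generate \(A_D\) as a unital algebra.

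There is no genuine obstacle here. The only step needing care is the reduction from the whole ideal \(I_D\) to its generators. This uses that \(\ker\widetilde\Phi\) is a two-sided ideal, so it is closed under sums and under left and right multiplication by arbitrary elements of \(T(W_D)\), and therefore contains the ideal generated by any subset it contains.
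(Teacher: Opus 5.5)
Your proposal is correct and follows essentially the same route as the paper: extend via the universal property of the tensor algebra \(T(W_D)\), observe that the generators of \(I_D\) map to zero exactly when \eqref{eq:UP1}--\eqref{eq:UP5} hold, and factor through the quotient \(A_D\). Your explicit justifications (that \(\ker\widetilde\Phi\) is a two-sided ideal, so containing the generators suffices, and that uniqueness holds because the classes of \(L_x,R_x\) generate \(A_D\)) make precise what the paper leaves as ``inverse to each other by inspection.''
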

	
	\begin{proof}
		Assume first that \(\Phi:A_D\to B\) is a unital algebra morphism. Let
		\[
		\pi:T(W_D)\longrightarrow A_D
		\]
		be the quotient map, and define
		\[
		\ell_x:=\Phi(\pi(L_x)),
		\qquad
		r_x:=\Phi(\pi(R_x)).
		\]
		Since every generator of the ideal \(I_D\) maps to zero in \(A_D\), its image under \(\Phi\)
		must be zero in \(B\). Therefore
		\begin{align*}
			0
			&=\Phi\bigl(\pi(L_{x\dashv y}-L_xL_y)\bigr)
			=\ell_{x\dashv y}-\ell_x\ell_y,\\
			0
			&=\Phi\bigl(\pi(R_{x\vdash y}-R_xR_y)\bigr)
			=r_{x\vdash y}-r_xr_y,\\
			0
			&=\Phi\bigl(\pi(R_e-1)\bigr)
			=r_e-1,\\
			0
			&=\Phi\bigl(\pi(R_xL_y-L_{x\vdash y})\bigr)
			=r_x\ell_y-\ell_{x\vdash y},\\
			0
			&=\Phi\bigl(\pi(L_xR_y-L_{x\dashv y})\bigr)
			=\ell_xr_y-\ell_{x\dashv y}.
		\end{align*}
		Hence the families \((\ell_x)\) and \((r_x)\) satisfy \eqref{eq:UP1}--\eqref{eq:UP5}.
		
		Conversely, suppose that families \((\ell_x)\) and \((r_x)\) in \(B\) are given and satisfy
		\eqref{eq:UP1}--\eqref{eq:UP5}. Since \(T(W_D)\) is the tensor algebra of \(W_D\), the assignment
		\[
		L_x\longmapsto \ell_x,
		\qquad
		R_x\longmapsto r_x
		\]
		extends uniquely to a unital algebra morphism
		\[
		\widetilde{\Phi}:T(W_D)\longrightarrow B.
		\]
		We claim that \(I_D\subseteq \ker(\widetilde{\Phi})\). Indeed, each generator of \(I_D\) is sent to
		zero:
		\begin{align*}
			\widetilde{\Phi}(L_{x\dashv y}-L_xL_y)
			&=\ell_{x\dashv y}-\ell_x\ell_y
			=0,\\
			\widetilde{\Phi}(R_{x\vdash y}-R_xR_y)
			&=r_{x\vdash y}-r_xr_y
			=0,\\
			\widetilde{\Phi}(R_e-1)
			&=r_e-1
			=0,\\
			\widetilde{\Phi}(R_xL_y-L_{x\vdash y})
			&=r_x\ell_y-\ell_{x\vdash y}
			=0,\\
			\widetilde{\Phi}(L_xR_y-L_{x\dashv y})
			&=\ell_xr_y-\ell_{x\dashv y}
			=0.
		\end{align*}
		Therefore \(\widetilde{\Phi}\) factors uniquely through the quotient \(A_D\), yielding a unital
		algebra morphism
		\[
		\Phi:A_D\longrightarrow B.
		\]
		
		The two constructions are inverse to each other by inspection. This proves the claim.
	\end{proof}
	
	\subsection{A structural lemma}
	
	Before passing to the equivalence of categories, we isolate a simple lemma which guarantees that
	the operators \(R_x\) act invertibly on every \(A_D\)-module. This point is essential, because in
	Definition~\ref{def:representation} the map \(\rho\) takes values in \(\Aut(V)\), not merely in
	\(\End(V)\).
	
	\begin{lemma}\label{lem:sharp-element}
		Let \(D=(D,\vdash,\dashv,E)\) be a generalized digroup and let \(x\in D\). Then there exists an
		element \(x^\sharp\in D\) such that
		\[
		x\vdash x^\sharp\in E
		\qquad\text{and}\qquad
		x^\sharp\vdash x\in E.
		\]
	\end{lemma}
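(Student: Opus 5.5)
I would prove Lemma~\ref{lem:sharp-element} through the structure theorem rather than directly from the axioms, since in the product model the computation is explicit. By Theorem~\ref{thm:structure} we may fix \(e\in E\) and an isomorphism of generalized digroups \(D\cong \Gl{e}\times E\), carrying \(E\) onto \(\overline{E}=\{(1,\alpha)\}\) (an isomorphism of generalized digroups preserves the halo). Both conditions \(x\vdash x^\sharp\in E\) and \(x^\sharp\vdash x\in E\) are transported by such an isomorphism. Hence it suffices to treat \(D=G\times E\) with the operations of Construction~\ref{cons:product-model} and halo \(\overline{E}\).

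Given \(x=(g,\alpha)\), I would take \(x^\sharp:=(g^{-1},\beta)\) with \(\beta\in E\) arbitrary, for instance \(\beta=\alpha\). By \eqref{eq:product-ops},
\[
x\vdash x^\sharp=(gg^{-1},\,g\bullet\beta)=(1,\,g\bullet\beta)\in\overline{E},
\qquad
x^\sharp\vdash x=(g^{-1}g,\,g^{-1}\bullet\alpha)=(1,\,g^{-1}\bullet\alpha)\in\overline{E}.
\]
This completes the argument. Note that the first-coordinate computation uses only that \(G\) is a group. The second coordinate is automatically an element of \(E\), because \(\bullet\) is an action on \(E\).

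The only delicate point is the transport step: I need that the isomorphism in Theorem~\ref{thm:structure} maps the halo \(E\) onto \(\overline{E}\). I would justify this from the definition of isomorphism of generalized digroups, where the bar-unit set is part of the data. Alternatively, bar-units are intrinsically characterized by axiom (GD1) together with (GD2), so the halo is preserved either way.

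As a fallback that avoids the structure theorem, I would try \(x^\sharp:=x^{-1}_{r_e}\), which gives \(x\vdash x^\sharp=e\) by (GD2). I would then show \(x^\sharp\vdash x\in E\) using the mixed identities (GD3) and the right-group structure of \(\Gr{e}\). That direct verification is messier, which is why I would prefer the product-model route.
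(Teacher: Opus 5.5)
Your argument is correct and is essentially the paper's: both pass to the product model via Theorem~\ref{thm:structure} and invert the group coordinate. The only difference is that the paper fixes the second coordinate as \(g^{-1}\bullet\alpha\) (so that \(x\vdash x^\sharp=(1,\alpha)\)), whereas you observe that any \(\beta\in E\) works.

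For the transport step, your primary justification (the isomorphism preserves the halo as part of the data) is the right one. The alternative claim that bar-units are intrinsically characterized by (GD1)--(GD2) holds only if \(E\) is read as the set of \emph{all} bar-units, since the axioms of Definition~\ref{def:gd} remain satisfied when the halo of a product model is shrunk to a nonempty proper subset of \(\overline{E}\).
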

	
	\begin{proof}
		By Theorem~\ref{thm:structure}, the generalized digroup \(D\) is isomorphic to a structural model
		\[
		G\times E
		\]
		with operations
		\[
		(g,\alpha)\vdash(h,\beta)=(gh,\,g\bullet\beta),
		\qquad
		(g,\alpha)\dashv(h,\beta)=(gh,\,\alpha).
		\]
		Write the image of \(x\) under such an isomorphism as \((g,\alpha)\). Define
		\[
		(g,\alpha)^\sharp := (g^{-1},g^{-1}\bullet\alpha).
		\]
		Then
		\[
		(g,\alpha)\vdash(g^{-1},g^{-1}\bullet\alpha)
		=
		(1,\,g\bullet(g^{-1}\bullet\alpha))
		=
		(1,\alpha)\in \overline{E},
		\]
		and also
		\[
		(g^{-1},g^{-1}\bullet\alpha)\vdash(g,\alpha)
		=
		(1,\,g^{-1}\bullet\alpha)\in \overline{E}.
		\]
		Transporting this element back through the isomorphism yields an element \(x^\sharp\in D\) such
		that
		\[
		x\vdash x^\sharp\in E
		\qquad\text{and}\qquad
		x^\sharp\vdash x\in E.
		\]
		This proves the lemma.
	\end{proof}
	
	\subsection{Equivalence with modules over $A_D$}
	
	We now prove the main result of the section.
	
	\begin{theorem}\label{thm:equivalence}
		There is an equivalence of categories
		\[
		\Rep(D)\simeq A_D\text{-}\mathsf{Mod},
		\]
		where \(A_D\text{-}\mathsf{Mod}\) denotes the category of left \(A_D\)-modules.
	\end{theorem}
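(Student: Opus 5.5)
We will construct two functors \(F:\Rep(D)\to A_D\text{-}\mathsf{Mod}\) and \(G:A_D\text{-}\mathsf{Mod}\to\Rep(D)\), both the identity on underlying vector spaces and on underlying linear maps. The goal is to show they are mutually inverse, which gives an isomorphism of categories and hence an equivalence.

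For \(F\), take a representation \((V,\lambda,\rho)\) and apply Proposition~\ref{prop:universal} with \(B=\End_\kk(V)\), \(\ell_x=\lambda_x\) and \(r_x=\rho_x\). Relations \eqref{eq:UP1}--\eqref{eq:UP5} are exactly the axioms of Definition~\ref{def:representation}. We therefore obtain a unital algebra morphism \(\Phi_V:A_D\to\End(V)\), that is, a left \(A_D\)-module structure on \(V\).

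Next we check morphisms. If \(f\) commutes with all \(\lambda_x\) and \(\rho_x\), then the set \(\{a\in A_D: f\Phi_V(a)=\Phi_W(a)f\}\) is a subalgebra containing the generators \(\pi(L_x)\) and \(\pi(R_x)\). Hence it is all of \(A_D\), so \(f\) is \(A_D\)-linear. Functoriality of \(F\) is then clear.

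For \(G\), take a left \(A_D\)-module \(M\), with structure map \(\Phi:A_D\to\End(M)\). Set \(\lambda_x=\Phi(\pi(L_x))\) and \(\rho_x=\Phi(\pi(R_x))\). By the first half of Proposition~\ref{prop:universal}, these satisfy all the identities of Definition~\ref{def:representation}. Morphisms of \(A_D\)-modules commute in particular with these operators, so \(G\) is well defined on morphisms.

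The main obstacle is that Definition~\ref{def:representation} demands \(\rho_x\in\Aut(M)\), not merely \(\rho_x\in\End(M)\). Here we will use Lemma~\ref{lem:sharp-element}: for each \(x\) it gives an element \(x^\sharp\) with \(x\vdash x^\sharp\in E\) and \(x^\sharp\vdash x\in E\). Combining \eqref{eq:AD2} and \eqref{eq:AD3}, we get
\[
\rho_x\rho_{x^\sharp}=\rho_{x\vdash x^\sharp}=\id_M,
\qquad
\rho_{x^\sharp}\rho_x=\rho_{x^\sharp\vdash x}=\id_M .
\]
So \(\rho_x\) is invertible with inverse \(\rho_{x^\sharp}\). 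This is the only point where the structure theorem (through the lemma) enters, and it is also why the defining relations do not need to include formal inverses.

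Finally, we check that \(G\circ F\) and \(F\circ G\) are identities. Starting from a representation, \(G(F(V))\) recovers \(\lambda_x=\Phi_V(\pi(L_x))\) and \(\rho_x=\Phi_V(\pi(R_x))\) by construction. Starting from a module, the module structure on \(F(G(M))\) agrees with \(\Phi\) on the generators \(\pi(L_x)\) and \(\pi(R_x)\). By the uniqueness part of Proposition~\ref{prop:universal}, or equivalently because \(A_D\) is generated by these elements, it agrees with \(\Phi\) everywhere. Both composites are the identity on morphisms as well. Hence \(F\) and \(G\) are inverse isomorphisms of categories, and in particular \(\Rep(D)\simeq A_D\text{-}\mathsf{Mod}\).
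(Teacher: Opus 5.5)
Your proposal is correct and follows essentially the same route as the paper. Both arguments obtain the module structure from Proposition~\ref{prop:universal} with $B=\End_\kk(V)$, use Lemma~\ref{lem:sharp-element} to show that $R_x$ acts invertibly with inverse $R_{x^\sharp}$, and check that the two constructions are mutually inverse. The one cosmetic difference is that you derive (R1)--(R5) for a module from the forward direction of Proposition~\ref{prop:universal}, where the paper verifies them one by one; your version is equally valid.
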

	
	\begin{proof}
		We construct functors in both directions and prove that they are quasi-inverse.
		
		\smallskip
		\noindent
		\textbf{Step 1: from representations to modules.}
		Let \((V,\lambda,\rho)\) be a representation of \(D\). Set
		\[
		\ell_x:=\lambda_x\in\End_\kk(V),
		\qquad
		r_x:=\rho_x\in\End_\kk(V)
		\]
		for every \(x\in D\). Since \((V,\lambda,\rho)\) satisfies the axioms of
		Definition~\ref{def:representation}, we have
		\begin{align*}
			\ell_{x\dashv y}
			&=\lambda_{x\dashv y}
			=\lambda_x\lambda_y
			=\ell_x\ell_y,\\
			r_{x\vdash y}
			&=\rho_{x\vdash y}
			=\rho_x\rho_y
			=r_xr_y,\\
			r_e
			&=\rho_e
			=\id_V,\\
			r_x\ell_y
			&=\rho_x\lambda_y
			=\lambda_{x\vdash y}
			=\ell_{x\vdash y},\\
			\ell_xr_y
			&=\lambda_x\rho_y
			=\lambda_{x\dashv y}
			=\ell_{x\dashv y}.
		\end{align*}
		Thus Proposition~\ref{prop:universal} applies with \(B=\End_\kk(V)\), and yields a unique
		unital algebra morphism
		\[
		\Phi_{(\lambda,\rho)}:A_D\longrightarrow \End_\kk(V)
		\]
		such that
		\[
		\Phi_{(\lambda,\rho)}(L_x)=\lambda_x,
		\qquad
		\Phi_{(\lambda,\rho)}(R_x)=\rho_x
		\]
		for all \(x\in D\). In other words, \(V\) becomes a left \(A_D\)-module.
		
		Now let
		\[
		f:(V,\lambda,\rho)\longrightarrow (W,\lambda',\rho')
		\]
		be a morphism in \(\Rep(D)\). Then, for every \(x\in D\),
		\[
		f\lambda_x=\lambda'_x f,
		\qquad
		f\rho_x=\rho'_x f.
		\]
		Since the generators \(L_x\) and \(R_x\) act on \(V\) and \(W\) by \(\lambda_x,\rho_x\) and
		\(\lambda'_x,\rho'_x\), respectively, it follows that \(f\) commutes with the action of every
		generator of \(A_D\), hence with the action of every element of \(A_D\). Therefore \(f\) is
		\(A_D\)-linear.
		
		We have thus defined a functor
		\[
		\mathcal{F}:\Rep(D)\longrightarrow A_D\text{-}\mathsf{Mod}.
		\]
		
		\smallskip
		\noindent
		\textbf{Step 2: from modules to representations.}
		Let \(M\) be a left \(A_D\)-module. For each \(x\in D\), define
		\[
		\lambda_x(m):=L_x\cdot m,
		\qquad
		\rho_x(m):=R_x\cdot m,
		\qquad m\in M.
		\]
		We first verify that \(\rho_x\in\Aut_\kk(M)\) for every \(x\in D\). By Lemma~\ref{lem:sharp-element},
		there exists \(x^\sharp\in D\) such that
		\[
		x\vdash x^\sharp\in E
		\qquad\text{and}\qquad
		x^\sharp\vdash x\in E.
		\]
		Using the defining relations of \(A_D\), we compute
		\begin{align*}
			R_xR_{x^\sharp}
			&=R_{x\vdash x^\sharp}
			=1,
		\end{align*}
		because \(x\vdash x^\sharp\in E\), and similarly
		\begin{align*}
			R_{x^\sharp}R_x
			&=R_{x^\sharp\vdash x}
			=1,
		\end{align*}
		because \(x^\sharp\vdash x\in E\). Hence \(R_x\) is invertible in \(A_D\), with inverse
		\(R_{x^\sharp}\). Therefore the operator \(\rho_x\) on \(M\) is invertible, so indeed
		\[
		\rho_x\in \Aut_\kk(M).
		\]
		
		We next verify the five axioms of a representation.
		
		For \emph{(R1)}, let \(m\in M\). Then
		\begin{align*}
			\lambda_{x\dashv y}(m)
			&=L_{x\dashv y}\cdot m\\
			&=(L_xL_y)\cdot m\\
			&=L_x\cdot(L_y\cdot m)\\
			&=\lambda_x(\lambda_y(m)).
		\end{align*}
		Thus
		\[
		\lambda_{x\dashv y}=\lambda_x\lambda_y.
		\]
		
		For \emph{(R2)},
		\begin{align*}
			\rho_{x\vdash y}(m)
			&=R_{x\vdash y}\cdot m\\
			&=(R_xR_y)\cdot m\\
			&=R_x\cdot(R_y\cdot m)\\
			&=\rho_x(\rho_y(m)),
		\end{align*}
		so
		\[
		\rho_{x\vdash y}=\rho_x\rho_y.
		\]
		
		For \emph{(R3)}, if \(e\in E\), then
		\[
		\rho_e(m)=R_e\cdot m=1\cdot m=m.
		\]
		Hence \(\rho_e=\id_M\).
		
		For \emph{(R4)},
		\begin{align*}
			(\rho_x\lambda_y)(m)
			&=\rho_x(\lambda_y(m))\\
			&=R_x\cdot(L_y\cdot m)\\
			&=(R_xL_y)\cdot m\\
			&=L_{x\vdash y}\cdot m\\
			&=\lambda_{x\vdash y}(m).
		\end{align*}
		Therefore
		\[
		\rho_x\lambda_y=\lambda_{x\vdash y}.
		\]
		
		For \emph{(R5)},
		\begin{align*}
			(\lambda_x\rho_y)(m)
			&=\lambda_x(\rho_y(m))\\
			&=L_x\cdot(R_y\cdot m)\\
			&=(L_xR_y)\cdot m\\
			&=L_{x\dashv y}\cdot m\\
			&=\lambda_{x\dashv y}(m).
		\end{align*}
		Hence
		\[
		\lambda_x\rho_y=\lambda_{x\dashv y}.
		\]
		
		Thus \((M,\lambda,\rho)\) is a representation of \(D\).
		
		Now let \(g:M\to N\) be an \(A_D\)-linear map. For every \(x\in D\) and every \(m\in M\),
		\[
		g(\lambda_x(m))
		=
		g(L_x\cdot m)
		=
		L_x\cdot g(m)
		=
		\lambda'_x(g(m)),
		\]
		and similarly
		\[
		g(\rho_x(m))
		=
		g(R_x\cdot m)
		=
		R_x\cdot g(m)
		=
		\rho'_x(g(m)).
		\]
		Hence \(g\) is a morphism of representations. We have therefore defined a functor
		\[
		\mathcal{G}:A_D\text{-}\mathsf{Mod}\longrightarrow \Rep(D).
		\]
		
		\smallskip
		\noindent
		\textbf{Step 3: verification that the functors are quasi-inverse.}
		Let \((V,\lambda,\rho)\) be a representation. Applying \(\mathcal{F}\) endows \(V\) with the
		\(A_D\)-module structure for which
		\[
		L_x\cdot v=\lambda_x(v),
		\qquad
		R_x\cdot v=\rho_x(v).
		\]
		Applying \(\mathcal{G}\) to this module recovers exactly the original operators \(\lambda_x\)
		and \(\rho_x\). Hence
		\[
		\mathcal{G}\circ\mathcal{F}=\id_{\Rep(D)}.
		\]
		
		Conversely, let \(M\) be a left \(A_D\)-module. Applying \(\mathcal{G}\) defines operators
		\(\lambda_x\) and \(\rho_x\) by the actions of \(L_x\) and \(R_x\). Applying \(\mathcal{F}\)
		to the resulting representation recovers the original module structure, because \(A_D\) is
		generated by the classes of the symbols \(L_x\) and \(R_x\). Therefore
		\[
		\mathcal{F}\circ\mathcal{G}=\id_{A_D\text{-}\mathsf{Mod}}.
		\]
		
		The two functors are thus quasi-inverse equivalences of categories.
	\end{proof}
	
	\subsection{Homological consequences}
	
	The categorical equivalence above has immediate formal consequences which will be used throughout
	the paper.
	
	\begin{corollary}\label{cor:abelian}
		The category \(\Rep(D)\) is abelian.
	\end{corollary}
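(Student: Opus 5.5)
The plan is to transport abelianness across the equivalence of Theorem~\ref{thm:equivalence}. The category \(A_D\text{-}\mathsf{Mod}\) of left modules over the unital associative algebra \(A_D\) is abelian. This is standard: it is additive, kernels and cokernels are the set-theoretic ones with the induced module structure, and every monomorphism is the kernel of its cokernel and every epimorphism the cokernel of its kernel. The property of being abelian is invariant under equivalence of categories. The reason is that an equivalence \(\mathcal{G}\) with quasi-inverse \(\mathcal{F}\) is fully faithful and essentially surjective. Hence it preserves and reflects zero objects, biproducts, kernels and cokernels, since these are defined by universal properties. It also identifies the hom-sets bijectively, and this bijection is additive because an equivalence between preadditive categories that preserves finite biproducts is automatically additive. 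Applying this to \(\mathcal{G}:A_D\text{-}\mathsf{Mod}\to\Rep(D)\) gives the corollary.

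For readability, I would also record the concrete description that falls out of this transport, since later sections will need explicit kernels and cokernels in \(\Rep(D)\). Let \(f:(V,\lambda,\rho)\to(W,\lambda',\rho')\) be a morphism.

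\begin{itemize}
\item \textbf{Kernel.} The subspace \(\ker f\) is stable under all \(\lambda_x\) and \(\rho_x\), because \(f\) intertwines them. The operators \(\rho_x|_{\ker f}\) remain invertible, with inverse \(\rho_{x^\sharp}|_{\ker f}\) by Lemma~\ref{lem:sharp-element}, so \(\ker f\) is a subrepresentation.
\item \textbf{Cokernel.} Dually, \(W/\operatorname{im} f\) carries the induced operators. Invertibility again follows from \(R_xR_{x^\sharp}=1=R_{x^\sharp}R_x\).
\item \textbf{Additive structure.} Hom-sets are \(\kk\)-subspaces of \(\Hom_\kk(V,W)\), and direct sums are formed componentwise.
\end{itemize}

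In both the kernel and cokernel cases, all axioms of Definition~\ref{def:representation} are inherited, since they are identities among operators that pass to stable subspaces and quotients.

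The only point requiring care is the invertibility of \(\rho\) on subobjects and quotients. This is exactly where Lemma~\ref{lem:sharp-element} enters, and it is also what makes \(\mathcal{G}\) land in \(\Rep(D)\). Apart from that, the argument is formal: the canonical map from coimage to image is an isomorphism of vector spaces commuting with all the operators. A bijective morphism of representations is an isomorphism, because its linear inverse automatically intertwines the operators. So the last abelian axiom holds in \(\Rep(D)\) directly.
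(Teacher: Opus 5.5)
Your proposal is correct and follows the same route as the paper: transport abelianness from \(A_D\text{-}\mathsf{Mod}\) across the equivalence of Theorem~\ref{thm:equivalence}, using that being abelian is invariant under equivalence. Your explicit description of kernels and cokernels in \(\Rep(D)\), with Lemma~\ref{lem:sharp-element} guaranteeing that \(\rho\) stays invertible on subobjects and quotients, is a correct supplement that the paper does not spell out.
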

	
	\begin{proof}
		By Theorem~\ref{thm:equivalence}, the category \(\Rep(D)\) is equivalent to the category of left
		modules over the associative algebra \(A_D\). The latter is an abelian category. Since abelianness
		is invariant under equivalence of categories, \(\Rep(D)\) is abelian.
	\end{proof}
	
	\begin{corollary}\label{cor:projectives}
		The category \(\Rep(D)\) has enough projectives.
	\end{corollary}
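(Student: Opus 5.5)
The plan is to transport the standard fact from \(A_D\text{-}\mathsf{Mod}\) to \(\Rep(D)\) along the equivalence of Theorem~\ref{thm:equivalence}, in the same way Corollary~\ref{cor:abelian} was obtained.

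First, I will show that \(A_D\text{-}\mathsf{Mod}\) has enough projectives. Let \(M\) be a left \(A_D\)-module. Form the free module \(F=\bigoplus_{m\in M}A_D\), with basis elements \(e_m\). Define the \(A_D\)-linear map \(F\to M\) by \(e_m\mapsto m\); it is surjective. Free modules are projective, since a map out of \(F\) is determined by arbitrary images of the basis, so lifts along any epimorphism exist. Hence every module is a quotient of a projective module.

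Second, I will check that an equivalence of categories preserves projective objects and epimorphisms. Both notions are defined purely categorically: \(P\) is projective if \(\Hom(P,-)\) carries epimorphisms to surjections, and epimorphisms are characterized by right cancellation. An equivalence \(\mathcal{G}\) with quasi-inverse \(\mathcal{F}\) is fully faithful and essentially surjective. Consequently \(\Hom(\mathcal{G}P,\mathcal{G}X)\cong\Hom(P,X)\) naturally, and \(\mathcal{G}\) reflects and preserves epimorphisms.

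Finally, given \(V\in\Rep(D)\), I take a projective \(P\) with an epimorphism \(P\to\mathcal{F}V\). Applying \(\mathcal{G}\) gives an epimorphism \(\mathcal{G}P\to\mathcal{G}\mathcal{F}V=V\), using \(\mathcal{G}\circ\mathcal{F}=\id\) from Step~3 of the proof of Theorem~\ref{thm:equivalence}. Here \(\mathcal{G}P\) is projective by the second step, so \(\Rep(D)\) has enough projectives.

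I expect no real obstacle. The only point needing care is the invariance of projectivity under equivalence, which follows from full faithfulness together with the preservation of epimorphisms. Alternatively, one can note concretely that epimorphisms in \(\Rep(D)\) are the surjective morphisms, and that \(\mathcal{G}\) does not change the underlying vector spaces or maps, so the argument can be carried out directly.
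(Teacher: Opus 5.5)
Your proposal is correct and follows the paper's proof: both transport the standard fact that module categories have enough projectives across the equivalence \(\Rep(D)\simeq A_D\text{-}\mathsf{Mod}\) of Theorem~\ref{thm:equivalence}. You additionally spell out the details the paper leaves implicit, namely that free modules are projective and that equivalences preserve projective objects and epimorphisms.
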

	
	\begin{proof}
		The category of left modules over any associative algebra has enough projectives. By
		Theorem~\ref{thm:equivalence}, \(\Rep(D)\) is equivalent to such a module category. Hence
		\(\Rep(D)\) also has enough projectives.
	\end{proof}
	
	\begin{corollary}\label{cor:injectives}
		The category \(\Rep(D)\) has enough injectives.
	\end{corollary}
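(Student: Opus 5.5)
The plan is to argue exactly as for Corollary~\ref{cor:projectives}: transport a standard fact about module categories across the equivalence of Theorem~\ref{thm:equivalence}. First I will recall that for any unital associative ring \(A\) the category \(A\text{-}\mathsf{Mod}\) has enough injectives. If one wants more than a citation, the standard argument goes in three steps. Abelian groups have enough injectives, since every abelian group embeds into a divisible one, for instance a quotient of a direct sum of copies of \(\mathbb{Q}\). Divisible groups are injective by Baer's criterion. Finally, the coinduction functor \(\Hom_{\mathbb{Z}}(A,-)\) is right adjoint to the exact forgetful functor \(A\text{-}\mathsf{Mod}\to\mathsf{Ab}\), so it preserves injectives, and every module \(M\) embeds into \(\Hom_{\mathbb{Z}}(A,I)\) via \(m\mapsto(a\mapsto\iota(am))\), where \(\iota:M\hookrightarrow I\) is an embedding into an injective group. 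Since \(A_D\) is a \(\kk\)-algebra, one may instead coinduce from \(\kk\)-vector spaces, where every object is injective: the map \(M\to\Hom_\kk(A_D,M)\) is then already an injective embedding.

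Next I will check that having enough injectives is invariant under equivalence of categories. Let \(\mathcal{F}:\Rep(D)\to A_D\text{-}\mathsf{Mod}\) and \(\mathcal{G}\) be the quasi-inverse functors from the proof of Theorem~\ref{thm:equivalence}. Both functors are fully faithful and send monomorphisms to monomorphisms, because monomorphisms are characterized by a universal property that equivalences respect. They also send injective objects to injective objects, because injectivity is defined purely in terms of \(\Hom\)-sets and monomorphisms. So, given \(V\in\Rep(D)\), I choose a monomorphism \(\mathcal{F}(V)\hookrightarrow I\) with \(I\) an injective \(A_D\)-module. Applying \(\mathcal{G}\) gives a monomorphism \(V=\mathcal{G}\mathcal{F}(V)\hookrightarrow\mathcal{G}(I)\), and \(\mathcal{G}(I)\) is injective in \(\Rep(D)\).

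I expect no real obstacle here. The only point needing care is the invariance of injectivity under the equivalence. This follows because natural bijections \(\Hom(\mathcal{G}X,\mathcal{G}Y)\cong\Hom(X,Y)\) turn the lifting property for \(\mathcal{G}(I)\) into the lifting property for \(I\).

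The concrete \(\Rep(D)\)-version is also easy to state if desired. Take \(\Hom_\kk(A_D,V)\) with the action \((a\cdot\varphi)(b)=\varphi(ba)\), and read off \(\lambda_x\) and \(\rho_x\) as the actions of \(L_x\) and \(R_x\).
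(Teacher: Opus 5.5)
Your proposal is correct and takes the same route as the paper: it transports the fact that \(A_D\text{-}\mathsf{Mod}\) has enough injectives across the equivalence of Theorem~\ref{thm:equivalence}. You also spell out the standard justifications the paper takes for granted, namely the coinduction argument (from \(\kk\)-vector spaces) and the invariance of injectivity and monomorphisms under an equivalence.
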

	
	\begin{proof}
		By Theorem~\ref{thm:equivalence}, the category \(\Rep(D)\) is equivalent to the category of left
		modules over the associative algebra \(A_D\). The latter has enough injectives. Hence
		\(\Rep(D)\) also has enough injectives.
	\end{proof}
	
	\begin{corollary}\label{cor:Ext-defined}
		For all objects \(Q,W\in\Rep(D)\) and every integer \(n\ge 0\), the extension group
		\[
		\Ext^n_{\Rep(D)}(Q,W)
		\]
		is well defined.
	\end{corollary}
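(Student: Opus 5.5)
The argument is formal, resting on Corollaries~\ref{cor:abelian}, \ref{cor:projectives} and~\ref{cor:injectives}. By Corollary~\ref{cor:abelian}, \(\Rep(D)\) is abelian, so \(\Hom_{\Rep(D)}(Q,-)\) and \(\Hom_{\Rep(D)}(-,W)\) are additive left exact functors into abelian groups; in fact they land in \(\kk\)-vector spaces, since morphisms are \(\kk\)-linear maps closed under linear combinations. For \(n\ge 0\) I will define
\[
\Ext^n_{\Rep(D)}(Q,W):=R^n\Hom_{\Rep(D)}(Q,-)(W).
\]
This right derived functor is computed by taking an injective resolution \(W\to I^\bullet\), which exists by Corollary~\ref{cor:injectives}, applying \(\Hom_{\Rep(D)}(Q,-)\), and taking the \(n\)-th cohomology.

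Independence of the chosen resolution follows from the comparison theorem. Any two injective resolutions of \(W\) are chain homotopy equivalent via maps lifting \(\id_W\), and these lifts are unique up to homotopy. Applying the additive functor \(\Hom_{\Rep(D)}(Q,-)\) preserves chain homotopies, so the induced isomorphisms on cohomology are canonical. Dually, Corollary~\ref{cor:projectives} gives projective resolutions \(P_\bullet\to Q\), and the same reasoning shows that \(H^n(\Hom_{\Rep(D)}(P_\bullet,W))\) is well defined. The standard balancing argument, via the double complex \(\Hom(P_\bullet,I^\bullet)\) and its two spectral sequences or a direct acyclic-rows argument, identifies the two constructions. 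So it does not matter which variable is resolved.

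As a consistency check, I can transport everything through Theorem~\ref{thm:equivalence}. An equivalence of abelian categories is exact and preserves injective and projective objects, so
\[
\Ext^n_{\Rep(D)}(Q,W)\cong \Ext^n_{A_D}(\mathcal F Q,\mathcal F W).
\]
This reduces the statement to the classical well-definedness of \(\Ext\) over an associative algebra. I will also note that \(\Ext^0=\Hom\), and that Yoneda's description is available because the category is abelian; this is needed later for extension classes.

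The only step needing care is the verification that the equivalence \(\mathcal F\) is exact and preserves injectives and projectives, which is what makes the transport legitimate. This holds because \(\mathcal F\) and \(\mathcal G\) are mutually inverse, in fact isomorphisms of categories by Step~3 of Theorem~\ref{thm:equivalence}. Both are additive and both preserve kernels and cokernels, which are computed on underlying vector spaces in both categories. Since an injective or projective object is characterized by a lifting property, and isomorphisms of categories preserve such properties, they preserve these objects as well. I expect no genuine obstacle here.
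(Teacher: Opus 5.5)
Your proposal is correct and follows the paper's argument: Corollary~\ref{cor:abelian} and Corollary~\ref{cor:injectives} give injective resolutions, so $\Ext^n_{\Rep(D)}(Q,W)$ is defined as $R^n\Hom_{\Rep(D)}(Q,-)(W)$, and your comparison-theorem step simply makes explicit the independence of the chosen resolution. The balancing with projective resolutions and the transport to $A_D$-modules are correct but go beyond what the paper needs for this statement.
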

	
	\begin{proof}
		By Corollary~\ref{cor:abelian}, the category \(\Rep(D)\) is abelian, and by
		Corollary~\ref{cor:injectives}, it has enough injectives. Therefore the right derived functors of
		\[
		\Hom_{\Rep(D)}(Q,-)
		\]
		exist. By definition, these derived functors are precisely the groups
		\[
		\Ext^n_{\Rep(D)}(Q,W).
		\]
	\end{proof}
	
	\begin{remark}
		Section~\ref{sec:enveloping} provides the formal bridge from generalized digroups to homological
		algebra. From this point on, extension classes in \(\Rep(D)\) may be studied either categorically
		or through the algebra \(A_D\). This is the framework in which the Maschke splitting and the
		cocycle model for \(\Ext^1\) will be developed in the next section.
	\end{remark}
	
	\section{\texorpdfstring{Maschke splitting and a cocycle model for $\Ext^1$}{Maschke splitting and a cocycle model for Ext1}}
	\label{sec:cocycle}
	
	We now turn to the cohomological core of the paper. The main point is that, under a Maschke-type
	hypothesis on the group component, every short exact sequence splits on the \(\rho\)-side. The
	remaining obstruction to splitting in the full representation category is then forced to live in
	the \(\lambda\)-side, where it can be encoded by an explicit family of cocycles. The purpose of
	this section is to make that obstruction precise and to identify it with \(\Ext^1\).
	
	\subsection{The group part of the \texorpdfstring{$\rho$}{rho}-action}
	
	The first point is that the \(\rho\)-operators depend only on the group component.
	
	\begin{proposition}\label{prop:rho-depends-only-on-g}
		Let \((V,\lambda,\rho)\) be a representation of \(D=G\times E\). Then, for every \(g\in G\) and
		every \(\alpha,\beta\in E\),
		\[
		\rho_{(g,\alpha)}=\rho_{(g,\beta)}.
		\]
		Consequently, one may define
		\[
		\rho_g:=\rho_{(g,\alpha)}
		\qquad(g\in G),
		\]
		independently of the choice of \(\alpha\in E\). Moreover,
		\[
		\rho_{gh}=\rho_g\rho_h
		\qquad(g,h\in G),
		\]
		so that \(g\mapsto \rho_g\) is a genuine representation of the group \(G\) on \(V\).
	\end{proposition}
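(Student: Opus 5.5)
The plan is to express $(g,\beta)$ as a $\vdash$-product in which a bar-unit sits on the left and absorbs the $\alpha$-versus-$\beta$ dependence, and then use $\rho_e=\id_V$. In the product model of Construction~\ref{cons:product-model}, the halo is $\overline{E}=\{(1,\gamma)\mid\gamma\in E\}$. Hence, by Definition~\ref{def:representation}, $\rho_{(1,\gamma)}=\id_V$ for every $\gamma\in E$.

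First we compute, for any $\gamma\in E$,
\[
(1,\gamma)\vdash(g,\alpha)=(g,\,1\bullet\alpha)=(g,\alpha).
\]
This is the wrong direction: the second coordinate of a $\vdash$-product is governed by the right factor. So we look instead at
\[
(g,\alpha)\vdash(1,\gamma)=(g,\,g\bullet\gamma).
\]
Applying $\rho$ and using multiplicativity together with $\rho_{(1,\gamma)}=\id_V$ gives
\[
\rho_{(g,g\bullet\gamma)}=\rho_{(g,\alpha)}\rho_{(1,\gamma)}=\rho_{(g,\alpha)}.
\]
Given $\beta\in E$, we choose $\gamma:=g^{-1}\bullet\beta$, so that $g\bullet\gamma=\beta$. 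This yields $\rho_{(g,\beta)}=\rho_{(g,\alpha)}$ for arbitrary $\alpha,\beta$, which settles the independence claim. The main point to get right is exactly this choice of the bar-unit on the right, with the action used to reach every $\beta$. It is the only step that is not purely formal, since it relies on $\bullet$ being a group action so that $g\bullet(g^{-1}\bullet\beta)=\beta$.

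For multiplicativity, we fix any $\alpha\in E$ and compute
\[
\rho_g\rho_h=\rho_{(g,\alpha)}\rho_{(h,\alpha)}=\rho_{(g,\alpha)\vdash(h,\alpha)}=\rho_{(gh,\,g\bullet\alpha)}=\rho_{gh},
\]
where the last equality uses the independence just proved.

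It remains to check that $g\mapsto\rho_g$ is a group representation. We have $\rho_1=\rho_{(1,\alpha)}=\id_V$, and $\rho_g\in\Aut(V)$ by definition, with inverse $\rho_{g^{-1}}$ by multiplicativity. If the representation is given on an abstract $D$ rather than on $G\times E$, we would first transport it along the isomorphism of Theorem~\ref{thm:structure}, which preserves both products and the halo, and then argue as above.
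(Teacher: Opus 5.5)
Your proof is correct and follows essentially the same route as the paper: you factor \((g,\beta)=(g,\alpha)\vdash(1,g^{-1}\bullet\beta)\), use (R2) together with \(\rho_{(1,\gamma)}=\id_V\) to get independence of the halo coordinate, and then obtain \(\rho_{gh}=\rho_g\rho_h\) from \((g,\alpha)\vdash(h,\beta)=(gh,g\bullet\beta)\). Your additional checks that \(\rho_1=\id_V\) and \(\rho_{g^{-1}}=\rho_g^{-1}\) are left implicit in the paper.
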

	
	\begin{proof}
		Fix \(g\in G\) and \(\alpha,\beta,\gamma\in E\). Since \(G\) acts on \(E\), we may write
		\[
		\alpha=g\bullet(g^{-1}\bullet\alpha).
		\]
		Therefore,
		\[
		(g,\alpha)=(g,\gamma)\vdash(1,g^{-1}\bullet\alpha).
		\]
		Applying axiom \emph{(R2)} and then \emph{(R3)}, we obtain
		\[
		\rho_{(g,\alpha)}
		=
		\rho_{(g,\gamma)}\,\rho_{(1,g^{-1}\bullet\alpha)}
		=
		\rho_{(g,\gamma)}\,\id_V
		=
		\rho_{(g,\gamma)}.
		\]
		Since \(\gamma\) is arbitrary, this proves that \(\rho_{(g,\alpha)}\) depends only on \(g\).
		
		Now let \(g,h\in G\) and choose any \(\alpha,\beta\in E\). Then
		\[
		(g,\alpha)\vdash(h,\beta)=(gh,\,g\bullet\beta),
		\]
		so by \emph{(R2)},
		\[
		\rho_{gh}
		=
		\rho_{(gh,g\bullet\beta)}
		=
		\rho_{(g,\alpha)}\,\rho_{(h,\beta)}
		=
		\rho_g\rho_h.
		\]
		Thus \(g\mapsto \rho_g\) is a genuine representation of \(G\).
	\end{proof}
	
	\begin{remark}\label{rem:lambda-factorization}
		If we write
		\[
		L_\alpha:=\lambda_{(1,\alpha)}
		\qquad(\alpha\in E),
		\]
		then for every \(g\in G\) and \(\alpha\in E\) one has
		\[
		\lambda_{(g,\alpha)}=L_\alpha\,\rho_g.
		\]
		Indeed, for any \(\beta\in E\),
		\[
		(1,\alpha)\dashv(g,\beta)=(g,\alpha),
		\]
		and therefore axiom \emph{(R5)} gives
		\[
		\lambda_{(g,\alpha)}
		=
		\lambda_{(1,\alpha)\dashv(g,\beta)}
		=
		\lambda_{(1,\alpha)}\rho_{(g,\beta)}
		=
		L_\alpha\rho_g.
		\]
		We shall not use this identity immediately, but it will later clarify the role played by the halo.
	\end{remark}
	
	\subsection{Maschke splitting on the \texorpdfstring{$\rho$}{rho}-side}
	
	We now impose the hypothesis under which the classical Maschke theorem may be applied.
	
	\medskip
	\noindent
	\textbf{Standing hypothesis.}
	\emph{Throughout the rest of this section, we assume that \(G\) is finite and that
		\(\operatorname{char}(\kk)=0\).}
	\medskip
	
	Under this hypothesis, the group algebra \(\kk[G]\) is semisimple \cite[Theorem 8.7]{JamesLiebeck2001}. In particular, every
	\(\kk[G]\)-module is semisimple, and therefore every short exact sequence of \(G\)-modules splits.
	
	\begin{proposition}\label{prop:rho-splitting}
		Let
		\begin{equation}\label{eq:ses-main}
			0\longrightarrow W \stackrel{\iota}{\longrightarrow} V \stackrel{\pi}{\longrightarrow} Q\longrightarrow 0
		\end{equation}
		be a short exact sequence in \(\Rep(D)\). Then there exists a \(\kk\)-linear map
		\[
		s:Q\longrightarrow V
		\]
		such that
		\[
		\pi\circ s=\id_Q
		\]
		and
		\[
		\rho_x\circ s=s\circ \rho_x^Q
		\qquad(x\in D).
		\]
		Equivalently, every short exact sequence in \(\Rep(D)\) splits on the \(\rho\)-side.
	\end{proposition}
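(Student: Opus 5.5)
We work in the structural model \(D=G\times E\) of Theorem~\ref{thm:structure} and reduce the statement to the classical Maschke theorem for the finite group \(G\). By Proposition~\ref{prop:rho-depends-only-on-g}, each of the representations \(W\), \(V\), \(Q\) restricts to a genuine representation \(g\mapsto\rho_g\) of \(G\), and \(\rho_{(g,\alpha)}=\rho_g\) for every \(\alpha\in E\). A morphism in \(\Rep(D)\) commutes with all \(\rho_x\), so \(\iota\) and \(\pi\) are \(\kk[G]\)-linear. Hence \eqref{eq:ses-main} is in particular a short exact sequence of \(\kk[G]\)-modules.

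Next we construct \(s\) explicitly by averaging, so that the argument does not depend on finite-dimensionality. Choose any \(\kk\)-linear section \(s_0:Q\to V\) with \(\pi s_0=\id_Q\); one exists because \(\pi\) is surjective and we are working with vector spaces. Define
\[
s:=\frac{1}{\lvert G\rvert}\sum_{g\in G}\rho_g\, s_0\,(\rho^Q_g)^{-1}.
\]
This is legitimate since \(\operatorname{char}(\kk)=0\) and \(G\) is finite. Because \(\pi\rho_g=\rho^Q_g\pi\), each summand satisfies \(\pi\rho_g s_0(\rho^Q_g)^{-1}=\rho^Q_g\pi s_0(\rho^Q_g)^{-1}=\id_Q\). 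Averaging these identities gives \(\pi s=\id_Q\).

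For equivariance, fix \(h\in G\) and reindex the sum by \(g\mapsto hg\). This gives \(\rho_h s=s\rho^Q_h\), using \(\rho_{hg}=\rho_h\rho_g\) and \((\rho^Q_{hg})^{-1}=(\rho^Q_g)^{-1}(\rho^Q_h)^{-1}\) from Proposition~\ref{prop:rho-depends-only-on-g}. Finally, for an arbitrary \(x\in D\), write \(x=(g,\alpha)\) under the structural isomorphism. Then \(\rho_x=\rho_g\) on \(V\) and \(\rho^Q_x=\rho^Q_g\) on \(Q\), so \(\rho_x s=s\rho^Q_x\).

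The only point needing care is this last transfer from \(G\) to \(D\). It relies on the fact that the isomorphism \(D\simeq G\times E\) of Theorem~\ref{thm:structure} transports representations, so that Proposition~\ref{prop:rho-depends-only-on-g} applies to \(D\) itself. This holds because an isomorphism of generalized digroups preserves \(\vdash\), \(\dashv\) and the halo, and therefore the representation axioms of Definition~\ref{def:representation}. The remaining steps are routine. The equivalent formulation, that the sequence splits on the \(\rho\)-side, is simply a restatement of the existence of such an \(s\).
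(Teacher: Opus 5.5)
Your proof is correct and follows the paper's argument. You reduce via Proposition~\ref{prop:rho-depends-only-on-g} to a short exact sequence of $\kk[G]$-modules, obtain a $G$-equivariant section by Maschke, and then extend equivariance to all $x\in D$ because $\rho_x$ depends only on the group component; the only difference is that you write out the averaging operator explicitly, whereas the paper cites semisimplicity of $\kk[G]$ (which already covers infinite-dimensional modules).
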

	
	\begin{proof}
		By Proposition~\ref{prop:rho-depends-only-on-g}, the operators \(\rho_x\) depend only on the group
		component \(g\in G\), and the assignment
		\[
		g\longmapsto \rho_g
		\]
		is a genuine representation of \(G\). Thus \eqref{eq:ses-main} is, in particular, a short exact
		sequence of \(G\)-modules.
		
		Since \(G\) is finite and \(\operatorname{char}(\kk)=0\), Maschke's theorem applies, and there
		exists a \(G\)-equivariant linear section
		\[
		s:Q\longrightarrow V
		\]
		such that
		\[
		\pi\circ s=\id_Q
		\]
		and
		\[
		\rho_g\circ s=s\circ \rho_g^Q
		\qquad(g\in G).
		\]
		Finally, because \(\rho_x\) depends only on the group component of \(x\in D=G\times E\), the same
		identity holds for every \(x\in D\). Thus \(s\) is \(\rho\)-equivariant on all of \(D\).
	\end{proof}
	
	Fix once and for all such a \(\rho\)-equivariant splitting \(s\). Then every element \(v\in V\)
	admits a unique decomposition
	\[
	v=\iota(w)+s(q)
	\qquad(w\in W,\ q\in Q),
	\]
	and hence \(V\) is identified, as a vector space, with \(W\oplus Q\).
	
	\begin{lemma}\label{lem:block-forms}
		With respect to the decomposition \(V\cong W\oplus Q\) induced by the splitting \(s\), the operators
		\(\rho_x\) and \(\lambda_x\) have the form
		\[
		\rho_x=
		\begin{pmatrix}
			\rho_x^W & 0\\
			0 & \rho_x^Q
		\end{pmatrix},
		\qquad
		\lambda_x=
		\begin{pmatrix}
			\lambda_x^W & \theta_x\\
			0 & \lambda_x^Q
		\end{pmatrix},
		\]
		where \(\theta_x\in \Hom_\kk(Q,W)\) for every \(x\in D\).
	\end{lemma}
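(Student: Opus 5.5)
We will compute the block matrices directly in the vector-space decomposition. The identification \(V\cong W\oplus Q\) is given by the linear isomorphism \((w,q)\mapsto \iota(w)+s(q)\). Under it, the inclusion \(\iota\) becomes the first coordinate inclusion and \(\pi\) becomes the second coordinate projection, because \(\pi\iota=0\) and \(\pi s=\id_Q\). A linear operator \(T\) on \(V\) then has a \(2\times 2\) block form
\[
\begin{pmatrix} T_{11}&T_{12}\\ T_{21}&T_{22}\end{pmatrix},
\]
with \(T_{12}\in\Hom_\kk(Q,W)\) and \(T_{21}\in\Hom_\kk(W,Q)\). We determine these blocks for \(T=\rho_x\) and \(T=\lambda_x\) using only two facts: \(\iota\) and \(\pi\) are morphisms in \(\Rep(D)\), and \(s\) is \(\rho\)-equivariant (Proposition~\ref{prop:rho-splitting}).

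\textbf{Step 1: the first column.} Since \(\iota\) is a morphism of representations, \(\rho_x\iota=\iota\rho_x^W\) and \(\lambda_x\iota=\iota\lambda_x^W\). Hence \(W\) is stable under both operators, and their restrictions to \(W\) are \(\rho_x^W\) and \(\lambda_x^W\). This gives \(T_{21}=0\) and \(T_{11}=\rho_x^W\), respectively \(T_{11}=\lambda_x^W\).

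\textbf{Step 2: the second diagonal entry.} Since \(\pi\) is a morphism, \(\pi\rho_x=\rho_x^Q\pi\) and \(\pi\lambda_x=\lambda_x^Q\pi\). Evaluating at \(s(q)\) and using \(\pi s=\id_Q\) gives
\[
\pi\lambda_x s(q)=\lambda_x^Q q,
\qquad
\pi\rho_x s(q)=\rho_x^Q q.
\]
So the \(Q\)-component of \(\lambda_x s(q)\) is \(\lambda_x^Q q\), and similarly for \(\rho_x\). Thus \(T_{22}=\lambda_x^Q\), respectively \(T_{22}=\rho_x^Q\).

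\textbf{Step 3: the off-diagonal entries.} For \(\rho_x\), equivariance of \(s\) gives \(\rho_x s(q)=s(\rho_x^Q q)\), which lies entirely in the \(Q\)-summand. Hence the upper-right block of \(\rho_x\) vanishes, and \(\rho_x\) is block diagonal.

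For \(\lambda_x\) no such equivariance is available. By Step 2, however, \(\lambda_x s(q)-s(\lambda_x^Q q)\in\ker\pi=\iota(W)\). We therefore define
\[
\theta_x(q):=\iota^{-1}\bigl(\lambda_x s(q)-s(\lambda_x^Q q)\bigr).
\]
This is well defined because \(\iota\) is injective, and it is \(\kk\)-linear as a composite of linear maps. So \(\theta_x\in\Hom_\kk(Q,W)\), and it is exactly the upper-right block of \(\lambda_x\).

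There is no real obstacle in this argument. The one point needing care is Step 3: we must observe that only \(\rho\)-equivariance of \(s\) is available. The upper-right block therefore vanishes for \(\rho_x\) but in general survives for \(\lambda_x\), where the difference \(\lambda_x s-s\lambda_x^Q\) lands in \(\iota(W)\) precisely because \(\pi\) intertwines the \(\lambda\)-actions.
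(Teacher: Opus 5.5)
Your proof is correct and follows the same route as the paper. Stability of \(W\) kills the lower-left blocks, \(\rho\)-equivariance of \(s\) kills the upper-right block of \(\rho_x\), and \(\theta_x\) is defined as the \(W\)-component of \(\lambda_x s(q)\). You are also more thorough than the paper, since you explicitly identify the diagonal blocks as \(\rho_x^W,\lambda_x^W\) and \(\rho_x^Q,\lambda_x^Q\) from the fact that \(\iota\) and \(\pi\) are morphisms, a point the paper leaves implicit.
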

	
	\begin{proof}
		Since \(W\) is a subrepresentation, it is stable under all \(\rho_x\) and all \(\lambda_x\). Hence
		the lower-left block of both operators is zero.
		
		Because the chosen splitting \(s\) is \(\rho\)-equivariant, one has
		\[
		\rho_x(s(q))=s(\rho_x^Q(q))
		\qquad(x\in D,\ q\in Q),
		\]
		so no \(W\)-component is produced from the \(Q\)-part under \(\rho_x\). Therefore the upper-right
		block of \(\rho_x\) is also zero. Thus
		\[
		\rho_x=
		\begin{pmatrix}
			\rho_x^W & 0\\
			0 & \rho_x^Q
		\end{pmatrix}.
		\]
		
		For \(\lambda_x\) there is a uniquely
		determined linear map
		\[
		\theta_x:Q\to W,
		\]
		which records the \(W\)-component of \(\lambda_x(s(q))\). Hence
		\[
		\lambda_x=
		\begin{pmatrix}
			\lambda_x^W & \theta_x\\
			0 & \lambda_x^Q
		\end{pmatrix}.
		\]
	\end{proof}
	
	\begin{remark}
		The family \(\theta=\{\theta_x\}_{x\in D}\) is the obstruction to extending the \(\rho\)-equivariant
		splitting to a full splitting in \(\Rep(D)\). If \(\theta_x=0\) for every \(x\in D\), then the image
		of \(s\) is stable under all \(\lambda_x\) and all \(\rho_x\), and therefore \eqref{eq:ses-main}
		splits in the full representation category.
	\end{remark}
	
	\subsection{\texorpdfstring{The cocycle space \(Z^1(Q,W)\)}{The cocycle space Z1(Q,W)}}
	
	We now determine the identities satisfied by the family \(\theta\).
	
	\begin{definition}\label{def:Z1}
		Let \(Q,W\in \Rep(D)\). Define \(Z^1(Q,W)\) to be the set of all families
		\[
		\theta=\{\theta_x\}_{x\in D},
		\qquad
		\theta_x\in \Hom_\kk(Q,W),
		\]
		such that, for all \(x,y\in D\),
		\begin{align}
			\theta_{x\dashv y}&=\lambda_x^W\theta_y+\theta_x\lambda_y^Q, \label{eq:Z1a}\\
			\theta_{x\vdash y}&=\rho_x^W\theta_y, \label{eq:Z1b}\\
			\theta_{x\dashv y}&=\theta_x\rho_y^Q. \label{eq:Z1c}
		\end{align}
	\end{definition}
	
	\begin{lemma}\label{lem:cocycle-identities}
		Let \eqref{eq:ses-main} be a short exact sequence in \(\Rep(D)\), choose a \(\rho\)-equivariant
		splitting \(s\), and let \(\theta=\{\theta_x\}_{x\in D}\) be the corresponding off-diagonal family
		from Lemma~\ref{lem:block-forms}. Then \(\theta\in Z^1(Q,W)\).
	\end{lemma}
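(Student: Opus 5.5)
The plan is to read off each identity in Definition~\ref{def:Z1} from the upper-right block of a representation axiom for \(V\). All computations use the block forms of Lemma~\ref{lem:block-forms} with respect to \(V\cong W\oplus Q\). Block matrices multiply as usual, and the diagonal blocks are the operators of the sub- and quotient representations. This is because \(\iota\) and \(\pi\) are morphisms, so \(\lambda_x\iota=\iota\lambda_x^W\) and \(\pi\lambda_x=\lambda_x^Q\pi\), and similarly for \(\rho\).

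For \eqref{eq:Z1a}, apply (R1), \(\lambda_{x\dashv y}=\lambda_x\lambda_y\), and multiply the two upper-triangular block matrices. The upper-right entry of the product is \(\lambda_x^W\theta_y+\theta_x\lambda_y^Q\), while the upper-right entry of \(\lambda_{x\dashv y}\) is \(\theta_{x\dashv y}\), which gives the identity.

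For \eqref{eq:Z1b}, use (R4), \(\rho_x\lambda_y=\lambda_{x\vdash y}\). Since \(\rho_x\) is block diagonal by Lemma~\ref{lem:block-forms}, the upper-right entry of \(\rho_x\lambda_y\) is \(\rho_x^W\theta_y\), and comparing with the upper-right entry of \(\lambda_{x\vdash y}\) gives \(\theta_{x\vdash y}=\rho_x^W\theta_y\).

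For \eqref{eq:Z1c}, use (R5), \(\lambda_x\rho_y=\lambda_{x\dashv y}\). The upper-right entry of \(\lambda_x\rho_y\) is \(\theta_x\rho_y^Q\), and this yields \(\theta_{x\dashv y}=\theta_x\rho_y^Q\).

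The only point needing care is the bookkeeping of the block structure. I must check that the diagonal blocks of \(\lambda_x\) and \(\rho_x\) really are \(\lambda_x^W,\lambda_x^Q,\rho_x^W,\rho_x^Q\). I must also check that \(\theta_x\) is well defined as the \(W\)-component of \(\lambda_x s(q)\), namely \(\iota^{-1}(\lambda_x s(q)-s\lambda_x^Q(q))\), where the quotient component is \(\lambda_x^Q q\) because \(\pi\lambda_x s=\lambda_x^Q\pi s=\lambda_x^Q\). With the identifications \(\iota\) and \(s\) fixed, the argument is otherwise a direct entrywise comparison. Axioms (R2) and (R3) impose nothing on \(\theta\), since their block matrices are diagonal.
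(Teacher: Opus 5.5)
Your proposal is correct and follows the paper's proof: the paper also compares the upper-right blocks of the identities (R1), (R4) and (R5), using the block forms from Lemma~\ref{lem:block-forms}, to obtain \eqref{eq:Z1a}, \eqref{eq:Z1b} and \eqref{eq:Z1c}. Your extra check that the diagonal blocks are the sub- and quotient operators, and that \(\theta_x\) is well defined, fills in bookkeeping the paper leaves implicit but does not change the argument.
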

	
	\begin{proof}
		We compare the upper-right blocks in the representation identities.
		
		\smallskip
		\noindent
		\textbf{Step 1: the identity \eqref{eq:Z1a}.}
		Axiom \emph{(R1)} gives
		\[
		\lambda_{x\dashv y}=\lambda_x\lambda_y.
		\]
		Using Lemma~\ref{lem:block-forms},
		\[
		\lambda_x=
		\begin{pmatrix}
			\lambda_x^W & \theta_x\\
			0 & \lambda_x^Q
		\end{pmatrix},
		\qquad
		\lambda_y=
		\begin{pmatrix}
			\lambda_y^W & \theta_y\\
			0 & \lambda_y^Q
		\end{pmatrix}.
		\]
		Multiplying these matrices yields
		\[
		\lambda_x\lambda_y=
		\begin{pmatrix}
			\lambda_x^W\lambda_y^W &
			\lambda_x^W\theta_y+\theta_x\lambda_y^Q\\
			0 &
			\lambda_x^Q\lambda_y^Q
		\end{pmatrix}.
		\]
		On the other hand,
		\[
		\lambda_{x\dashv y}=
		\begin{pmatrix}
			\lambda_{x\dashv y}^W & \theta_{x\dashv y}\\
			0 & \lambda_{x\dashv y}^Q
		\end{pmatrix}.
		\]
		Comparing the upper-right blocks gives
		\[
		\theta_{x\dashv y}=\lambda_x^W\theta_y+\theta_x\lambda_y^Q.
		\]
		
		\smallskip
		\noindent
		\textbf{Step 2: the identity \eqref{eq:Z1b}.}
		Axiom \emph{(R4)} gives
		\[
		\rho_x\lambda_y=\lambda_{x\vdash y}.
		\]
		Since \(\rho_x\) is block diagonal,
		\[
		\rho_x=
		\begin{pmatrix}
			\rho_x^W & 0\\
			0 & \rho_x^Q
		\end{pmatrix},
		\qquad
		\lambda_y=
		\begin{pmatrix}
			\lambda_y^W & \theta_y\\
			0 & \lambda_y^Q
		\end{pmatrix},
		\]
		so
		\[
		\rho_x\lambda_y=
		\begin{pmatrix}
			\rho_x^W\lambda_y^W & \rho_x^W\theta_y\\
			0 & \rho_x^Q\lambda_y^Q
		\end{pmatrix}.
		\]
		Comparing the upper-right blocks with
		\[
		\lambda_{x\vdash y}=
		\begin{pmatrix}
			\lambda_{x\vdash y}^W & \theta_{x\vdash y}\\
			0 & \lambda_{x\vdash y}^Q
		\end{pmatrix}
		\]
		gives
		\[
		\theta_{x\vdash y}=\rho_x^W\theta_y.
		\]
		
		\smallskip
		\noindent
		\textbf{Step 3: the identity \eqref{eq:Z1c}.}
		Axiom \emph{(R5)} gives
		\[
		\lambda_x\rho_y=\lambda_{x\dashv y}.
		\]
		Using the block forms,
		\[
		\lambda_x\rho_y=
		\begin{pmatrix}
			\lambda_x^W & \theta_x\\
			0 & \lambda_x^Q
		\end{pmatrix}
		\begin{pmatrix}
			\rho_y^W & 0\\
			0 & \rho_y^Q
		\end{pmatrix}
		=
		\begin{pmatrix}
			\lambda_x^W\rho_y^W & \theta_x\rho_y^Q\\
			0 & \lambda_x^Q\rho_y^Q
		\end{pmatrix}.
		\]
		Comparing the upper-right blocks with \(\lambda_{x\dashv y}\) yields
		\[
		\theta_{x\dashv y}=\theta_x\rho_y^Q.
		\]
		
		Thus \(\theta\) satisfies all defining identities of \(Z^1(Q,W)\).
	\end{proof}
	
	\subsection{\texorpdfstring{Coboundaries and the quotient \(Z^1/B^1\)}{Coboundaries and the quotient Z1/B1}}
	
	We next describe the dependence of \(\theta\) on the choice of \(\rho\)-equivariant splitting.
	
	\begin{definition}\label{def:B1}
		Let \(Q,W\in\Rep(D)\). Define
		\[
		\Hom_\rho(Q,W):=
		\{t\in\Hom_\kk(Q,W)\mid \rho_g^W t=t\rho_g^Q\text{ for all }g\in G\}.
		\]
		For \(t\in \Hom_\rho(Q,W)\), define a family
		\[
		\delta t=\{(\delta t)_x\}_{x\in D}
		\]
		by
		\begin{equation}\label{eq:delta}
			(\delta t)_x:=\lambda_x^W t-t\lambda_x^Q.
		\end{equation}
		Let \(B^1(Q,W)\) be the set of all families of the form \(\delta t\), \(t\in\Hom_\rho(Q,W)\).
	\end{definition}
	
	\begin{lemma}\label{lem:B1subsetZ1}
		For every \(Q,W\in\Rep(D)\), one has
		\[
		B^1(Q,W)\subseteq Z^1(Q,W).
		\]
	\end{lemma}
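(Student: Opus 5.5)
Fix \(t\in\Hom_\rho(Q,W)\) and set \(\theta_x:=(\delta t)_x=\lambda_x^W t-t\lambda_x^Q\). The plan is to verify the three identities \eqref{eq:Z1a}--\eqref{eq:Z1c} of Definition~\ref{def:Z1} one at a time. Each verification expands both sides and uses only the representation axioms of Definition~\ref{def:representation} for \(W\) and \(Q\), together with the \(\rho\)-equivariance of \(t\). That equivariance is stated for \(\rho_g\), \(g\in G\); by Proposition~\ref{prop:rho-depends-only-on-g} it extends to \(\rho_x^W t=t\rho_x^Q\) for every \(x\in D\).

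For \eqref{eq:Z1a}, use (R1) on both \(W\) and \(Q\):
\[
\theta_{x\dashv y}=\lambda_x^W\lambda_y^W t-t\lambda_x^Q\lambda_y^Q .
\]
Adding and subtracting \(\lambda_x^W t\lambda_y^Q\) gives
\[
\lambda_x^W(\lambda_y^W t-t\lambda_y^Q)+(\lambda_x^W t-t\lambda_x^Q)\lambda_y^Q=\lambda_x^W\theta_y+\theta_x\lambda_y^Q .
\]
This is the usual Leibniz-type computation for an inner derivation, and it needs no equivariance.

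For \eqref{eq:Z1b}, use (R4):
\[
\theta_{x\vdash y}=\rho_x^W\lambda_y^W t-t\rho_x^Q\lambda_y^Q .
\]
Rewriting \(t\rho_x^Q=\rho_x^W t\) gives \(\rho_x^W(\lambda_y^W t-t\lambda_y^Q)=\rho_x^W\theta_y\).

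For \eqref{eq:Z1c}, use (R5):
\[
\theta_{x\dashv y}=\lambda_x^W\rho_y^W t-t\lambda_x^Q\rho_y^Q .
\]
Rewriting \(\rho_y^W t=t\rho_y^Q\) gives \((\lambda_x^W t-t\lambda_x^Q)\rho_y^Q=\theta_x\rho_y^Q\).

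There is no real obstacle here. The only point needing care is to pass from equivariance over \(G\) to equivariance for all \(\rho_x\), \(x\in D\), via Proposition~\ref{prop:rho-depends-only-on-g}. One should also note that \(Z^1\) and \(B^1\) are subspaces, although the inclusion itself only requires checking that each \(\delta t\) lies in \(Z^1(Q,W)\).
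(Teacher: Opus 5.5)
Your proposal is correct and follows the paper's proof essentially line by line. Both verify \eqref{eq:Z1a}, \eqref{eq:Z1b}, \eqref{eq:Z1c} directly from (R1), (R4) and (R5) for \(W\) and \(Q\), using the intertwining property of \(t\). You also make explicit, via Proposition~\ref{prop:rho-depends-only-on-g}, the passage from \(G\)-equivariance to equivariance for every \(\rho_x\), \(x\in D\), which the paper leaves implicit.
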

	
	\begin{proof}
		Let \(t\in \Hom_\rho(Q,W)\), and define \(\theta:=\delta t\) by \eqref{eq:delta}. We verify the
		three cocycle identities.
		
		\smallskip
		\noindent
		\textbf{Verification of \eqref{eq:Z1a}.}
		Using axiom \emph{(R1)} for \(W\) and \(Q\), we compute
		\begin{align*}
			\theta_{x\dashv y}
			&=\lambda_{x\dashv y}^W t-t\lambda_{x\dashv y}^Q\\
			&=\lambda_x^W\lambda_y^W t-t\lambda_x^Q\lambda_y^Q\\
			&=\lambda_x^W(\lambda_y^W t-t\lambda_y^Q)+(\lambda_x^W t-t\lambda_x^Q)\lambda_y^Q\\
			&=\lambda_x^W\theta_y+\theta_x\lambda_y^Q.
		\end{align*}
		
		\smallskip
		\noindent
		\textbf{Verification of \eqref{eq:Z1b}.}
		Using axiom \emph{(R4)} for \(W\) and \(Q\), and the fact that \(t\) intertwines the \(\rho\)-action,
		we obtain
		\begin{align*}
			\theta_{x\vdash y}
			&=\lambda_{x\vdash y}^W t-t\lambda_{x\vdash y}^Q\\
			&=\rho_x^W\lambda_y^W t-t\rho_x^Q\lambda_y^Q\\
			&=\rho_x^W\lambda_y^W t-\rho_x^W t\lambda_y^Q\\
			&=\rho_x^W(\lambda_y^W t-t\lambda_y^Q)\\
			&=\rho_x^W\theta_y.
		\end{align*}
		
		\smallskip
		\noindent
		\textbf{Verification of \eqref{eq:Z1c}.}
		Using axiom \emph{(R5)} for \(W\) and \(Q\), again together with the intertwining property of \(t\),
		we compute
		\begin{align*}
			\theta_{x\dashv y}
			&=\lambda_{x\dashv y}^W t-t\lambda_{x\dashv y}^Q\\
			&=\lambda_x^W\rho_y^W t-t\lambda_x^Q\rho_y^Q\\
			&=\lambda_x^W t\rho_y^Q-t\lambda_x^Q\rho_y^Q\\
			&=(\lambda_x^W t-t\lambda_x^Q)\rho_y^Q\\
			&=\theta_x\rho_y^Q.
		\end{align*}
		
		Therefore \(\theta\in Z^1(Q,W)\), and hence \(B^1(Q,W)\subseteq Z^1(Q,W)\).
	\end{proof}
	
	\begin{lemma}\label{lem:change-splitting}
		Let \eqref{eq:ses-main} be a short exact sequence in \(\Rep(D)\), let \(s:Q\to V\) be a
		\(\rho\)-equivariant splitting, and let \(\theta\in Z^1(Q,W)\) be the corresponding cocycle.
		For \(t\in\Hom_\rho(Q,W)\), define
		\[
		s_t:=s+\iota t.
		\]
		Then \(s_t\) is again \(\rho\)-equivariant, and the cocycle associated with \(s_t\) is
		\[
		\theta+\delta t.
		\]
	\end{lemma}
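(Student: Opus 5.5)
The plan is to verify the two claims directly from the definitions. The only inputs are the facts that \(\iota\) and \(\pi\) are morphisms in \(\Rep(D)\) and the block description of Lemma~\ref{lem:block-forms}.

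\textbf{Step 1: \(s_t\) is a splitting.} Since \(\pi\circ\iota=0\), we get
\[
\pi\circ s_t=\pi\circ s+\pi\circ\iota\circ t=\id_Q .
\]

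\textbf{Step 2: \(s_t\) is \(\rho\)-equivariant.} For \(x=(g,\alpha)\in D\) we have \(\rho_x=\rho_g\) on each of \(V,W,Q\), by Proposition~\ref{prop:rho-depends-only-on-g}. The map \(\iota\) intertwines \(\rho\), and \(t\in\Hom_\rho(Q,W)\) intertwines \(\rho_g\) for all \(g\in G\), hence \(\rho_x\) for all \(x\in D\). Therefore
\[
\rho_x\,\iota\, t=\iota\,\rho_x^W t=\iota\, t\,\rho_x^Q .
\]
Adding this to \(\rho_x s=s\rho_x^Q\) gives \(\rho_x s_t=s_t\rho_x^Q\).

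\textbf{Step 3: the new cocycle.} This is the main computation. By Lemma~\ref{lem:block-forms}, the cocycle \(\theta\) attached to \(s\) is characterized by
\[
\lambda_x(s(q))=\iota(\theta_x q)+s(\lambda_x^Q q)\qquad(q\in Q).
\]
The cocycle \(\theta'\) attached to \(s_t\) is characterized in the same way with \(s_t\) in place of \(s\). So I will expand
\[
\lambda_x(s_t(q))=\lambda_x(s(q))+\lambda_x(\iota(t q)),
\]
and use \(\lambda_x\iota=\iota\lambda_x^W\) on the second term. Then I substitute \(s=s_t-\iota t\) in the term \(s(\lambda_x^Q q)\). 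Collecting terms gives
\[
\lambda_x(s_t(q))=\iota\bigl((\theta_x+\lambda_x^W t-t\lambda_x^Q)q\bigr)+s_t(\lambda_x^Q q).
\]
The decomposition \(V=\iota(W)\oplus s_t(Q)\) is direct and unique, so the \(W\)-component can be read off. It is \(\theta'_x=\theta_x+(\delta t)_x\), by \eqref{eq:delta}.

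The main point of care is bookkeeping rather than difficulty. The decomposition changes from \(s\) to \(s_t\), so the \(Q\)-component must be re-expressed through \(s_t\) before reading off the \(W\)-part. Doing this substitution is exactly what produces the term \(-t\lambda_x^Q\) with the sign matching \eqref{eq:delta}.
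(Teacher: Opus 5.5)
Your proposal is correct and follows the paper's proof: Steps 1 and 2 are the same as the paper's, and your substitution $s=s_t-\iota t$ in Step 3 is the computation the paper writes as conjugating the block matrix of $\lambda_x$ by $\Phi_t(w,q)=(w+t(q),q)$. Your elementwise version has the advantage of justifying the direction of the coordinate change ($\Phi_t^{-1}(\cdot)\Phi_t$), which the paper only asserts.
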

	
	\begin{proof}
		First observe that
		\[
		\pi s_t=\pi s+\pi\iota\,t=\id_Q,
		\]
		so \(s_t\) is again a linear section of \(\pi\). Next, for every \(g\in G\),
		\begin{align*}
			\rho_g s_t
			&=\rho_g s+\rho_g\iota t\\
			&=s\rho_g^Q+\iota\rho_g^W t\\
			&=s\rho_g^Q+\iota t\rho_g^Q\\
			&=(s+\iota t)\rho_g^Q\\
			&=s_t\rho_g^Q,
		\end{align*}
		because \(s\) is \(\rho\)-equivariant and \(t\in\Hom_\rho(Q,W)\). Since \(\rho_x\) depends only on
		the group component, this proves that \(s_t\) is \(\rho\)-equivariant for every \(x\in D\).
		
		Let \(\Phi_t:W\oplus Q\to W\oplus Q\) be the linear automorphism
		\[
		\Phi_t(w,q)=(w+t(q),\,q).
		\]
		Then the block matrix of \(\lambda_x\) with respect to the splitting \(s_t\) is
		\[
		\Phi_t^{-1}
		\begin{pmatrix}
			\lambda_x^W & \theta_x\\
			0 & \lambda_x^Q
		\end{pmatrix}
		\Phi_t.
		\]
		A direct computation gives
		\[
		\Phi_t^{-1}
		\begin{pmatrix}
			\lambda_x^W & \theta_x\\
			0 & \lambda_x^Q
		\end{pmatrix}
		\Phi_t
		=
		\begin{pmatrix}
			\lambda_x^W & \theta_x+\lambda_x^W t-t\lambda_x^Q\\
			0 & \lambda_x^Q
		\end{pmatrix}.
		\]
		Hence the new off-diagonal block is
		\[
		\theta_x+(\delta t)_x.
		\]
		Therefore the cocycle associated with \(s_t\) is \(\theta+\delta t\).
	\end{proof}
	
	\subsection{\texorpdfstring{The cocycle model for \(\Ext^1\)}{The cocycle model for Ext1}}
	
	We may now identify \(\Ext^1\) with the quotient \(Z^1/B^1\).
	
	\begin{theorem}\label{thm:Ext1-cocycle}
		Let \(Q,W\in\Rep(D)\). Under the standing hypothesis that \(G\) is finite and
		\(\operatorname{char}(\kk)=0\), there is a bijection
		\[
		\Ext^1_{\Rep(D)}(Q,W)\cong Z^1(Q,W)/B^1(Q,W).
		\]
		Moreover, the zero class corresponds exactly to split short exact sequences.
	\end{theorem}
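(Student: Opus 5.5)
We will identify \(\Ext^1_{\Rep(D)}(Q,W)\) with the set of equivalence classes of short exact sequences \eqref{eq:ses-main} (Yoneda description, valid by Corollary~\ref{cor:abelian}). We then construct mutually inverse maps between this set and \(Z^1(Q,W)/B^1(Q,W)\).

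\textbf{Forward map.} Given an extension \eqref{eq:ses-main}, we choose a \(\rho\)-equivariant section \(s\) by Proposition~\ref{prop:rho-splitting}. Lemma~\ref{lem:block-forms} and Lemma~\ref{lem:cocycle-identities} then produce a cocycle \(\theta\in Z^1(Q,W)\). We need this class to be well defined in two respects.

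First, it must not depend on the choice of \(s\). Any two \(\rho\)-equivariant sections \(s,s'\) satisfy \(\pi(s'-s)=0\), so \(s'-s=\iota t\) for a unique linear \(t:Q\to W\). Since \(\iota\) is injective and \(\rho\)-equivariant, and both sections are \(\rho\)-equivariant, we get \(t\in\Hom_\rho(Q,W)\). Lemma~\ref{lem:change-splitting} then shows that the cocycle changes by \(\delta t\in B^1(Q,W)\).

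Second, the class must depend only on the equivalence class of the extension. If \(f:V\to V'\) is an isomorphism of extensions compatible with \(\iota,\iota'\) and \(\pi,\pi'\), then \(fs\) is a \(\rho\)-equivariant section of \(\pi'\). Because \(f\lambda_x=\lambda'_xf\) and \(f\) restricts to the identity on \(W\) and induces the identity on \(Q\), the off-diagonal blocks computed with \(fs\) coincide with those computed with \(s\).

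\textbf{Inverse map.} Given \(\theta\in Z^1(Q,W)\), we put on \(V_\theta=W\oplus Q\) the operators
\[
\rho_x=\begin{pmatrix}\rho^W_x&0\\0&\rho^Q_x\end{pmatrix},
\qquad
\lambda_x=\begin{pmatrix}\lambda^W_x&\theta_x\\0&\lambda^Q_x\end{pmatrix}.
\]
We must check the five axioms of Definition~\ref{def:representation}. (R2) and (R3) hold blockwise, and \(\rho_x\) is invertible because its diagonal blocks are. The diagonal blocks of (R1), (R4), (R5) hold because \(W\) and \(Q\) are representations. The upper-right blocks of (R1), (R4), (R5) are exactly \eqref{eq:Z1a}, \eqref{eq:Z1b}, \eqref{eq:Z1c}; this is the computation in Lemma~\ref{lem:cocycle-identities} read backwards. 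The inclusion and projection are morphisms, so \(V_\theta\) is an extension of \(Q\) by \(W\), and the section \(q\mapsto(0,q)\) is \(\rho\)-equivariant with cocycle \(\theta\).

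If \(\theta'=\theta+\delta t\) with \(t\in\Hom_\rho(Q,W)\), the map \(\Phi_t\) of Lemma~\ref{lem:change-splitting} gives an equivalence \(V_{\theta'}\cong V_\theta\). It commutes with \(\rho\) because \(t\) is \(\rho\)-equivariant, and with \(\lambda\) by the conjugation formula in that lemma. So the inverse map descends to classes.

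\textbf{Mutual inverses.} Starting from an extension \(V\), the map \((w,q)\mapsto\iota(w)+s(q)\) is an isomorphism \(V_\theta\to V\) of extensions, by the block description in Lemma~\ref{lem:block-forms}. Starting from \(\theta\), the construction returns \(\theta\) itself. Hence the two maps are inverse bijections.

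\textbf{Zero class.} If \(\theta=\delta t\), the section \(s_{-t}\) has cocycle \(0\), so its image is a subrepresentation complementary to \(W\) and the sequence splits (see the remark following Lemma~\ref{lem:block-forms}). Conversely, a splitting \(\sigma\) in \(\Rep(D)\) is in particular \(\rho\)-equivariant and has cocycle \(0\), so by well-definedness the class is zero.

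\textbf{Main obstacle.} The main obstacle is the well-definedness step, specifically that every \(\rho\)-equivariant section differs from \(s\) by a \(\rho\)-equivariant \(t\). This is what makes \(B^1\) the correct space of coboundaries, and it is the one point where Proposition~\ref{prop:rho-splitting} and the hypothesis on \(G\) are genuinely used. If one also wants additivity (a group isomorphism rather than a bijection), one would compare the Baer sum with the sum of cocycles via the standard pullback–pushout construction; for the stated bijection this comparison is not needed.
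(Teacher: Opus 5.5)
Your proposal is correct and follows essentially the same route as the paper: the Yoneda description, a cocycle from a $\rho$-equivariant section, independence of the section via Lemma~\ref{lem:change-splitting}, the explicit extension $V_\theta$, descent along $\Phi_t$, mutual inverseness, and the characterization of the zero class. You also make explicit two points the paper leaves implicit: that equivalent extensions yield the same cocycle (via the section $fs$), and that $\Phi_t$ intertwines the $\rho$-operators because $t\in\Hom_\rho(Q,W)$, not merely because the $\rho$-operators are block diagonal.
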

	
	\begin{proof}
		By Corollary~\ref{cor:Ext-defined}, the group \(\Ext^1_{\Rep(D)}(Q,W)\) is defined in the abelian
		category \(\Rep(D)\), which has enough injectives and projectives. We use the Yoneda interpretation
		of \(\Ext^1\) (\cite[Section~3.4]{Weibel1994} or \cite{Yoneda1960}), namely that
		\(\Ext^1_{\Rep(D)}(Q,W)\) classifies short exact sequences
		\[
		0\to W\to V\to Q\to 0
		\]
		up to the usual equivalence of extensions.
		
		\smallskip
		\noindent
		\textbf{Step 1: from an extension to a cocycle class.}
		Let
		\[
		0\to W \stackrel{\iota}{\to} V \stackrel{\pi}{\to} Q\to 0
		\]
		be a short exact sequence in \(\Rep(D)\). By Proposition~\ref{prop:rho-splitting}, there exists a
		\(\rho\)-equivariant splitting \(s:Q\to V\). The corresponding family
		\[
		\theta=\{\theta_x\}_{x\in D}
		\]
		obtained from Lemma~\ref{lem:block-forms} belongs to \(Z^1(Q,W)\) by
		Lemma~\ref{lem:cocycle-identities}. We assign to the extension the class
		\[
		[\theta]\in Z^1(Q,W)/B^1(Q,W).
		\]
		
		\smallskip
		\noindent
		\textbf{Step 2: independence of the choice of splitting.}
		If \(s'\) is another \(\rho\)-equivariant splitting, then
		\[
		s'=s+\iota t
		\]
		for a unique \(t\in \Hom_\kk(Q,W)\). Since both \(s\) and \(s'\) are \(\rho\)-equivariant, the map
		\(t\) lies in \(\Hom_\rho(Q,W)\). By Lemma~\ref{lem:change-splitting}, the cocycle attached to \(s'\)
		is
		\[
		\theta+\delta t.
		\]
		Thus the class of the cocycle in \(Z^1(Q,W)/B^1(Q,W)\) is independent of the chosen
		\(\rho\)-equivariant splitting.
		
		\smallskip
		\noindent
		\textbf{Step 3: from a cocycle to an extension.}
		Conversely, let \(\theta\in Z^1(Q,W)\). Define a \(\kk\)-vector space
		\[
		V_\theta:=W\oplus Q
		\]
		and define operators on \(V_\theta\) by
		\[
		\rho_x=
		\begin{pmatrix}
			\rho_x^W & 0\\
			0 & \rho_x^Q
		\end{pmatrix},
		\qquad
		\lambda_x=
		\begin{pmatrix}
			\lambda_x^W & \theta_x\\
			0 & \lambda_x^Q
		\end{pmatrix}.
		\]
		We claim that these operators define a representation of \(D\).
		
		Indeed, \(\rho_x\in\Aut(V_\theta)\) because \(\rho_x^W\) and \(\rho_x^Q\) are invertible. Axiom
		\emph{(R2)} follows immediately from block diagonality and the fact that \(W\) and \(Q\) are already
		representations:
		\[
		\rho_{x\vdash y}
		=
		\begin{pmatrix}
			\rho_{x\vdash y}^W & 0\\
			0 & \rho_{x\vdash y}^Q
		\end{pmatrix}
		=
		\begin{pmatrix}
			\rho_x^W\rho_y^W & 0\\
			0 & \rho_x^Q\rho_y^Q
		\end{pmatrix}
		=
		\rho_x\rho_y.
		\]
		Similarly, \(\rho_e=\id\) follows from \(\rho_e^W=\id_W\) and \(\rho_e^Q=\id_Q\).
		
		For axiom \emph{(R1)}, we compute
		\[
		\lambda_x\lambda_y=
		\begin{pmatrix}
			\lambda_x^W\lambda_y^W &
			\lambda_x^W\theta_y+\theta_x\lambda_y^Q\\
			0 &
			\lambda_x^Q\lambda_y^Q
		\end{pmatrix}.
		\]
		Since \(\theta\in Z^1(Q,W)\), identity \eqref{eq:Z1a} gives
		\[
		\lambda_x^W\theta_y+\theta_x\lambda_y^Q=\theta_{x\dashv y},
		\]
		while \(W\) and \(Q\) satisfy \emph{(R1)}. Hence
		\[
		\lambda_x\lambda_y=
		\begin{pmatrix}
			\lambda_{x\dashv y}^W & \theta_{x\dashv y}\\
			0 & \lambda_{x\dashv y}^Q
		\end{pmatrix}
		=
		\lambda_{x\dashv y}.
		\]
		
		For axiom \emph{(R4)},
		\[
		\rho_x\lambda_y=
		\begin{pmatrix}
			\rho_x^W\lambda_y^W & \rho_x^W\theta_y\\
			0 & \rho_x^Q\lambda_y^Q
		\end{pmatrix}.
		\]
		Since \(\theta\) satisfies \eqref{eq:Z1b}, and \(W,Q\) satisfy \emph{(R4)}, we obtain
		\[
		\rho_x\lambda_y=
		\begin{pmatrix}
			\lambda_{x\vdash y}^W & \theta_{x\vdash y}\\
			0 & \lambda_{x\vdash y}^Q
		\end{pmatrix}
		=
		\lambda_{x\vdash y}.
		\]
		
		For axiom \emph{(R5)},
		\[
		\lambda_x\rho_y=
		\begin{pmatrix}
			\lambda_x^W\rho_y^W & \theta_x\rho_y^Q\\
			0 & \lambda_x^Q\rho_y^Q
		\end{pmatrix}.
		\]
		Using \eqref{eq:Z1c} and the fact that \(W,Q\) satisfy \emph{(R5)}, we deduce
		\[
		\lambda_x\rho_y=
		\begin{pmatrix}
			\lambda_{x\dashv y}^W & \theta_{x\dashv y}\\
			0 & \lambda_{x\dashv y}^Q
		\end{pmatrix}
		=
		\lambda_{x\dashv y}.
		\]
		Thus \(V_\theta\) is a representation of \(D\). The canonical maps
		\[
		0\to W\to W\oplus Q\to Q\to 0
		\]
		define a short exact sequence in \(\Rep(D)\).
		
		\smallskip
		\noindent
		\textbf{Step 4: passage to the quotient by \(B^1\).}
		If \(\theta'=\theta+\delta t\) for some \(t\in\Hom_\rho(Q,W)\), define
		\[
		\Phi_t:W\oplus Q\to W\oplus Q,
		\qquad
		\Phi_t(w,q)=(w+t(q),q).
		\]
		Then \(\Phi_t\) is the identity on the subobject \(W\) and induces the identity on the quotient
		\(Q\). Moreover, the \(\rho\)-operators are block diagonal, so \(\Phi_t\) trivially intertwines the
		\(\rho\)-actions. By the computation in Lemma~\ref{lem:change-splitting}, \(\Phi_t\) conjugates the
		\(\lambda\)-operators attached to \(\theta\) to those attached to \(\theta'\). Therefore
		\(\Phi_t\) is an isomorphism of extensions. Hence cocycles differing by an element of
		\(B^1(Q,W)\) define equivalent extensions.
		
		\smallskip
		\noindent
		\textbf{Step 5: the two constructions are inverse.}
		Start with an extension
		\[
		0\to W\to V\to Q\to 0
		\]
		and choose a \(\rho\)-equivariant splitting \(s\). The associated cocycle \(\theta\) records the
		off-diagonal block of the \(\lambda\)-operators with respect to the decomposition
		\(V\simeq W\oplus Q\) induced by \(s\). Reconstructing the extension from \(\theta\) yields exactly
		the same block matrices for \(\lambda\) and \(\rho\), and therefore an equivalent extension.
		
		Conversely, start with \(\theta\in Z^1(Q,W)\). In the extension
		\[
		0\to W\to V_\theta\to Q\to 0,
		\]
		the obvious section
		\[
		s_0:Q\to V_\theta,
		\qquad
		s_0(q)=(0,q),
		\]
		is \(\rho\)-equivariant, and the off-diagonal block of \(\lambda_x\) with respect to the resulting
		decomposition \(V_\theta=W\oplus s_0(Q)\) is precisely \(\theta_x\). Thus the cocycle recovered from
		this extension is exactly \(\theta\).
		
		Hence the two constructions are inverse to each other, and we obtain a bijection
		\[
		\Ext^1_{\Rep(D)}(Q,W)\cong Z^1(Q,W)/B^1(Q,W).
		\]
		
		\smallskip
		\noindent
		\textbf{Step 6: characterization of split extensions.}
		Suppose first that the extension corresponding to \(\theta\) is split in \(\Rep(D)\). Then there
		exists a splitting which is both \(\rho\)-equivariant and \(\lambda\)-equivariant. With respect to
		such a splitting, all off-diagonal blocks vanish, so the corresponding cocycle is zero. Hence
		\([\theta]=0\) in \(Z^1(Q,W)/B^1(Q,W)\).
		
		Conversely, assume that \([\theta]=0\). Then \(\theta=\delta t\) for some \(t\in\Hom_\rho(Q,W)\).
		By Lemma~\ref{lem:change-splitting}, replacing the chosen \(\rho\)-equivariant splitting \(s\) by
		\[
		s' = s-\iota t
		\]
		changes the cocycle from \(\theta\) to
		\[
		\theta-\delta t=0.
		\]
		Hence the new splitting is both \(\rho\)-equivariant and \(\lambda\)-equivariant, so the extension
		splits in \(\Rep(D)\).
		
		Therefore the zero class in \(Z^1(Q,W)/B^1(Q,W)\) corresponds exactly to split short exact
		sequences.
	\end{proof}
	
	\begin{remark}
		Theorem~\ref{thm:Ext1-cocycle} is the precise form of the failure of Maschke semisimplicity in the
		representation theory of generalized digroups. Classical Maschke theory annihilates the obstruction
		on the group side, but the off-diagonal \(\lambda\)-data survives and is measured by an explicit
		cohomology class.
	\end{remark}
	
	\section{Explicit nonsplit extensions}
	\label{sec:examples}
	
	The abstract cohomological description obtained in the previous section becomes more meaningful
	once one sees it in a concrete example. The purpose of this section is therefore to construct an
	explicit nonsplit extension in \(\Rep(D)\), showing that the category need not be semisimple even
	when the group side lies in a classical Maschke regime. This example also makes the cocycle
	description of \(\Ext^1\) completely tangible.
	
	\subsection{The underlying generalized digroup}
	
	Let
	\[
	G=C_2=\{1,s\},
	\qquad
	E=\{e_0,e_1\},
	\]
	and let \(G\) act trivially on \(E\), that is,
	\[
	g\bullet e_i=e_i
	\qquad
	(g\in G,\ i\in\{0,1\}).
	\]
	By Construction~\ref{cons:product-model} and Proposition~\ref{prop:product-gd}, the set
	\[
	D:=G\times E
	\]
	is a generalized digroup with operations
	\begin{equation}\label{eq:C2E-ops}
		(g,e_i)\vdash(h,e_j)=(gh,e_j),
		\qquad
		(g,e_i)\dashv(h,e_j)=(gh,e_i),
	\end{equation}
	and halo
	\[
	\overline{E}=\{(1,e_0),(1,e_1)\}.
	\]
	
	Since \(G\) is finite and \(\operatorname{char}(\kk)=0\), the standing hypothesis of
	Section~\ref{sec:cocycle} is satisfied.
	
	\subsection{A two-dimensional representation}
	
	Let \(V=\kk^2\), with ordered basis
	\[
	v_1=\binom{1}{0},
	\qquad
	v_2=\binom{0}{1}.
	\]
	Let
	\[
	\chi:G\to \kk^\times
	\]
	be the nontrivial character of \(C_2\), given by
	\[
	\chi(1)=1,
	\qquad
	\chi(s)=-1.
	\]
	
	Define two matrices
	\[
	P_0=
	\begin{pmatrix}
		1&0\\
		0&0
	\end{pmatrix},
	\qquad
	P_1=
	\begin{pmatrix}
		1&0\\
		1&0
	\end{pmatrix}.
	\]
	
	\begin{lemma}\label{lem:P-band}
		The matrices \(P_0\) and \(P_1\) satisfy
		\[
		P_iP_j=P_i
		\qquad
		(i,j\in\{0,1\}).
		\]
		In particular,
		\[
		P_0^2=P_0,
		\qquad
		P_1^2=P_1,
		\qquad
		P_0P_1=P_0,
		\qquad
		P_1P_0=P_1.
		\]
	\end{lemma}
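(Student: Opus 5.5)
The plan is a direct computation, organized so that all four identities follow from one structural observation. Both \(P_0\) and \(P_1\) have the form
\[
P_i=u_i\,v_1^{\top},\qquad u_0=\binom{1}{0},\quad u_1=\binom{1}{1},
\]
where \(v_1^{\top}=(1\ \ 0)\) is the first-coordinate functional. In other words, each \(P_i\) is a rank-one matrix whose image is spanned by \(u_i\) and whose kernel is \(\Span(v_2)\). This holds because in each case the second column is zero and the first column is \(u_i\).

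Given this factorization, I would compute
\[
P_iP_j=u_i\,(v_1^{\top}u_j)\,v_1^{\top}.
\]
The middle factor \(v_1^{\top}u_j\) is the first coordinate of \(u_j\), which equals \(1\) for both \(j=0\) and \(j=1\). Hence \(P_iP_j=u_iv_1^{\top}=P_i\) for all \(i,j\in\{0,1\}\). This is exactly the claimed left-zero band law. The four displayed identities \(P_0^2=P_0\), \(P_1^2=P_1\), \(P_0P_1=P_0\) and \(P_1P_0=P_1\) are then just the four specializations of \((i,j)\).

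As an independent sanity check, I would also multiply the \(2\times 2\) matrices explicitly in the two mixed cases. For instance, the first column of \(P_1P_0\) is \(P_1\binom{1}{0}=\binom{1}{1}\) and its second column is \(P_1\cdot 0=0\), so \(P_1P_0=P_1\).

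There is no real obstacle here. The only point needing care is verifying that both \(P_0\) and \(P_1\) act trivially on \(v_2\) and send \(v_1\) to a vector with first coordinate \(1\); everything else is formal.
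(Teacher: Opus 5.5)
Your proof is correct. The paper multiplies out the four \(2\times 2\) products \(P_0^2\), \(P_1^2\), \(P_0P_1\), \(P_1P_0\) entry by entry and reads off \(P_iP_j=P_i\) in each case. You instead factor each matrix as a rank-one outer product \(P_i=u_iv_1^{\top}\), which reduces all four identities to the single scalar fact \(v_1^{\top}u_j=1\).

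The paper's route is maximally explicit and needs no setup, but it checks the band law case by case without explaining why it holds. Your route explains it. Any family \(u_i\varphi\) sharing one functional \(\varphi\), with \(\varphi(u_j)=1\) for all \(j\), is automatically a left-zero band of idempotents with common kernel \(\ker\varphi\). This shows how to build analogous examples for a larger halo: choose more vectors with first coordinate \(1\) to realize the relations \(\varepsilon_\alpha\varepsilon_\beta=\varepsilon_\alpha\) of \(B_E\). It also shows that the common kernel \(\Span\{v_2\}\) is exactly the subrepresentation \(W\) used later, on which every \(\lambda_x\) vanishes.
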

	
	\begin{proof}
		A direct matrix computation gives
		\[
		P_0^2=
		\begin{pmatrix}
			1&0\\
			0&0
		\end{pmatrix}
		\begin{pmatrix}
			1&0\\
			0&0
		\end{pmatrix}
		=
		\begin{pmatrix}
			1&0\\
			0&0
		\end{pmatrix}
		=P_0,
		\]
		and
		\[
		P_1^2=
		\begin{pmatrix}
			1&0\\
			1&0
		\end{pmatrix}
		\begin{pmatrix}
			1&0\\
			1&0
		\end{pmatrix}
		=
		\begin{pmatrix}
			1&0\\
			1&0
		\end{pmatrix}
		=P_1.
		\]
		Similarly,
		\[
		P_0P_1=
		\begin{pmatrix}
			1&0\\
			0&0
		\end{pmatrix}
		\begin{pmatrix}
			1&0\\
			1&0
		\end{pmatrix}
		=
		\begin{pmatrix}
			1&0\\
			0&0
		\end{pmatrix}
		=P_0,
		\]
		and
		\[
		P_1P_0=
		\begin{pmatrix}
			1&0\\
			1&0
		\end{pmatrix}
		\begin{pmatrix}
			1&0\\
			0&0
		\end{pmatrix}
		=
		\begin{pmatrix}
			1&0\\
			1&0
		\end{pmatrix}
		=P_1.
		\]
		Hence \(P_iP_j=P_i\) for all \(i,j\in\{0,1\}\).
	\end{proof}
	
	We now define maps
	\[
	\rho:D\to \Aut(V),
	\qquad
	\lambda:D\to \End(V),
	\]
	by
	\begin{equation}\label{eq:rho-example}
		\rho_{(g,e_i)}:=\chi(g)\,I_2,
	\end{equation}
	and
	\begin{equation}\label{eq:lambda-example}
		\lambda_{(g,e_0)}:=\chi(g)\,P_0,
		\qquad
		\lambda_{(g,e_1)}:=\chi(g)\,P_1.
	\end{equation}
	
	\begin{proposition}\label{prop:example-representation}
		The triple \((V,\lambda,\rho)\) is a representation of the generalized digroup \(D\).
	\end{proposition}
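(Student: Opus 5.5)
The plan is to verify directly the five identities of Definition~\ref{def:representation} together with invertibility of \(\rho\). Invertibility is immediate, since \(\rho_{(g,e_i)}=\chi(g)I_2\) and \(\chi(g)=\pm1\). Throughout, we write \(x=(g,e_i)\) and \(y=(h,e_j)\). The multiplication rules \eqref{eq:C2E-ops} give
\[
x\vdash y=(gh,e_j),
\qquad
x\dashv y=(gh,e_i).
\]
We will use two facts repeatedly. First, \(\chi\) is multiplicative. Second, scalar matrices commute with everything.

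The \(\rho\)-axioms come first. We have
\[
\rho_{x\vdash y}=\chi(gh)I_2=\chi(g)\chi(h)I_2=\rho_x\rho_y.
\]
For a bar-unit \(e=(1,e_i)\), we get \(\rho_e=\chi(1)I_2=I_2\). Next comes the mixed axiom (R4):
\[
\rho_x\lambda_y=\chi(g)\chi(h)P_j=\chi(gh)P_j=\lambda_{(gh,e_j)}=\lambda_{x\vdash y}.
\]
Here it matters that the halo coordinate of \(x\vdash y\) is \(e_j\), the coordinate of \(y\). This holds because the action of \(G\) on \(E\) is trivial.

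For the \(\lambda\)-axioms we use Lemma~\ref{lem:P-band}. For (R1),
\[
\lambda_x\lambda_y=\chi(g)\chi(h)P_iP_j=\chi(gh)P_i=\lambda_{(gh,e_i)}=\lambda_{x\dashv y},
\]
where the band identity \(P_iP_j=P_i\) is exactly what matches the rule that \(\dashv\) retains the left halo coordinate. For (R5),
\[
\lambda_x\rho_y=\chi(g)P_i\,\chi(h)I_2=\chi(gh)P_i=\lambda_{x\dashv y}.
\]

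No step is a genuine obstacle; the only point needing care is bookkeeping of which halo index survives in each product. One must check it against \eqref{eq:C2E-ops}: \(\vdash\) keeps the right index and \(\dashv\) keeps the left one. One must also check that the multiplication order in \(P_iP_j=P_i\) is the one required by (R1).
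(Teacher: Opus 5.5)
Your proposal is correct and follows the paper's proof essentially line by line. Like the paper, you write \(x=(g,e_i)\), \(y=(h,e_j)\), use \(x\vdash y=(gh,e_j)\) and \(x\dashv y=(gh,e_i)\), the multiplicativity of \(\chi\), the scalarity of \(\rho\), and the band identity \(P_iP_j=P_i\) of Lemma~\ref{lem:P-band} for (R1); your explicit check that \(\rho_x=\pm I_2\) is invertible is a small point the paper leaves implicit.
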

	
	\begin{proof}
		We verify the five axioms of Definition~\ref{def:representation}.
		
		Let
		\[
		x=(g,e_i),
		\qquad
		y=(h,e_j)
		\]
		be arbitrary elements of \(D\), with \(i,j\in\{0,1\}\).
		
		\smallskip
		\noindent
		\textbf{Verification of \emph{(R1)}.}
		Since the action of \(G\) on \(E\) is trivial, \eqref{eq:C2E-ops} gives
		\[
		x\dashv y=(gh,e_i).
		\]
		Therefore
		\[
		\lambda_{x\dashv y}
		=
		\lambda_{(gh,e_i)}
		=
		\chi(gh)\,P_i.
		\]
		On the other hand,
		\[
		\lambda_x\lambda_y
		=
		(\chi(g)P_i)(\chi(h)P_j)
		=
		\chi(g)\chi(h)\,P_iP_j.
		\]
		By Lemma~\ref{lem:P-band},
		\[
		P_iP_j=P_i,
		\]
		hence
		\[
		\lambda_x\lambda_y
		=
		\chi(gh)\,P_i
		=
		\lambda_{x\dashv y}.
		\]
		
		\smallskip
		\noindent
		\textbf{Verification of \emph{(R2)}.}
		Again by \eqref{eq:C2E-ops},
		\[
		x\vdash y=(gh,e_j).
		\]
		Thus
		\[
		\rho_{x\vdash y}
		=
		\rho_{(gh,e_j)}
		=
		\chi(gh)\,I_2.
		\]
		Since \(\chi\) is a group character,
		\[
		\chi(gh)=\chi(g)\chi(h),
		\]
		and therefore
		\[
		\rho_x\rho_y
		=
		(\chi(g)I_2)(\chi(h)I_2)
		=
		\chi(g)\chi(h)\,I_2
		=
		\chi(gh)\,I_2
		=
		\rho_{x\vdash y}.
		\]
		
		\smallskip
		\noindent
		\textbf{Verification of \emph{(R3)}.}
		Let \((1,e_i)\in\overline{E}\). Then
		\[
		\rho_{(1,e_i)}=\chi(1)\,I_2=I_2=\id_V.
		\]
		
		\smallskip
		\noindent
		\textbf{Verification of \emph{(R4)}.}
		Since \(x\vdash y=(gh,e_j)\), we have
		\[
		\lambda_{x\vdash y}=\chi(gh)\,P_j.
		\]
		On the other hand,
		\[
		\rho_x\lambda_y
		=
		(\chi(g)I_2)(\chi(h)P_j)
		=
		\chi(gh)\,P_j
		=
		\lambda_{x\vdash y}.
		\]
		
		\smallskip
		\noindent
		\textbf{Verification of \emph{(R5)}.}
		Since \(x\dashv y=(gh,e_i)\), we have
		\[
		\lambda_{x\dashv y}=\chi(gh)\,P_i.
		\]
		On the other hand,
		\[
		\lambda_x\rho_y
		=
		(\chi(g)P_i)(\chi(h)I_2)
		=
		\chi(gh)\,P_i
		=
		\lambda_{x\dashv y}.
		\]
		
		All five axioms are satisfied. Hence \((V,\lambda,\rho)\) is a representation of \(D\).
	\end{proof}
	
	\subsection{A short exact sequence}
	
	Consider the one-dimensional subspace
	\[
	W:=\Span\{v_2\}\subseteq V.
	\]
	
	\begin{proposition}\label{prop:W-subrepresentation}
		The subspace \(W\) is a subrepresentation of \(V\).
	\end{proposition}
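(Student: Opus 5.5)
We must show that $W=\Span\{v_2\}$ is stable under every $\rho_x$ and every $\lambda_x$, $x\in D$. Restricting operators that preserve $W$ to $W$ automatically satisfies the identities (R1)--(R5) and keeps each $\rho_x$ invertible. So, in the sense of Definition~\ref{def:representation}, a subrepresentation is just a $\kk$-subspace invariant under all the operators. It therefore suffices to evaluate each operator on the spanning vector $v_2$ and check that the result lies in $\Span\{v_2\}$.

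For the $\rho$-side, \eqref{eq:rho-example} gives $\rho_{(g,e_i)}v_2=\chi(g)v_2\in W$ for all $g\in G$ and $i\in\{0,1\}$. The restriction $\rho_x|_W$ is multiplication by $\chi(g)=\pm1$, so it is invertible on $W$.

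For the $\lambda$-side, by \eqref{eq:lambda-example} it is enough to compute $P_0v_2$ and $P_1v_2$. Both matrices have second column equal to zero, so $P_0v_2=0$ and $P_1v_2=0$. Hence $\lambda_{(g,e_i)}v_2=\chi(g)P_iv_2=0\in W$ for every $x=(g,e_i)$. The restricted representation on $W$ is then $\rho^W_{(g,e_i)}=\chi(g)$ and $\lambda^W_x=0$. One can note explicitly that these satisfy (R1)--(R5), since every $\lambda$-identity becomes $0=0$ and the $\rho$-identities reduce to $\chi$ being a character.

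There is no real obstacle here. The only point needing care is to use the correct notion of subrepresentation, namely invariance under both families of operators together with invertibility of the restricted $\rho_x$. Invertibility is automatic because $\rho_x$ acts on $W$ by a nonzero scalar.
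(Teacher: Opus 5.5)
Your proof is correct and matches the paper's: both check that $\rho_{(g,e_i)}v_2=\chi(g)v_2\in W$ and that $\lambda_{(g,e_i)}v_2=\chi(g)P_iv_2=0$, since $P_0$ and $P_1$ have zero second column. Your remark that each restricted $\rho_x$ is invertible (it acts by the scalar $\pm1$) is a small explicit check that the paper leaves implicit.
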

	
	\begin{proof}
		Let \(x=(g,e_i)\in D\). Since \(\rho_x=\chi(g)I_2\), one has
		\[
		\rho_x(v_2)=\chi(g)v_2\in W.
		\]
		
		It remains to check stability under \(\lambda_x\). If \(i=0\), then
		\[
		\lambda_{(g,e_0)}(v_2)
		=
		\chi(g)P_0\binom{0}{1}
		=
		\chi(g)\binom{0}{0}
		=
		0\in W.
		\]
		If \(i=1\), then
		\[
		\lambda_{(g,e_1)}(v_2)
		=
		\chi(g)P_1\binom{0}{1}
		=
		\chi(g)\binom{0}{0}
		=
		0\in W.
		\]
		Thus \(W\) is stable under all \(\lambda_x\) and all \(\rho_x\), so \(W\) is a subrepresentation.
	\end{proof}
	
	Let
	\[
	Q:=V/W
	\]
	and denote by
	\[
	\overline{v}_1:=v_1+W
	\]
	the class of \(v_1\) modulo \(W\). Then \(Q\) is one-dimensional, generated by \(\overline{v}_1\),
	and we have a short exact sequence in \(\Rep(D)\):
	\begin{equation}\label{eq:example-ses}
		0\longrightarrow W \stackrel{\iota}{\longrightarrow} V \stackrel{\pi}{\longrightarrow} Q\longrightarrow 0.
	\end{equation}
	
	\subsection{The extension is nonsplit}
	
	We now show that \eqref{eq:example-ses} does not split in \(\Rep(D)\).
	
	\begin{proposition}\label{prop:example-nonsplit}
		The short exact sequence \eqref{eq:example-ses} is nonsplit in \(\Rep(D)\).
	\end{proposition}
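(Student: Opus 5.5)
We argue directly: a splitting would produce a one-dimensional subrepresentation complementary to \(W\), and we show that no such subspace exists. Suppose, for contradiction, that there is a morphism \(\sigma:Q\to V\) in \(\Rep(D)\) with \(\pi\circ\sigma=\id_Q\). Then \(\sigma(\overline{v}_1)=v_1+c\,v_2\) for some scalar \(c\in\kk\), because \(\pi(v_1)=\overline{v}_1\) and \(\ker\pi=W=\Span\{v_2\}\). The image \(U:=\Span\{v_1+c v_2\}\) would be a subrepresentation of \(V\) with \(V=W\oplus U\). We therefore only need to show that no line of the form \(\Span\{v_1+cv_2\}\) is stable under all the operators \(\lambda_x\).

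The key step is to compute the induced action on \(Q\) and compare it with the action on \(v_1+cv_2\). On \(Q\), both \(\lambda_{(1,e_0)}\) and \(\lambda_{(1,e_1)}\) act as the identity, since \(P_0v_1=v_1\) and \(P_1v_1=v_1+v_2\equiv v_1\pmod W\). Equivariance of \(\sigma\) therefore forces \(\lambda_{(1,e_i)}(v_1+cv_2)=v_1+cv_2\) for \(i=0,1\). Now compute
\[
P_0(v_1+cv_2)=v_1,
\qquad
P_1(v_1+cv_2)=v_1+v_2.
\]
The first equation gives \(c=0\), and the second gives \(c=1\). This is a contradiction, so no splitting exists.

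We could also phrase the argument cohomologically, using Theorem~\ref{thm:Ext1-cocycle}. Since \(\rho\) is scalar, the section \(s(\overline{v}_1)=v_1\) is \(\rho\)-equivariant. The resulting cocycle is \(\theta_{(g,e_0)}=0\) and \(\theta_{(g,e_1)}=\chi(g)\), identifying \(\Hom_\kk(Q,W)\) with \(\kk\). Here \(Q\) and \(W\) carry the same \(\rho\)-character \(\chi\), and \(\lambda^W=0\), while \(\lambda^Q_{(g,e_i)}=\chi(g)\). Hence every coboundary has the form \((\delta t)_x=-t\chi(g)\) with the same value at \(e_0\) and \(e_1\). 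Since \(\theta\) takes different values at \(e_0\) and \(e_1\), it is not a coboundary, and \([\theta]\neq0\).

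There is no real obstacle in either version. The only point requiring care is to compute correctly the induced actions of \(\lambda\) on the quotient \(Q\) and on the subrepresentation \(W\); everything else is a two-line matrix check. I would present the direct elementary argument as the proof and mention the cocycle computation as a remark.
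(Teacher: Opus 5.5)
Your direct argument is correct and is essentially the paper's proof: both reduce a splitting to a complement $\Span\{v_1+cv_2\}$ and play $\lambda_{(1,e_0)}$ against $\lambda_{(1,e_1)}$ to get incompatible constraints on $c$. The only difference is that you pin the eigenvalue to $1$ using equivariance of the section, whereas the paper uses only stability of the line. Your cocycle remark is also valid, and it runs the logic of Proposition~\ref{prop:beta-example} in the opposite direction: it shows directly that $\theta$ is not a coboundary, rather than deducing $[\theta]\neq 0$ from nonsplitness.
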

	
	\begin{proof}
		Suppose, for contradiction, that \eqref{eq:example-ses} splits in \(\Rep(D)\). Then there exists
		a one-dimensional subrepresentation \(U\subseteq V\) such that
		\[
		V=W\oplus U.
		\]
		Since \(W=\Span\{v_2\}\), every one-dimensional complement \(U\) of \(W\) is of the form
		\[
		U=\Span\{u\},
		\qquad
		u=v_1+a v_2
		\]
		for a unique scalar \(a\in\kk\).
		
		Because \(U\) is a subrepresentation, it must be stable under \(\lambda_{(1,e_1)}\). Now
		\[
		\lambda_{(1,e_1)}u
		=
		P_1\binom{1}{a}
		=
		\binom{1}{1}.
		\]
		Hence \(\lambda_{(1,e_1)}u\in U\), so there exists \(c\in\kk\) such that
		\[
		\binom{1}{1}=c\binom{1}{a}.
		\]
		Comparing the first coordinates gives \(c=1\), and therefore the second coordinates give
		\[
		a=1.
		\]
		Thus necessarily
		\[
		U=\Span\{v_1+v_2\}.
		\]
		
		We now impose stability under \(\lambda_{(1,e_0)}\). Since
		\[
		\lambda_{(1,e_0)}(v_1+v_2)
		=
		P_0\binom{1}{1}
		=
		\binom{1}{0}
		=
		v_1,
		\]
		stability would require \(v_1\in U\). But \(U=\Span\{v_1+v_2\}\), and \(v_1\) is not a scalar
		multiple of \(v_1+v_2\). This is impossible.
		
		The contradiction shows that no \(D\)-stable complement of \(W\) exists. Therefore
		\eqref{eq:example-ses} is nonsplit in \(\Rep(D)\).
	\end{proof}
	
	\subsection{\texorpdfstring{The corresponding class in \(\Ext^1\)}{The corresponding class in Ext1}}
	
	We finally identify the cocycle associated with the extension \eqref{eq:example-ses}.
	
	Fix the linear section
	\[
	s:Q\to V,
	\qquad
	s(\overline{v}_1)=v_1.
	\]
	Since \(\rho_{(g,e_i)}=\chi(g)I_2\) is scalar, this section is automatically \(\rho\)-equivariant.
	Thus Theorem~\ref{thm:Ext1-cocycle} applies.
	
	With respect to the decomposition
	\[
	V=W\oplus s(Q)=\Span\{v_2\}\oplus\Span\{v_1\},
	\]
	the induced actions on \(W\) and \(Q\) are as follows.
	
	\begin{lemma}\label{lem:diagonal-actions-example}
		For every \(g\in G\) and \(i\in\{0,1\}\),
		\[
		\rho_{(g,e_i)}^W=\chi(g)\id_W,
		\qquad
		\rho_{(g,e_i)}^Q=\chi(g)\id_Q,
		\]
		\[
		\lambda_{(g,e_i)}^W=0,
		\qquad
		\lambda_{(g,e_i)}^Q=\chi(g)\id_Q.
		\]
	\end{lemma}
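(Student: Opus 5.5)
The plan is to compute all four families of induced operators directly from the definitions \eqref{eq:rho-example} and \eqref{eq:lambda-example}, using the ordered decomposition \(V=W\oplus s(Q)=\Span\{v_2\}\oplus\Span\{v_1\}\). The induced action on the subrepresentation \(W\) is the restriction to \(\Span\{v_2\}\). The induced action on \(Q=V/W\) is the action on the class \(\overline{v}_1\), that is, the \(v_1\)-coordinate of the image of \(v_1\).

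\textbf{The \(\rho\)-side.} Since \(\rho_{(g,e_i)}=\chi(g)I_2\) is scalar, it acts by \(\chi(g)\) on every subspace and every quotient. Hence \(\rho^W_{(g,e_i)}=\chi(g)\id_W\) and \(\rho^Q_{(g,e_i)}=\chi(g)\id_Q\). This agrees with the block-diagonal form of Lemma~\ref{lem:block-forms}, because \(s\) is \(\rho\)-equivariant.

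\textbf{The \(\lambda\)-side on \(W\).} The computation in Proposition~\ref{prop:W-subrepresentation} already shows \(P_0v_2=P_1v_2=0\). Therefore \(\lambda^W_{(g,e_i)}(v_2)=\chi(g)P_iv_2=0\) for both \(i\), so \(\lambda^W_{(g,e_i)}=0\).

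\textbf{The \(\lambda\)-side on \(Q\).} I compute \(P_0v_1=v_1\) and \(P_1v_1=v_1+v_2\). In both cases \(\lambda_{(g,e_i)}(v_1)\equiv\chi(g)v_1\pmod W\). Passing to the quotient gives \(\lambda^Q_{(g,e_i)}(\overline{v}_1)=\chi(g)\overline{v}_1\), that is, \(\lambda^Q_{(g,e_i)}=\chi(g)\id_Q\).

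Every step is a one-line matrix evaluation, so I expect no real obstacle. The only point needing care is bookkeeping. The quotient action must be read off from the \(v_1\)-coordinate alone, and the discarded \(v_2\)-component of \(P_1v_1\) must not be confused with the induced action. That \(v_2\)-component, namely \(\chi(g)\) when \(i=1\) and \(0\) when \(i=0\), is exactly the off-diagonal datum \(\theta_{(g,e_i)}\) of Lemma~\ref{lem:block-forms}. I would record it in passing for the subsequent cocycle computation.
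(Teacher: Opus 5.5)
Your proposal is correct and follows the paper's proof essentially step for step: the scalar form of \(\rho_{(g,e_i)}\) gives the \(\rho\)-statements, the vanishing \(P_iv_2=0\) from Proposition~\ref{prop:W-subrepresentation} gives \(\lambda^W_{(g,e_i)}=0\), and \(P_0v_1=v_1\), \(P_1v_1=v_1+v_2\equiv v_1 \pmod W\) gives \(\lambda^Q_{(g,e_i)}=\chi(g)\id_Q\). Your remark that the discarded \(v_2\)-component is the off-diagonal datum is also accurate; it is exactly the family \(\beta\) computed in Proposition~\ref{prop:beta-example}.
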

	
	\begin{proof}
		The formulas for \(\rho^W\) and \(\rho^Q\) are immediate from the fact that \(\rho_{(g,e_i)}\)
		is the scalar matrix \(\chi(g)I_2\).
		
		For \(\lambda^W\), we have already seen in the proof of Proposition~\ref{prop:W-subrepresentation}
		that \(\lambda_{(g,e_i)}(v_2)=0\). Hence
		\[
		\lambda_{(g,e_i)}^W=0.
		\]
		
		For \(\lambda^Q\), we compute modulo \(W\). If \(i=0\), then
		\[
		\lambda_{(g,e_0)}(v_1)
		=
		\chi(g)P_0\binom{1}{0}
		=
		\chi(g)\binom{1}{0},
		\]
		so
		\[
		\lambda_{(g,e_0)}^Q(\overline{v}_1)=\chi(g)\overline{v}_1.
		\]
		If \(i=1\), then
		\[
		\lambda_{(g,e_1)}(v_1)
		=
		\chi(g)P_1\binom{1}{0}
		=
		\chi(g)\binom{1}{1},
		\]
		and since \(\binom{1}{1}\equiv \binom{1}{0}\pmod{W}\), it follows that
		\[
		\lambda_{(g,e_1)}^Q(\overline{v}_1)=\chi(g)\overline{v}_1.
		\]
		Thus, in both cases,
		\[
		\lambda_{(g,e_i)}^Q=\chi(g)\id_Q.
		\]
	\end{proof}
	
	Let
	\[
	\beta=\{\beta_x\}_{x\in D}
	\]
	be the off-diagonal family attached to the splitting \(s\). Thus
	\[
	\lambda_x=
	\begin{pmatrix}
		\lambda_x^W & \beta_x\\
		0 & \lambda_x^Q
	\end{pmatrix}
	\]
	with respect to the decomposition \(V=W\oplus s(Q)\).
	
	\begin{proposition}\label{prop:beta-example}
		The family \(\beta\) is given by
		\[
		\beta_{(g,e_0)}=0,
		\qquad
		\beta_{(g,e_1)}(\overline{v}_1)=\chi(g)v_2.
		\]
		In particular,
		\[
		\beta_{(1,e_1)}(\overline{v}_1)=v_2\neq 0.
		\]
		Hence the class of \eqref{eq:example-ses} in \(\Ext^1_{\Rep(D)}(Q,W)\) is nonzero.
	\end{proposition}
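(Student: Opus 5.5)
The plan has two parts: compute \(\beta\) directly from the matrices \eqref{eq:lambda-example}, then deduce nonvanishing of the class. Throughout we work in the decomposition \(V=W\oplus s(Q)=\Span\{v_2\}\oplus\Span\{v_1\}\). By Lemma~\ref{lem:block-forms}, \(\beta_x(\overline{v}_1)\) is the \(W\)-component of \(\lambda_x(s(\overline{v}_1))=\lambda_x(v_1)\).

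\textbf{Computing \(\beta\).} For \(x=(g,e_0)\) we have \(\lambda_x(v_1)=\chi(g)P_0v_1=\chi(g)v_1\). Its \(v_2\)-component is zero, so \(\beta_{(g,e_0)}=0\). For \(x=(g,e_1)\) we have \(\lambda_x(v_1)=\chi(g)P_1v_1=\chi(g)(v_1+v_2)\). Its \(W\)-component is \(\chi(g)v_2\), so \(\beta_{(g,e_1)}(\overline{v}_1)=\chi(g)v_2\). The diagonal blocks in these computations agree with Lemma~\ref{lem:diagonal-actions-example}. Setting \(g=1\) gives \(\beta_{(1,e_1)}(\overline{v}_1)=v_2\neq0\).

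\textbf{Nonvanishing of the class.} The cleanest route is to avoid computing \(B^1\) explicitly. Proposition~\ref{prop:example-nonsplit} shows that \eqref{eq:example-ses} does not split. Step~6 of Theorem~\ref{thm:Ext1-cocycle} shows that the zero class corresponds exactly to split sequences. Hence \([\beta]\neq0\).

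As an independent check, one can show directly that \(\beta\notin B^1(Q,W)\). Take \(t\in\Hom_\rho(Q,W)\) with \(t(\overline{v}_1)=a v_2\); every \(t\) is \(\rho\)-equivariant here, since both \(\rho\)-actions are by \(\chi(g)\). Using Lemma~\ref{lem:diagonal-actions-example},
\[
(\delta t)_x=\lambda_x^W t-t\lambda_x^Q=-\chi(g)t .
\]
So \(\beta=\delta t\) would force \(a=0\) from the \(e_0\) component and \(a=-1\) from the \(e_1\) component, a contradiction.

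\textbf{Main obstacle.} The only real point of care is bookkeeping: correctly identifying the \(W\)-component relative to the chosen complement \(s(Q)\), and not relative to some other decomposition. The two-sided comparison between the \(e_0\) and \(e_1\) components is what rules out a coboundary.
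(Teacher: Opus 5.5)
Your proposal is correct and follows the paper's proof. It uses the same computation of the $W$-component of $\lambda_x(v_1)$ relative to $V=\Span\{v_2\}\oplus\Span\{v_1\}$, and the same deduction of $[\beta]\neq 0$ from Proposition~\ref{prop:example-nonsplit} combined with the characterization of the zero class in Theorem~\ref{thm:Ext1-cocycle}. Your additional direct check that $\beta\notin B^1(Q,W)$ is also correct: every $t$ is $\rho$-equivariant, $(\delta t)_{(g,e_i)}=-\chi(g)t$, and the $e_0$- and $e_1$-components force the incompatible values $a=0$ and $a=-1$. It establishes the nonvanishing of the class independently of Proposition~\ref{prop:example-nonsplit}.
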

	
	\begin{proof}
		By definition, \(\beta_x\) is the \(W\)-component of \(\lambda_x(s(\overline{v}_1))\).
		
		If \(x=(g,e_0)\), then
		\[
		\lambda_{(g,e_0)}(v_1)
		=
		\chi(g)P_0\binom{1}{0}
		=
		\chi(g)\binom{1}{0}.
		\]
		This vector lies entirely in \(s(Q)=\Span\{v_1\}\), so its \(W\)-component is zero. Hence
		\[
		\beta_{(g,e_0)}=0.
		\]
		
		If \(x=(g,e_1)\), then
		\[
		\lambda_{(g,e_1)}(v_1)
		=
		\chi(g)P_1\binom{1}{0}
		=
		\chi(g)\binom{1}{1}
		=
		\chi(g)v_1+\chi(g)v_2.
		\]
		Thus the \(W\)-component is \(\chi(g)v_2\), so
		\[
		\beta_{(g,e_1)}(\overline{v}_1)=\chi(g)v_2.
		\]
		In particular,
		\[
		\beta_{(1,e_1)}(\overline{v}_1)=v_2\neq 0.
		\]
		
		By Theorem~\ref{thm:Ext1-cocycle}, the extension class of \eqref{eq:example-ses} corresponds to
		the class of \(\beta\) in \(Z^1(Q,W)/B^1(Q,W)\). If this class were zero, the extension would split
		in \(\Rep(D)\). This contradicts Proposition~\ref{prop:example-nonsplit}. Therefore the class is
		nonzero.
	\end{proof}
	
	\begin{remark}
		Proposition~\ref{prop:beta-example} gives a completely explicit obstruction to Maschke
		semisimplicity in the category \(\Rep(D)\). The \(\rho\)-side splits, as predicted by
		Section~\ref{sec:cocycle}, but the off-diagonal \(\lambda\)-data survives and defines a nontrivial
		class in \(\Ext^1_{\Rep(D)}(Q,W)\).
	\end{remark}
	
	\section{A spectral sequence for generalized digroups}
	\label{sec:spectral}
	
	Having obtained an explicit cocycle model for first extensions, we now move to a more global
	cohomological viewpoint. The purpose of this section is to separate the group-theoretic and
	halo-theoretic contributions to extension theory by means of a spectral sequence. This provides a
	conceptual explanation for the phenomenon already visible in the previous sections: the group side
	may become homologically trivial under a Maschke hypothesis, while the halo side continues to
	support nontrivial extension classes.
	
	Throughout this section, fix a structural presentation
	\[
	D \simeq G\times E
	\]
	as in Theorem~\ref{thm:structure}. We write the operations as
	\[
	(g,\alpha)\vdash(h,\beta)=(gh,\,g\bullet\beta),
	\qquad
	(g,\alpha)\dashv(h,\beta)=(gh,\,\alpha).
	\]
	
	\subsection{The halo algebra}
	
	We first isolate the algebra generated by the halo.
	
	\begin{definition}\label{def:BE}
		Let \(B_E\) be the unital associative \(\kk\)-algebra generated by symbols
		\[
		\{\varepsilon_\alpha \mid \alpha\in E\}
		\]
		subject to the defining relations
		\begin{equation}\label{eq:BE-rel}
			\varepsilon_\alpha\varepsilon_\beta=\varepsilon_\alpha
			\qquad
			(\alpha,\beta\in E).
		\end{equation}
	\end{definition}
	
	\begin{remark}
		Since \(B_E\) is defined as a quotient of a free associative algebra, associativity is automatic.
		Relation \eqref{eq:BE-rel} implies that every \(\varepsilon_\alpha\) is idempotent and that the
		product of two generators remembers only the left factor. Thus \(B_E\) algebraically records the
		left-zero behaviour of the halo.
	\end{remark}
	
	The action of \(G\) on \(E\) induces a family of algebra automorphisms of \(B_E\).
	
	\begin{lemma}\label{lem:tau-action}
		For every \(g\in G\), the assignment
		\[
		\tau_g(\varepsilon_\alpha):=\varepsilon_{g\bullet\alpha}
		\qquad
		(\alpha\in E)
		\]
		extends uniquely to an algebra automorphism of \(B_E\). Moreover,
		\[
		\tau_{gh}=\tau_g\circ\tau_h
		\qquad
		(g,h\in G).
		\]
	\end{lemma}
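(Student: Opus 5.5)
The natural route is the universal property of a presented algebra, in the same way Proposition~\ref{prop:universal} was used for \(A_D\). Since \(B_E\) is the quotient of the free unital algebra \(\kk\langle \varepsilon_\alpha\mid\alpha\in E\rangle\) by the two-sided ideal generated by the elements \(\varepsilon_\alpha\varepsilon_\beta-\varepsilon_\alpha\), a unital algebra morphism \(B_E\to C\) is the same thing as a family \((c_\alpha)_{\alpha\in E}\subseteq C\) with \(c_\alpha c_\beta=c_\alpha\) for all \(\alpha,\beta\in E\).

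\textbf{Existence and uniqueness of \(\tau_g\).} Fix \(g\in G\) and take \(C=B_E\) with \(c_\alpha:=\varepsilon_{g\bullet\alpha}\). The relation to check is
\[
\varepsilon_{g\bullet\alpha}\varepsilon_{g\bullet\beta}=\varepsilon_{g\bullet\alpha}.
\]
This is just \eqref{eq:BE-rel} applied to the pair \((g\bullet\alpha,g\bullet\beta)\in E\times E\). So the assignment descends to a unital algebra endomorphism \(\tau_g\) of \(B_E\). It is unique because \(B_E\) is generated as a unital algebra by the classes of the \(\varepsilon_\alpha\), and an algebra morphism is determined by its values on generators.

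\textbf{Composition law.} Both \(\tau_{gh}\) and \(\tau_g\circ\tau_h\) are unital algebra endomorphisms of \(B_E\). On generators they agree:
\[
\tau_g(\tau_h(\varepsilon_\alpha))=\tau_g(\varepsilon_{h\bullet\alpha})=\varepsilon_{g\bullet(h\bullet\alpha)}=\varepsilon_{(gh)\bullet\alpha}=\tau_{gh}(\varepsilon_\alpha).
\]
The middle step uses the action identity recalled in the proof of Proposition~\ref{prop:product-gd}. By the uniqueness just established, \(\tau_{gh}=\tau_g\circ\tau_h\).

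\textbf{Bijectivity.} Since \(1\bullet\alpha=\alpha\), the map \(\tau_1\) fixes every generator, so uniqueness gives \(\tau_1=\id_{B_E}\). Then
\[
\tau_g\circ\tau_{g^{-1}}=\tau_{1}=\id_{B_E}=\tau_{g^{-1}}\circ\tau_g,
\]
so each \(\tau_g\) is an automorphism with inverse \(\tau_{g^{-1}}\).

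The only point needing care is the well-definedness step: the defining relations must be sent to relations. This is painless here because the relation set \eqref{eq:BE-rel} is indexed by all pairs in \(E\times E\) and is therefore permuted by the bijection \(\alpha\mapsto g\bullet\alpha\). Everything else is formal.
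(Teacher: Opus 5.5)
Your proof is correct and follows essentially the same route as the paper: you check that \(\varepsilon_\alpha\mapsto\varepsilon_{g\bullet\alpha}\) respects the relations \(\varepsilon_\alpha\varepsilon_\beta=\varepsilon_\alpha\), and you verify the composition law on generators. The only difference is cosmetic: you deduce invertibility from \(\tau_1=\id_{B_E}\) and the composition law, which gives the two-sided inverse \(\tau_{g^{-1}}\) explicitly, whereas the paper checks \(\tau_{g^{-1}}\circ\tau_g=\id\) directly on generators.
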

	
	\begin{proof}
		Since \(B_E\) is defined by generators and relations, it is enough to verify that the assignment on
		generators preserves the defining relations. For \(\alpha,\beta\in E\),
		\[
		\tau_g(\varepsilon_\alpha\varepsilon_\beta)
		=
		\tau_g(\varepsilon_\alpha)
		=
		\varepsilon_{g\bullet\alpha},
		\]
		whereas
		\[
		\tau_g(\varepsilon_\alpha)\tau_g(\varepsilon_\beta)
		=
		\varepsilon_{g\bullet\alpha}\,\varepsilon_{g\bullet\beta}
		=
		\varepsilon_{g\bullet\alpha}.
		\]
		Hence \(\tau_g\) extends uniquely to an algebra endomorphism of \(B_E\).
		
		It is invertible, with inverse \(\tau_{g^{-1}}\), because
		\[
		\tau_{g^{-1}}(\tau_g(\varepsilon_\alpha))
		=
		\tau_{g^{-1}}(\varepsilon_{g\bullet\alpha})
		=
		\varepsilon_{g^{-1}\bullet(g\bullet\alpha)}
		=
		\varepsilon_\alpha.
		\]
		
		Finally, for \(\alpha\in E\),
		\[
		(\tau_g\circ\tau_h)(\varepsilon_\alpha)
		=
		\tau_g(\varepsilon_{h\bullet\alpha})
		=
		\varepsilon_{g\bullet(h\bullet\alpha)}
		=
		\varepsilon_{(gh)\bullet\alpha}
		=
		\tau_{gh}(\varepsilon_\alpha),
		\]
		because \(\bullet\) is a left action of \(G\) on \(E\).
	\end{proof}
	
	\subsection{The auxiliary semilinear category}
	
	We now introduce the auxiliary category that isolates the halo module structure together with the
	genuine action of the group component.
	
	\begin{definition}\label{def:semi-linear-cat}
		Let \(\mathcal C(E,G)\) be the category whose objects are pairs
		\[
		(M,t),
		\]
		where \(M\) is a left \(B_E\)-module and
		\[
		t_g\in\Aut_\kk(M)
		\qquad(g\in G)
		\]
		is a family satisfying
		\begin{align}
			t_1&=\id_M, \label{eq:C1}\\
			t_gt_h&=t_{gh}, \label{eq:C2}\\
			t_g(bm)&=\tau_g(b)\,t_g(m)
			\qquad
			(b\in B_E,\ m\in M). \label{eq:C3}
		\end{align}
		A morphism
		\[
		f:(M,t)\longrightarrow (N,s)
		\]
		in \(\mathcal C(E,G)\) is a \(B_E\)-linear map \(f:M\to N\) such that
		\[
		ft_g=s_gf
		\qquad(g\in G).
		\]
	\end{definition}
	
	We now relate \(\Rep(D)\) to \(\mathcal C(E,G)\).
	
	\begin{proposition}\label{prop:Rep-to-semi-linear}
		Let \((V,\lambda,\rho)\in \Rep(D)\). Define a left \(B_E\)-module structure on \(V\) by
		\[
		\varepsilon_\alpha\cdot v:=\lambda_{(1,\alpha)}(v)
		\qquad
		(\alpha\in E,\ v\in V),
		\]
		and define
		\[
		t_g:=\rho_g
		\qquad(g\in G).
		\]
		Then \((V,t)\) is an object of \(\mathcal C(E,G)\).
	\end{proposition}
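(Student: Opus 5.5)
The plan is to check each piece of structure in turn directly from the axioms (R1)--(R5) of Definition~\ref{def:representation}, working in the structural model of Theorem~\ref{thm:structure}. The only step requiring an idea is the semilinearity condition \eqref{eq:C3}.

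\emph{Step 1: the $B_E$-module structure.} Since $B_E$ is presented by generators and relations, by its universal property it suffices to check that the operators $\lambda_{(1,\alpha)}\in\End_\kk(V)$ satisfy \eqref{eq:BE-rel}; the unit of $B_E$ acts as $\id_V$. In the product model,
\[
(1,\alpha)\dashv(1,\beta)=(1,\alpha),
\]
so (R1) gives $\lambda_{(1,\alpha)}\lambda_{(1,\beta)}=\lambda_{(1,\alpha)}$, as required. Hence $\varepsilon_\alpha\mapsto\lambda_{(1,\alpha)}$ extends to a unital algebra map $B_E\to\End_\kk(V)$, which is the desired module structure.

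\emph{Step 2: the family $t$.} Set $t_g:=\rho_g$; this is well defined by Proposition~\ref{prop:rho-depends-only-on-g}. Each $\rho_g$ lies in $\Aut_\kk(V)$ by definition of a representation. Condition \eqref{eq:C1} holds because $t_1=\rho_{(1,\alpha)}=\id_V$ by (R3). Condition \eqref{eq:C2} is exactly the multiplicativity $\rho_{gh}=\rho_g\rho_h$ established in Proposition~\ref{prop:rho-depends-only-on-g}.

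\emph{Step 3: semilinearity \eqref{eq:C3}.} This is the main point. First check it on generators, showing that $\rho_g\lambda_{(1,\alpha)}$ and $\lambda_{(1,g\bullet\alpha)}\rho_g$ both equal $\lambda_{(g,g\bullet\alpha)}$.
\begin{itemize}
\item Using (R4) together with $(g,\gamma)\vdash(1,\alpha)=(g,g\bullet\alpha)$, we get $\rho_g\lambda_{(1,\alpha)}=\lambda_{(g,g\bullet\alpha)}$.
\item Using (R5) together with $(1,g\bullet\alpha)\dashv(g,\beta)=(g,g\bullet\alpha)$, we get $\lambda_{(1,g\bullet\alpha)}\rho_g=\lambda_{(g,g\bullet\alpha)}$. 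This is essentially Remark~\ref{rem:lambda-factorization}.
\end{itemize}
Thus $t_g(\varepsilon_\alpha m)=\tau_g(\varepsilon_\alpha)\,t_g(m)$.

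To pass from generators to all of $B_E$, note that the set of $b\in B_E$ satisfying $t_g(bm)=\tau_g(b)t_g(m)$ for all $m$ is closed under the operations that generate $B_E$:
\begin{itemize}
\item it contains $1$, since $\tau_g(1)=1$;
\item it is closed under linear combinations, since $\tau_g$ is linear;
\item it is closed under products, since $t_g(bb'm)=\tau_g(b)\,t_g(b'm)=\tau_g(b)\tau_g(b')\,t_g(m)=\tau_g(bb')\,t_g(m)$, using that $\tau_g$ is an algebra morphism (Lemma~\ref{lem:tau-action}).
\end{itemize}
This set is therefore a unital subalgebra containing all $\varepsilon_\alpha$, so it is all of $B_E$. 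Hence $(V,t)$ is an object of $\mathcal C(E,G)$.

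The only delicate point is choosing the right auxiliary elements of $G\times E$ so that (R4) and (R5) produce the same operator $\lambda_{(g,g\bullet\alpha)}$. The independence from the auxiliary choices $\gamma,\beta$ is guaranteed by Proposition~\ref{prop:rho-depends-only-on-g}.
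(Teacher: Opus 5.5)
Your proposal is correct and follows essentially the same route as the paper. Both arguments get the $B_E$-module structure from (R1) applied to $(1,\alpha)\dashv(1,\beta)=(1,\alpha)$, obtain \eqref{eq:C1}--\eqref{eq:C2} from Proposition~\ref{prop:rho-depends-only-on-g}, and derive $\rho_g\lambda_{(1,\alpha)}=\lambda_{(1,g\bullet\alpha)}\rho_g$ by combining (R4) with the factorization of Remark~\ref{rem:lambda-factorization}. Your subalgebra argument, which uses that $\tau_g$ is an algebra map to extend semilinearity from the generators $\varepsilon_\alpha$ to all $b\in B_E$, fills in a step the paper leaves implicit.
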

	
	\begin{proof}
		First,
		\[
		\lambda_{(1,\alpha)}\lambda_{(1,\beta)}
		=
		\lambda_{(1,\alpha)\dashv(1,\beta)}
		=
		\lambda_{(1,\alpha)},
		\]
		so the assignment \(\varepsilon_\alpha\mapsto \lambda_{(1,\alpha)}\) defines a left
		\(B_E\)-module structure.
		
		Next, by Proposition~\ref{prop:rho-depends-only-on-g}, one has
		\[
		t_1=\rho_1=\id_V
		\qquad\text{and}\qquad
		t_gt_h=\rho_g\rho_h=\rho_{gh}=t_{gh}.
		\]
		Hence \eqref{eq:C1} and \eqref{eq:C2} hold.
		
		It remains to prove \eqref{eq:C3}. Let \(\alpha\in E\) and \(v\in V\). By axiom \emph{(R4)},
		\[
		\rho_g\lambda_{(1,\alpha)}
		=
		\lambda_{(g,g\bullet\alpha)}.
		\]
		Using Remark~\ref{rem:lambda-factorization}, we rewrite the right-hand side as
		\[
		\lambda_{(g,g\bullet\alpha)}
		=
		\lambda_{(1,g\bullet\alpha)}\rho_g.
		\]
		Hence
		\[
		\rho_g\lambda_{(1,\alpha)}
		=
		\lambda_{(1,g\bullet\alpha)}\rho_g.
		\]
		Equivalently,
		\[
		t_g(\varepsilon_\alpha\cdot v)
		=
		\varepsilon_{g\bullet\alpha}\cdot t_g(v)
		=
		\tau_g(\varepsilon_\alpha)\cdot t_g(v).
		\]
		This proves \eqref{eq:C3}.
	\end{proof}
	
	\begin{proposition}\label{prop:semi-linear-to-Rep}
		Let \((M,t)\in \mathcal C(E,G)\). Define maps
		\[
		\rho:D=G\times E\longrightarrow \Aut_\kk(M),
		\qquad
		\lambda:D=G\times E\longrightarrow \End_\kk(M)
		\]
		by
		\[
		\rho_{(g,\alpha)}:=t_g,
		\qquad
		\lambda_{(g,\alpha)}(m):=\varepsilon_\alpha\cdot t_g(m).
		\]
		Then \((M,\lambda,\rho)\) is a representation of \(D\). Moreover, this construction is inverse
		to that of Proposition~\ref{prop:Rep-to-semi-linear}. Consequently, the categories
		\(\Rep(D)\) and \(\mathcal C(E,G)\) are equivalent.
	\end{proposition}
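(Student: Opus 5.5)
The plan is to verify the five axioms of Definition~\ref{def:representation} directly from \eqref{eq:C1}--\eqref{eq:C3} and \eqref{eq:BE-rel}. Then I will check that the two constructions are mutually inverse on objects and that morphisms correspond.

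\textbf{Step 1: the \(\rho\)-axioms.} Each \(t_g\) is an automorphism, so \(\rho\) takes values in \(\Aut_\kk(M)\). For (R2),
\[
\rho_{(g,\alpha)\vdash(h,\beta)}=\rho_{(gh,g\bullet\beta)}=t_{gh}=t_gt_h
\]
by \eqref{eq:C2}. For (R3), \(\rho_{(1,\alpha)}=t_1=\id_M\) by \eqref{eq:C1}.

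\textbf{Step 2: the \(\lambda\)-axioms.} The only real input here is the commutation rule \(t_g\varepsilon_\beta=\varepsilon_{g\bullet\beta}t_g\), which is \eqref{eq:C3} applied to \(b=\varepsilon_\beta\).

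For (R1), write \(x=(g,\alpha)\) and \(y=(h,\beta)\). Then
\[
\lambda_x\lambda_y=\varepsilon_\alpha t_g\varepsilon_\beta t_h=\varepsilon_\alpha\varepsilon_{g\bullet\beta}t_gt_h=\varepsilon_\alpha t_{gh},
\]
using \eqref{eq:BE-rel} and \eqref{eq:C2}. The right-hand side equals \(\lambda_{(gh,\alpha)}=\lambda_{x\dashv y}\).

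For (R4),
\[
\rho_x\lambda_y=t_g\varepsilon_\beta t_h=\varepsilon_{g\bullet\beta}t_{gh}=\lambda_{(gh,g\bullet\beta)}=\lambda_{x\vdash y}.
\]

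For (R5), \(\lambda_x\rho_y=\varepsilon_\alpha t_gt_h=\lambda_{(gh,\alpha)}=\lambda_{x\dashv y}\).

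\textbf{Step 3: the constructions are inverse on objects.} Start from \((M,t)\), build \((M,\lambda,\rho)\), and apply Proposition~\ref{prop:Rep-to-semi-linear}. The module action of \(\varepsilon_\alpha\) becomes \(\lambda_{(1,\alpha)}=\varepsilon_\alpha t_1=\varepsilon_\alpha\), so the original \(B_E\)-module structure returns. The new \(t_g\) is \(\rho_g=t_g\). Since \(B_E\) is generated by the \(\varepsilon_\alpha\), the module structure agrees with the original one.

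Conversely, start from \((V,\lambda,\rho)\). The reconstructed \(\rho_{(g,\alpha)}\) is \(\rho_g\), which recovers the original \(\rho\) by Proposition~\ref{prop:rho-depends-only-on-g}. The reconstructed \(\lambda_{(g,\alpha)}\) is \(\lambda_{(1,\alpha)}\rho_g\). Remark~\ref{rem:lambda-factorization} identifies this with the original \(\lambda_{(g,\alpha)}\).

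\textbf{Step 4: morphisms.} Take a morphism in \(\Rep(D)\). It commutes with every \(\lambda_{(1,\alpha)}\), hence with the \(B_E\)-action, and it commutes with every \(\rho_g=t_g\). So it is a morphism in \(\mathcal C(E,G)\).

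Conversely, take a \(B_E\)-linear map \(f\) intertwining the families \(t\). Then \(f\) commutes with \(\rho_{(g,\alpha)}=t_g\). It also commutes with \(\lambda_{(g,\alpha)}=\varepsilon_\alpha t_g\), since it commutes with each factor.

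Both functors act as the identity on underlying linear maps, and the object-level constructions are mutually inverse. Hence the functors are mutually inverse isomorphisms of categories, and in particular \(\Rep(D)\) and \(\mathcal C(E,G)\) are equivalent.

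I expect no serious obstacle. The most delicate point is Step~2, specifically the order of the factors in \(\lambda_x\lambda_y\): moving \(t_g\) past \(\varepsilon_\beta\) via \eqref{eq:C3} must produce \(\varepsilon_\alpha\varepsilon_{g\bullet\beta}\), which collapses to \(\varepsilon_\alpha\) by the left-zero relation \eqref{eq:BE-rel}. This matches the structural product \((g,\alpha)\dashv(h,\beta)=(gh,\alpha)\). I will also confirm that the identification in Step~3 uses Remark~\ref{rem:lambda-factorization} with the same convention \(\lambda_{(g,\alpha)}=L_\alpha\rho_g\).
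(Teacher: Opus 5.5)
Your proposal is correct and follows essentially the same route as the paper. You verify (R1)--(R5) directly using the commutation rule \(t_g\varepsilon_\beta=\varepsilon_{g\bullet\beta}t_g\) from \eqref{eq:C3} and the left-zero relation \eqref{eq:BE-rel}, then check the object-level inverse via Proposition~\ref{prop:rho-depends-only-on-g} and Remark~\ref{rem:lambda-factorization}; your explicit morphism check, which the paper calls ``immediate'', correctly upgrades the conclusion to an isomorphism of categories.
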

	
	\begin{proof}
		We verify the five axioms of Definition~\ref{def:representation}.
		
		Let
		\[
		x=(g,\alpha),
		\qquad
		y=(h,\beta)
		\]
		be elements of \(D=G\times E\), and let \(m\in M\).
		
		\smallskip
		\noindent
		\textbf{Verification of \emph{(R1)}.}
		Since
		\[
		x\dashv y=(gh,\alpha),
		\]
		we have
		\[
		\lambda_{x\dashv y}(m)=\varepsilon_\alpha\cdot t_{gh}(m).
		\]
		On the other hand,
		\begin{align*}
			\lambda_x(\lambda_y(m))
			&=\lambda_x(\varepsilon_\beta\cdot t_h(m))\\
			&=\varepsilon_\alpha\cdot t_g(\varepsilon_\beta\cdot t_h(m))\\
			&=\varepsilon_\alpha\cdot \tau_g(\varepsilon_\beta)\,t_g t_h(m)\\
			&=\varepsilon_\alpha\cdot \varepsilon_{g\bullet\beta}\,t_{gh}(m)\\
			&=\varepsilon_\alpha\cdot t_{gh}(m),
		\end{align*}
		because \(\varepsilon_\alpha\varepsilon_{g\bullet\beta}=\varepsilon_\alpha\). Therefore
		\[
		\lambda_{x\dashv y}=\lambda_x\lambda_y.
		\]
		
		\smallskip
		\noindent
		\textbf{Verification of \emph{(R2)}.}
		Since
		\[
		x\vdash y=(gh,g\bullet\beta),
		\]
		we have
		\[
		\rho_{x\vdash y}=t_{gh}=t_gt_h=\rho_x\rho_y.
		\]
		
		\smallskip
		\noindent
		\textbf{Verification of \emph{(R3)}.}
		If \(e=(1,\alpha)\in\overline E\), then
		\[
		\rho_e=t_1=\id_M.
		\]
		
		\smallskip
		\noindent
		\textbf{Verification of \emph{(R4)}.}
		Using \eqref{eq:C3}, we compute
		\begin{align*}
			\rho_x(\lambda_y(m))
			&=t_g(\varepsilon_\beta\cdot t_h(m))\\
			&=\tau_g(\varepsilon_\beta)\,t_gt_h(m)\\
			&=\varepsilon_{g\bullet\beta}\cdot t_{gh}(m)\\
			&=\lambda_{(gh,g\bullet\beta)}(m)\\
			&=\lambda_{x\vdash y}(m).
		\end{align*}
		
		\smallskip
		\noindent
		\textbf{Verification of \emph{(R5)}.}
		We compute
		\begin{align*}
			\lambda_x(\rho_y(m))
			&=\varepsilon_\alpha\cdot t_g(t_h(m))\\
			&=\varepsilon_\alpha\cdot t_{gh}(m)\\
			&=\lambda_{(gh,\alpha)}(m)\\
			&=\lambda_{x\dashv y}(m).
		\end{align*}
		
		Thus \((M,\lambda,\rho)\) is a representation of \(D\).
		
		We now verify that this construction is inverse to that of
		Proposition~\ref{prop:Rep-to-semi-linear}.
		
		Start with \((V,\lambda,\rho)\in\Rep(D)\). Proposition~\ref{prop:Rep-to-semi-linear} defines
		the \(B_E\)-action by
		\[
		\varepsilon_\alpha\cdot v=\lambda_{(1,\alpha)}(v)
		\]
		and \(t_g=\rho_g\). Applying the present construction then gives
		\[
		\rho'_{(g,\alpha)}=t_g=\rho_g=\rho_{(g,\alpha)},
		\]
		because \(\rho_{(g,\alpha)}\) depends only on \(g\), and
		\[
		\lambda'_{(g,\alpha)}(v)
		=
		\varepsilon_\alpha\cdot t_g(v)
		=
		\lambda_{(1,\alpha)}\rho_g(v)
		=
		\lambda_{(g,\alpha)}(v)
		\]
		by Remark~\ref{rem:lambda-factorization}.
		
		Conversely, start with \((M,t)\in\mathcal C(E,G)\). The present construction defines
		\[
		\rho_{(g,\alpha)}=t_g,
		\qquad
		\lambda_{(g,\alpha)}(m)=\varepsilon_\alpha\cdot t_g(m).
		\]
		Applying Proposition~\ref{prop:Rep-to-semi-linear}, the induced \(B_E\)-action is
		\[
		\varepsilon_\alpha\cdot m=\lambda_{(1,\alpha)}(m)=\varepsilon_\alpha\cdot t_1(m)=\varepsilon_\alpha\cdot m,
		\]
		so it is the original one, and the induced operators are
		\[
		t'_g=\rho_g=t_g.
		\]
		Thus both constructions are mutually inverse on objects, and the same is immediate for morphisms.
		
		Therefore \(\Rep(D)\) and \(\mathcal C(E,G)\) are equivalent categories.
	\end{proof}
	
	\subsection{The $G$-action on $\Hom_{B_E}(Q,W)$}
	
	Let \(Q,W\in \Rep(D)\). By Propositions~\ref{prop:Rep-to-semi-linear}
	and~\ref{prop:semi-linear-to-Rep}, we regard them as objects
	\[
	(Q,t^Q),\qquad (W,t^W)
	\]
	of \(\mathcal C(E,G)\). In particular, \(Q\) and \(W\) are left \(B_E\)-modules, where
	\[
	\varepsilon_\alpha\cdot q=\lambda_{(1,\alpha)}^Q(q),
	\qquad
	\varepsilon_\alpha\cdot w=\lambda_{(1,\alpha)}^W(w),
	\]
	and they are endowed with genuine representations
	\[
	t_g^Q=\rho_g^Q,
	\qquad
	t_g^W=\rho_g^W
	\qquad(g\in G),
	\]
	satisfying
	\[
	t_g^Q t_h^Q=t_{gh}^Q,
	\qquad
	t_g^W t_h^W=t_{gh}^W.
	\]
	
	\begin{definition}\label{def:G-action-Hom}
		For \(f\in \Hom_{B_E}(Q,W)\) and \(g\in G\), define
		\begin{equation}\label{eq:GactionHom}
			(g\cdot f)(q):=
			t_g^W\bigl(f((t_g^Q)^{-1}(q))\bigr)
			\qquad
			(q\in Q).
		\end{equation}
	\end{definition}
	
	\begin{lemma}\label{lem:G-action-Hom}
		Formula \eqref{eq:GactionHom} defines a left action of \(G\) on \(\Hom_{B_E}(Q,W)\).
	\end{lemma}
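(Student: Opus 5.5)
The plan is to check three things in turn: that \(g\cdot f\) is again \(B_E\)-linear, that \(1\) acts trivially, and that \((gh)\cdot f=g\cdot(h\cdot f)\). Throughout, I will use the defining properties \eqref{eq:C1}--\eqref{eq:C3} of the objects \((Q,t^Q)\) and \((W,t^W)\) of \(\mathcal C(E,G)\), supplied by Proposition~\ref{prop:Rep-to-semi-linear}, together with Lemma~\ref{lem:tau-action}.

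\textbf{Step 1: \(B_E\)-linearity of \(g\cdot f\).} This is the only step that is not purely formal. First I will record that \((t_g^Q)^{-1}=t_{g^{-1}}^Q\), which follows from \eqref{eq:C1} and \eqref{eq:C2}. Applying \eqref{eq:C3} with \(g^{-1}\) then gives
\[
(t_g^Q)^{-1}(bq)=\tau_{g^{-1}}(b)\,(t_g^Q)^{-1}(q).
\]
Now take \(b\in B_E\) and \(q\in Q\). Using this identity, then the \(B_E\)-linearity of \(f\), then \eqref{eq:C3} for \(t^W\), I obtain
\[
(g\cdot f)(bq)=t_g^W\bigl(\tau_{g^{-1}}(b)\,f((t_g^Q)^{-1}q)\bigr)=\tau_g\tau_{g^{-1}}(b)\,(g\cdot f)(q).
\]
By Lemma~\ref{lem:tau-action}, \(\tau_g\tau_{g^{-1}}=\tau_1=\id\), so the right-hand side equals \(b\,(g\cdot f)(q)\).

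It suffices to verify this on the generators \(b=\varepsilon_\alpha\), because both sides are linear and multiplicative in \(b\). Note also that \(\tau_1=\id\) holds because \(1\bullet\alpha=\alpha\). Linearity of \(g\cdot f\) over \(\kk\) is clear, since it is a composite of linear maps.

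\textbf{Step 2: the action axioms.} For the identity, \(t_1^Q=\id\) and \(t_1^W=\id\) by \eqref{eq:C1}, so \(1\cdot f=f\). For compatibility, \eqref{eq:C2} gives \(t_{gh}^W=t_g^Wt_h^W\) and \((t_{gh}^Q)^{-1}=(t_h^Q)^{-1}(t_g^Q)^{-1}\). Substituting these into \eqref{eq:GactionHom} yields
\[
(gh)\cdot f=t_g^W\bigl(t_h^W f (t_h^Q)^{-1}\bigr)(t_g^Q)^{-1}=g\cdot(h\cdot f).
\]

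\textbf{Main obstacle.} The main obstacle is Step 1, specifically keeping track of how \(\tau\) twists the scalars on each side so that the two twists cancel. Once the inverse \(t_{g^{-1}}\) is identified and \eqref{eq:C3} is applied to it, the rest is routine bookkeeping. Finally, I will note that the action is by \(\kk\)-linear automorphisms of \(\Hom_{B_E}(Q,W)\), since each map \(f\mapsto g\cdot f\) is linear in \(f\).
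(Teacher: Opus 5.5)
Your proposal is correct and follows essentially the same route as the paper: the unit and composition axioms come from \eqref{eq:C1}--\eqref{eq:C2}, and $B_E$-linearity of $g\cdot f$ comes from the $\tau_{g^{-1}}$-semilinearity of $(t_g^Q)^{-1}$ together with $\tau_g\tau_{g^{-1}}=\tau_1=\id$. You justify $(t_g^Q)^{-1}=t^Q_{g^{-1}}$ and its semilinearity explicitly, where the paper only asserts it; the reduction to the generators $\varepsilon_\alpha$ is harmless but unnecessary, since \eqref{eq:C3} is already stated for every $b\in B_E$.
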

	
	\begin{proof}
		Let \(f\in\Hom_{B_E}(Q,W)\), \(g,h\in G\), and \(q\in Q\). Then
		\begin{align*}
			\bigl(g\cdot(h\cdot f)\bigr)(q)
			&=
			t_g^W\Bigl((h\cdot f)\bigl((t_g^Q)^{-1}(q)\bigr)\Bigr)\\
			&=
			t_g^W t_h^W
			\Bigl(f\bigl((t_h^Q)^{-1}(t_g^Q)^{-1}(q)\bigr)\Bigr)\\
			&=
			t_{gh}^W
			\Bigl(f\bigl((t_{gh}^Q)^{-1}(q)\bigr)\Bigr)\\
			&=
			((gh)\cdot f)(q).
		\end{align*}
		Also,
		\[
		(1\cdot f)(q)=t_1^W(f((t_1^Q)^{-1}(q)))=f(q),
		\]
		so \(1\cdot f=f\).
		
		It remains to check that \(g\cdot f\) is again \(B_E\)-linear. Let \(b\in B_E\) and \(q\in Q\).
		Since \(t_g^Q\) is semilinear with twisting \(\tau_g\), its inverse is semilinear with twisting
		\(\tau_{g^{-1}}\). Therefore
		\[
		(t_g^Q)^{-1}(bq)=\tau_{g^{-1}}(b)\,(t_g^Q)^{-1}(q).
		\]
		Hence
		\begin{align*}
			(g\cdot f)(bq)
			&=
			t_g^W\Bigl(f\bigl((t_g^Q)^{-1}(bq)\bigr)\Bigr)\\
			&=
			t_g^W\Bigl(f\bigl(\tau_{g^{-1}}(b)\,(t_g^Q)^{-1}(q)\bigr)\Bigr)\\
			&=
			t_g^W\Bigl(\tau_{g^{-1}}(b)\,f((t_g^Q)^{-1}(q))\Bigr)\\
			&=
			b\,t_g^W\Bigl(f((t_g^Q)^{-1}(q))\Bigr)\\
			&=
			b\,(g\cdot f)(q).
		\end{align*}
		Hence \(g\cdot f\in\Hom_{B_E}(Q,W)\).
	\end{proof}
	
	\begin{lemma}\label{lem:Hom-invariants}
		One has
		\[
		\Hom_{\Rep(D)}(Q,W)=\Hom_{B_E}(Q,W)^G.
		\]
	\end{lemma}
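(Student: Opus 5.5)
We prove the equality by showing that both sides are the same subset of \(\Hom_\kk(Q,W)\). Via Propositions~\ref{prop:Rep-to-semi-linear} and~\ref{prop:semi-linear-to-Rep}, a morphism in \(\Rep(D)\) is a morphism in \(\mathcal C(E,G)\), namely a \(B_E\)-linear map \(f\) with \(ft^Q_g=t^W_gf\) for all \(g\). Unwinding Definition~\ref{def:G-action-Hom}, the condition \(g\cdot f=f\) for all \(g\) says exactly \(t^W_g f (t^Q_g)^{-1}=f\), i.e.\ \(f t^Q_g=t^W_g f\). So, once the morphism identification is in place, the equality is immediate. The proof therefore reduces to establishing that identification concretely, without appealing to the equivalence on morphisms, which the earlier proposition only asserted as ``immediate''.

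\emph{Inclusion \(\subseteq\).} Let \(f\in\Hom_{\Rep(D)}(Q,W)\). Then \(f\lambda^Q_x=\lambda^W_xf\) and \(f\rho^Q_x=\rho^W_xf\) for all \(x\in D\). Specializing to \(x=(1,\alpha)\) gives \(f(\varepsilon_\alpha q)=\varepsilon_\alpha f(q)\). Since the \(\varepsilon_\alpha\) generate \(B_E\) as a unital algebra, \(f\) is \(B_E\)-linear. Specializing the second identity to \(x=(g,\alpha)\) and using Proposition~\ref{prop:rho-depends-only-on-g} gives \(ft^Q_g=t^W_gf\). Hence \(g\cdot f=f\).

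\emph{Inclusion \(\supseteq\).} Let \(f\) be \(B_E\)-linear and \(G\)-invariant. Invariance gives \(f\rho^Q_g=\rho^W_gf\), which is the \(\rho\)-intertwining condition for every \(x=(g,\alpha)\). For the \(\lambda\)-condition, use Remark~\ref{rem:lambda-factorization}: \(\lambda_{(g,\alpha)}=\lambda_{(1,\alpha)}\rho_g\) on both \(Q\) and \(W\). Then
\[
f\lambda^Q_{(g,\alpha)}=f\,\varepsilon_\alpha\rho^Q_g=\varepsilon_\alpha f\rho^Q_g=\varepsilon_\alpha\rho^W_gf=\lambda^W_{(g,\alpha)}f .
\]
So \(f\) is a morphism in \(\Rep(D)\).

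The only point needing care is this last factorization step: every \(\lambda_x\) must be expressible through the halo generators and the group operators. That expressibility is exactly what Remark~\ref{rem:lambda-factorization} provides, so we do not expect a genuine obstacle.
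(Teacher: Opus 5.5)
Your proof is correct and follows essentially the same route as the paper: both inclusions are checked directly inside $\Hom_\kk(Q,W)$. Proposition~\ref{prop:rho-depends-only-on-g} passes between the operators $\rho_g$ and all $\rho_x$, and Remark~\ref{rem:lambda-factorization} reduces commutation with every $\lambda_{(g,\alpha)}$ to commutation with the $\varepsilon_\alpha$ and the $\rho_g$. Your observation that the $\varepsilon_\alpha$ generate $B_E$ as a unital algebra only makes explicit a step the paper leaves implicit.
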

	
	\begin{proof}
		Assume first that \(f\in \Hom_{\Rep(D)}(Q,W)\). Then \(f\) commutes with every
		\(\lambda_{(1,\alpha)}\), so \(f\) is \(B_E\)-linear. It also commutes with every \(\rho_g\), hence
		with every \(t_g\). Therefore
		\[
		(g\cdot f)(q)
		=
		t_g^W\bigl(f((t_g^Q)^{-1}(q))\bigr)
		=
		t_g^W\bigl((t_g^W)^{-1}(f(q))\bigr)
		=
		f(q),
		\]
		so \(f\in \Hom_{B_E}(Q,W)^G\).
		
		Conversely, let \(f\in \Hom_{B_E}(Q,W)^G\). The invariance condition means that
		\[
		t_g^W f (t_g^Q)^{-1}=f
		\qquad(g\in G).
		\]
		Equivalently,
		\[
		t_g^W f = f t_g^Q
		\qquad(g\in G).
		\]
		Since \(t_g=\rho_g\), this shows that \(f\) commutes with all \(\rho_g\), hence with all
		\(\rho_x\), because \(\rho_x\) depends only on the group component by
		Proposition~\ref{prop:rho-depends-only-on-g}. It also commutes with all
		\(\lambda_{(1,\alpha)}\), because it is \(B_E\)-linear. Finally, by
		Remark~\ref{rem:lambda-factorization},
		\[
		\lambda_{(g,\alpha)}=\lambda_{(1,\alpha)}\rho_g.
		\]
		Hence \(f\) commutes with every \(\lambda_{(g,\alpha)}\) as well. Therefore
		\(f\in \Hom_{\Rep(D)}(Q,W)\).
	\end{proof}
	
	\subsection{The halo functor and the spectral sequence}
	
	Fix \(Q\in \Rep(D)\). Define
	\[
	\mathcal F(-):=\Hom_{B_E}(Q,-),
	\]
	where the target is the category of left \(G\)-modules endowed with the action of
	Definition~\ref{def:G-action-Hom}.
	
	\begin{lemma}\label{lem:F-left-exact}
		The functor \(\mathcal F:\Rep(D)\to \kk G\text{-}\mathsf{Mod}\) is left exact.
	\end{lemma}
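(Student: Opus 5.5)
We factor \(\mathcal F\) as a composite of two functors and check left exactness for each. First, \(\mathcal F\) is a functor at all: a morphism \(u:W\to W'\) in \(\Rep(D)\) induces \(u_*:\Hom_{B_E}(Q,W)\to\Hom_{B_E}(Q,W')\), \(f\mapsto uf\). This map lands in \(B_E\)-linear maps because \(u\) commutes with every \(\lambda_{(1,\alpha)}\), i.e.\ \(u\) is \(B_E\)-linear by Proposition~\ref{prop:Rep-to-semi-linear}. It is \(G\)-equivariant for the action of Definition~\ref{def:G-action-Hom}: since \(u\) commutes with all \(\rho_g=t_g\),
\[
(g\cdot uf)(q)=t_g^{W'}u f(t_g^Q)^{-1}(q)=u\,t_g^{W}f(t_g^Q)^{-1}(q)=u\,(g\cdot f)(q).
\]
Functoriality (identities and composites) is immediate. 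Lemma~\ref{lem:G-action-Hom} already guarantees that each \(\mathcal F(W)\) is a left \(\kk G\)-module.

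For left exactness, take a short exact sequence \(0\to W'\xrightarrow{u}W\xrightarrow{p}W''\to 0\) in \(\Rep(D)\). By Theorem~\ref{thm:equivalence}, or directly since kernels and cokernels in \(\Rep(D)\) are computed on underlying vector spaces with the induced operators, this sequence is also exact as a sequence of underlying \(\kk\)-spaces. Its maps are \(B_E\)-linear, so it is exact as a sequence of left \(B_E\)-modules via the forgetful functor \(\Rep(D)\to B_E\text{-}\mathsf{Mod}\) of Proposition~\ref{prop:Rep-to-semi-linear}. The standard left exactness of \(\Hom_{B_E}(Q,-)\) then gives exactness of
\[
0\to\Hom_{B_E}(Q,W')\xrightarrow{u_*}\Hom_{B_E}(Q,W)\xrightarrow{p_*}\Hom_{B_E}(Q,W'')
\]
as vector spaces. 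The maps are \(G\)-equivariant by the first step. A sequence of \(\kk G\)-modules is exact exactly when it is exact on underlying vector spaces, so this is an exact sequence in \(\kk G\text{-}\mathsf{Mod}\).

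The only point requiring any care is the equivariance of \(u_*\) and \(p_*\), which rests on morphisms in \(\Rep(D)\) intertwining the \(t_g\). The exactness of the forgetful functor to \(B_E\)-modules is routine. I would remark, for later use, that composing with \((-)^G\) recovers \(\Hom_{\Rep(D)}(Q,-)\) by Lemma~\ref{lem:Hom-invariants}. This is the Grothendieck factorization needed for the spectral sequence.
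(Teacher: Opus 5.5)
Your proof is correct and takes the same route as the paper: left exactness of \(\Hom_{B_E}(Q,-)\), combined with the functoriality (\(G\)-equivariance of the induced maps) of the action in Definition~\ref{def:G-action-Hom}. You also supply details the paper leaves implicit. These are that morphisms in \(\Rep(D)\) are \(B_E\)-linear and intertwine the \(t_g\), and that exactness in \(\Rep(D)\), in \(B_E\)-modules and in \(\kk G\text{-}\mathsf{Mod}\) is all detected on the underlying vector spaces.
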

	
	\begin{proof}
		The functor \(\Hom_{B_E}(Q,-)\) is left exact on the underlying category of \(B_E\)-modules, and the
		\(G\)-action defined in \eqref{eq:GactionHom} is functorial. Hence \(\mathcal F\) is left exact.
	\end{proof}
	
	\begin{proposition}\label{prop:F-acyclic-Maschke}
		Assume that \(G\) is finite and \(\operatorname{char}(\kk)=0\). Then, for every injective
		object \(I\in\Rep(D)\), the \(G\)-module
		\[
		\mathcal F(I)=\Hom_{B_E}(Q,I)
		\]
		is \((-)^G\)-acyclic.
	\end{proposition}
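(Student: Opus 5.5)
The argument rests only on Maschke's theorem: under the standing hypothesis every \(\kk G\)-module is \((-)^G\)-acyclic, so the injectivity of \(I\) never needs to be used. First I would record that the invariants functor \((-)^G:\kk G\text{-}\mathsf{Mod}\to\kk\text{-}\mathsf{Mod}\) is naturally isomorphic to \(\Hom_{\kk G}(\kk,-)\), where \(\kk\) carries the trivial action. Its right derived functors are therefore
\[
R^n(-)^G(M)\cong \Ext^n_{\kk G}(\kk,M)=H^n(G,M),
\]
computed from an injective resolution of \(M\) in \(\kk G\text{-}\mathsf{Mod}\).

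Second, since \(G\) is finite and \(\operatorname{char}(\kk)=0\), the group algebra \(\kk G\) is semisimple (\cite[Theorem 8.7]{JamesLiebeck2001}). This applies to arbitrary modules, not only finite-dimensional ones: every \(\kk G\)-module is a sum of simple modules and is therefore semisimple. Consequently every short exact sequence of \(\kk G\)-modules splits, and every \(\kk G\)-module is both projective and injective. In particular the trivial module \(\kk\) is projective, so \(\Ext^n_{\kk G}(\kk,M)=0\) for all \(n\ge1\) and all \(M\).

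As a concrete alternative that avoids semisimplicity language, I could use the averaging operator \(e=\frac1{|G|}\sum_{g}g\). Multiplication by \(e\) projects \(M\) onto \(M^G\) naturally, and this shows that \((-)^G\) is exact: a surjection \(M\to N\) restricts to a surjection \(M^G\to N^G\), because for \(n\in N^G\) lifted to \(m\) the element \(em\) is an invariant lift. An exact functor has vanishing higher derived functors.

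Applying either argument to the \(G\)-module \(\mathcal F(I)=\Hom_{B_E}(Q,I)\) gives \(R^n(-)^G(\mathcal F(I))=0\) for \(n\ge1\), which is exactly \((-)^G\)-acyclicity. The only points needing care are that the \(G\)-action of Definition~\ref{def:G-action-Hom} makes \(\Hom_{B_E}(Q,I)\) a genuine \(\kk G\)-module, and that semisimplicity holds for infinite-dimensional modules. The first is Lemma~\ref{lem:G-action-Hom}; the action is also \(\kk\)-linear, since the \(t_g\) are linear. The second follows either from the averaging argument or from the fact that any module is a sum of its finitely generated submodules.
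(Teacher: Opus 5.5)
Your proof is correct and follows the paper's route. Both arguments use the semisimplicity of \(\kk G\) to get \(H^p(G,M)=0\) for all \(p>0\) and every \(\kk G\)-module \(M\), so \(\mathcal F(I)\) is \((-)^G\)-acyclic without using the injectivity of \(I\). Your extra remarks fill in points the paper leaves implicit: that semisimplicity also covers infinite-dimensional modules, and the averaging argument showing \((-)^G\) is exact.
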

	
	\begin{proof}
		Since \(G\) is finite and \(\operatorname{char}(\kk)=0\), the group algebra \(\kk G\) is
		semisimple. Hence
		\[
		H^p(G,M)=0
		\qquad
		(p>0)
		\]
		for every left \(\kk G\)-module \(M\). Therefore every \(G\)-module is \((-)^G\)-acyclic.
		In particular, \(\mathcal F(I)\) is \((-)^G\)-acyclic for every injective
		\(I\in\Rep(D)\).
	\end{proof}
	
	\begin{theorem}\label{thm:spectral}
		Assume that \(G\) is finite and \(\operatorname{char}(\kk)=0\).
		Let \(D\) be a generalized digroup and let \(Q,W\in\Rep(D)\).
		Then there exists a first-quadrant spectral sequence
		\[
		E_2^{p,q}
		\cong
		H^p\!\bigl(G,R^q\mathcal F(W)\bigr)
		\Longrightarrow
		\Ext_{\Rep(D)}^{p+q}(Q,W),
		\]
		where \(\mathcal F(W)=\Hom_{B_E}(Q,W)\) is endowed with the \(G\)-action of
		Definition~\ref{def:G-action-Hom}.
	\end{theorem}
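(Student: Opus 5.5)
This is a Grothendieck spectral sequence for the composite of \(\mathcal F\) with the invariants functor. By Lemma~\ref{lem:Hom-invariants}, \(\Hom_{\Rep(D)}(Q,-)=(-)^G\circ\mathcal F\) as functors \(\Rep(D)\to\kk\text{-}\mathsf{Mod}\). This identification is natural in the second variable, because the \(G\)-action of Definition~\ref{def:G-action-Hom} is functorial in \(W\).

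The hypotheses are met as follows:
\begin{enumerate}
\item \(\Rep(D)\) is abelian with enough injectives, by Corollaries~\ref{cor:abelian} and~\ref{cor:injectives}.
\item \(\kk G\text{-}\mathsf{Mod}\) is abelian with enough injectives.
\item \(\mathcal F\) is left exact by Lemma~\ref{lem:F-left-exact}, and \((-)^G\) is left exact.
\item \(\mathcal F\) sends injectives to \((-)^G\)-acyclic objects. This is exactly Proposition~\ref{prop:F-acyclic-Maschke}.
\end{enumerate}
The Grothendieck spectral sequence (\cite[Section~5.8]{Weibel1994}) then gives a first-quadrant spectral sequence
\[
E_2^{p,q}=R^p(-)^G\bigl(R^q\mathcal F(W)\bigr)\Longrightarrow R^{p+q}\bigl((-)^G\circ\mathcal F\bigr)(W).
\]
The abutment is \(R^{p+q}\Hom_{\Rep(D)}(Q,-)(W)=\Ext^{p+q}_{\Rep(D)}(Q,W)\), by Corollary~\ref{cor:Ext-defined}.

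For the \(E_2\)-term, I would identify \(R^p(-)^G\) with \(H^p(G,-)\). This is the standard definition, since \((-)^G=\Hom_{\kk G}(\kk,-)\).

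To write out the proof concretely, I would take an injective resolution \(W\to I^\bullet\) in \(\Rep(D)\) and apply \(\mathcal F\). I would then form a Cartan--Eilenberg resolution of the complex \(\mathcal F(I^\bullet)\) in \(\kk G\text{-}\mathsf{Mod}\) and take \(G\)-invariants of the resulting double complex. The filtration by columns has \(E_2=H^p(G,H^q(\mathcal F(I^\bullet)))=H^p(G,R^q\mathcal F(W))\). The filtration by rows collapses, because each \(\mathcal F(I^n)\) is \((-)^G\)-acyclic, and yields \(H^{n}(\mathcal F(I^\bullet)^G)=H^n(\Hom_{\Rep(D)}(Q,I^\bullet))\).

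The main obstacle is the precise meaning of \(R^q\mathcal F\). It is the derived functor of \(\mathcal F\) on \(\Rep(D)\), not a priori \(\Ext^q_{B_E}(Q,W)\). The statement only asserts \(E_2\cong H^p(G,R^q\mathcal F(W))\), so nothing more is needed here. The only care required is the naturality of the factorization in Lemma~\ref{lem:Hom-invariants}: a morphism \(W\to W'\) in \(\Rep(D)\) induces a \(G\)-equivariant map \(\Hom_{B_E}(Q,W)\to\Hom_{B_E}(Q,W')\), because it commutes with \(t^W,t^{W'}\). I would check this in one line.

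The acyclicity condition in item 4 is automatic in the Maschke regime, since every \(\kk G\)-module is then \((-)^G\)-acyclic. Consequently the argument uses no special property of injectives beyond their existence in \(\Rep(D)\).
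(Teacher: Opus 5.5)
Your proposal is correct and follows the same route as the paper. The paper also applies the Grothendieck spectral sequence to the composite $(-)^G\circ\mathcal F$: Lemma~\ref{lem:Hom-invariants} gives the factorization $\Hom_{\Rep(D)}(Q,-)=(-)^G\circ\mathcal F$, Proposition~\ref{prop:F-acyclic-Maschke} gives the acyclicity hypothesis, and it then identifies $R^p(-)^G$ with $H^p(G,-)$ and the abutment with $\Ext^{p+q}_{\Rep(D)}(Q,W)$. Your explicit naturality check and your Cartan--Eilenberg sketch are details the paper leaves implicit, but they do not change the argument.
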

	
	\begin{proof}
		By Lemma~\ref{lem:Hom-invariants}, the composite of \(\mathcal F\) with the invariants functor
		\[
		(-)^G:\kk G\text{-}\mathsf{Mod}\to \mathsf{Vect}_\kk
		\]
		is precisely
		\[
		W\longmapsto \Hom_{\Rep(D)}(Q,W).
		\]
		The invariants functor \((-)^G\) is left exact, and its right derived functors are the cohomology functors \(H^p(G,-)\); see Brown, Chapter III, especially the preliminary discussion. By proposition \ref{prop:F-acyclic-Maschke}, the Grothendieck spectral sequence (\cite[Theorem 10.47]{Rotman2009}) applies to
		the composite \((-)^G\circ \mathcal F\), and yields
		\[
		E_2^{p,q}
		=
		H^p\!\bigl(G,R^q\mathcal F(W)\bigr)
		\Longrightarrow
		R^{p+q}\!\bigl((-)^G\circ \mathcal F\bigr)(W).
		\]
		Since \(\Rep(D)\) is abelian with enough injectives by
		Corollaries~\ref{cor:abelian} and~\ref{cor:injectives}, the right derived functors of
		\[
		W\longmapsto \Hom_{\Rep(D)}(Q,W)
		\]
		are precisely \(\Ext_{\Rep(D)}^{p+q}(Q,W)\). Therefore
		\[
		E_2^{p,q}
		\cong
		H^p\!\bigl(G,R^q\mathcal F(W)\bigr)
		\Longrightarrow
		\Ext_{\Rep(D)}^{p+q}(Q,W).
		\]
	\end{proof}
	
	\begin{proposition}\label{prop:U-preserves-injectives}
		The forgetful functor
		\[
		U:\Rep(D)\longrightarrow B_E\text{-}\mathsf{Mod},
		\]
		obtained via Propositions~\ref{prop:Rep-to-semi-linear}
		and~\ref{prop:semi-linear-to-Rep}, sends injective objects of \(\Rep(D)\) to injective
		\(B_E\)-modules.
	\end{proposition}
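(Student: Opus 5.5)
The plan is to use the standard fact that a functor with an exact left adjoint preserves injectives. Accordingly, I will construct a left adjoint \(L:B_E\text{-}\mathsf{Mod}\to\Rep(D)\) to \(U\) and show that it is exact.

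Via the equivalence \(\Rep(D)\simeq\mathcal C(E,G)\) of Propositions~\ref{prop:Rep-to-semi-linear} and~\ref{prop:semi-linear-to-Rep}, the category \(\mathcal C(E,G)\) is the module category of the skew group algebra \(B_E\rtimes G=B_E\otimes\kk G\). Its multiplication is
\[
(b\otimes g)(b'\otimes h)=b\,\tau_g(b')\otimes gh,
\]
and relation \eqref{eq:C3} is exactly the commutation rule \(g\,b=\tau_g(b)\,g\). (Here \(G\) is finite by the standing hypothesis, but freeness does not even require that.) Under this description, \(U\) is restriction of scalars along the inclusion \(B_E\hookrightarrow B_E\rtimes G\), \(b\mapsto b\otimes 1\). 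The left adjoint of restriction is induction:
\[
L(M)=(B_E\rtimes G)\otimes_{B_E}M .
\]
The adjunction \(\Hom_{B_E\rtimes G}(L(M),N)\cong\Hom_{B_E}(M,U(N))\) is the usual tensor--hom adjunction, and I will transport it back to \(\Rep(D)\) through the equivalence. Concretely, \(L(M)=\bigoplus_{g\in G} g\otimes M\), with \(t_h\) permuting the summands and \(b\cdot(g\otimes m)=g\otimes\tau_{g^{-1}}(b)m\).

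The key point is exactness of \(L\). For this I will show that \(B_E\rtimes G\) is free as a right \(B_E\)-module on the basis \(\{1\otimes g\}_{g\in G}\). Indeed, \(b\otimes g=(1\otimes g)(\tau_{g^{-1}}(b)\otimes 1)\), and \(\tau_{g^{-1}}\) is bijective by Lemma~\ref{lem:tau-action}. Hence \(L(M)\cong\bigoplus_g M\) as vector spaces, naturally in \(M\), so \(L\) is exact.

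With this in hand, let \(I\) be injective in \(\Rep(D)\). Then
\[
\Hom_{B_E}(-,U(I))\cong\Hom_{\Rep(D)}(L(-),I)
\]
is a composite of the exact functor \(L\) with the exact functor \(\Hom_{\Rep(D)}(-,I)\), so it is exact and \(U(I)\) is injective.

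I expect the main obstacle to be care rather than depth. One must check that the identification of \(\mathcal C(E,G)\) with \((B_E\rtimes G)\)-modules is compatible with the forgetful functor, and that the semilinearity conventions (\(\tau_g\) versus \(\tau_{g^{-1}}\)) are consistent, so that the right-module basis claim holds. If one prefers to avoid the skew group algebra, the fallback is to define \(L(M)=\bigoplus_{g\in G}M_g\) directly as an object of \(\mathcal C(E,G)\): each \(M_g\) is a copy of \(M\) with \(B_E\)-action twisted by \(\tau_{g^{-1}}\), and \(t_h\) maps \(M_g\) to \(M_{hg}\). One then verifies \eqref{eq:C1}--\eqref{eq:C3} and the adjunction by hand, taking the unit to be \(m\mapsto m_1\).
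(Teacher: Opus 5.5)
Your proof is correct and takes the same route as the paper. The paper constructs the same left adjoint $L(M)=\bigoplus_{g\in G}M_g$, with $B_E$-action twisted by $\tau_{g^{-1}}$ and $t_h(m_g)=m_{hg}$, checks the adjunction by hand via $\Phi\mapsto\Phi(m_1)$ and $f\mapsto\bigl(m_g\mapsto t_g(f(m))\bigr)$, and concludes because $L$ is exact. Your skew group algebra framing recognizes this $L$ as induction along $B_E\hookrightarrow B_E\rtimes G$, so the adjunction is the tensor--hom adjunction and exactness follows from freeness of $B_E\rtimes G$ as a right $B_E$-module — a somewhat cleaner justification of exactness than the paper's one-line remark.
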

	
	\begin{proof}
		By Propositions~\ref{prop:Rep-to-semi-linear} and~\ref{prop:semi-linear-to-Rep}, the category
		\(\Rep(D)\) is equivalent to \(\mathcal C(E,G)\). It is therefore enough to prove the claim for
		the forgetful functor
		\[
		U:\mathcal C(E,G)\longrightarrow B_E\text{-}\mathsf{Mod}.
		\]
		
		Define a functor
		\[
		L:B_E\text{-}\mathsf{Mod}\longrightarrow \mathcal C(E,G)
		\]
		as follows. For a left \(B_E\)-module \(M\), let
		\[
		L(M):=\bigoplus_{g\in G} M_g,
		\]
		where each \(M_g\) is a copy of \(M\) as a \(\kk\)-vector space, endowed with the twisted
		\(B_E\)-action
		\[
		b\cdot m_g := \bigl(\tau_{g^{-1}}(b)m\bigr)_g
		\qquad
		(b\in B_E,\ m\in M).
		\]
		Define operators
		\[
		t_h:L(M)\to L(M)
		\qquad(h\in G)
		\]
		on homogeneous elements by
		\[
		t_h(m_g):=m_{hg}.
		\]
		We check that \(L(M)\) is an object of \(\mathcal C(E,G)\). Clearly \(t_1=\id\) and
		\(t_h t_k=t_{hk}\). Moreover,
		\[
		t_h(b\cdot m_g)
		=
		t_h\bigl((\tau_{g^{-1}}(b)m)_g\bigr)
		=
		(\tau_{g^{-1}}(b)m)_{hg},
		\]
		while
		\[
		\tau_h(b)\cdot t_h(m_g)
		=
		\tau_h(b)\cdot m_{hg}
		=
		\bigl(\tau_{(hg)^{-1}}(\tau_h(b))m\bigr)_{hg}
		=
		(\tau_{g^{-1}}(b)m)_{hg},
		\]
		so the semilinearity condition holds.
		
		We now show that \(L\) is left adjoint to \(U\). Let \(M\in B_E\text{-}\mathsf{Mod}\) and
		\((N,t)\in\mathcal C(E,G)\). Given a morphism
		\[
		\Phi:L(M)\to (N,t)
		\]
		in \(\mathcal C(E,G)\), define
		\[
		f_\Phi:M\to U(N),
		\qquad
		f_\Phi(m):=\Phi(m_1).
		\]
		Since \(\Phi\) is \(B_E\)-linear, \(f_\Phi\) is \(B_E\)-linear.
		
		Conversely, given a \(B_E\)-linear map
		\[
		f:M\to U(N),
		\]
		define
		\[
		\widetilde f:L(M)\to N
		\]
		on homogeneous elements by
		\[
		\widetilde f(m_g):=t_g(f(m)).
		\]
		This map is \(B_E\)-linear, because
		\[
		\widetilde f(b\cdot m_g)
		=
		t_g\bigl(f(\tau_{g^{-1}}(b)m)\bigr)
		=
		t_g\bigl(\tau_{g^{-1}}(b)f(m)\bigr)
		=
		b\,t_g(f(m))
		=
		b\,\widetilde f(m_g),
		\]
		and it is \(G\)-equivariant by construction. The assignments
		\[
		\Phi\longmapsto f_\Phi,
		\qquad
		f\longmapsto \widetilde f
		\]
		are inverse to one another, so
		\[
		\Hom_{\mathcal C(E,G)}(L(M),(N,t))
		\cong
		\Hom_{B_E}(M,U(N))
		\]
		naturally.
		
		Hence \(L\) is left adjoint to \(U\). Finally, \(L\) is exact: twisting a module structure by an algebra
		automorphism is exact, and direct sums are exact in module categories. Therefore \(U\), being a
		right adjoint to an exact functor, preserves injective objects.
	\end{proof}
	
	\begin{lemma}\label{lem:U-exact}
		The forgetful functor
		\[
		U:\mathcal C(E,G)\longrightarrow B_E\text{-}\mathsf{Mod}
		\]
		is exact. Consequently, via the equivalence
		\(\Rep(D)\simeq\mathcal C(E,G)\), the functor
		\[
		U:\Rep(D)\longrightarrow B_E\text{-}\mathsf{Mod}
		\]
		is exact.
	\end{lemma}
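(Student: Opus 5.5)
The plan is to describe kernels, cokernels and images in \(\mathcal C(E,G)\) explicitly and observe that they are computed on underlying \(B_E\)-modules. Exactness of \(U\) then reduces to exactness of the underlying vector spaces.

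Let \(f:(M,t)\to(N,s)\) be a morphism in \(\mathcal C(E,G)\). I would first show that \(\ker f\), computed in \(B_E\text{-}\mathsf{Mod}\), is stable under every \(t_g\). Indeed, \(f t_g = s_g f\), so \(f(m)=0\) implies \(f(t_g m)=0\). The restricted operators are automorphisms, since \(t_{g^{-1}}\) also preserves \(\ker f\) and is inverse to \(t_g\). They satisfy \eqref{eq:C1}--\eqref{eq:C3} because these identities hold on \(M\). Dually, \(s_g\) preserves \(\operatorname{im} f\), so it descends to \(N/\operatorname{im} f\). The descended maps are invertible, and semilinearity passes to the quotient because the \(B_E\)-action on the quotient is induced. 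Next I would check the universal properties: a morphism into \((M,t)\) that is killed by \(f\) factors through \(\ker f\) as a \(B_E\)-linear map, and this factorization is automatically \(G\)-equivariant, since the inclusion is injective and intertwines the operators. The cokernel is handled the same way. Hence kernels and cokernels in \(\mathcal C(E,G)\) are the \(B_E\)-module kernels and cokernels equipped with the induced \(G\)-operators, and \(U\) preserves them. A sequence is exact in an abelian category exactly when the kernel/image comparisons are isomorphisms. Since \(U\) preserves kernels and cokernels, and so images, \(U\) carries exact sequences to exact sequences.

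A shortcut is also available: an additive functor that has both a left and a right adjoint is exact. We already have the left adjoint \(L\) from Proposition~\ref{prop:U-preserves-injectives}, which gives left exactness. For right exactness one would construct a right adjoint, namely the coinduced object \(\prod_{g\in G} M_g\) with a twisted action. Because \(G\) is finite, this coincides with \(L(M)\). I would not rely on this route, since the direct verification above is elementary and avoids the finiteness hypothesis.

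For the final assertion, I transport the result along the equivalence \(\Rep(D)\simeq\mathcal C(E,G)\) of Proposition~\ref{prop:semi-linear-to-Rep}. Equivalences of abelian categories are exact, and the composite with them is the forgetful functor \(V\mapsto V\) with \(\varepsilon_\alpha\) acting by \(\lambda_{(1,\alpha)}\). Exact sequences in \(\Rep(D)\) are the ones exact on underlying spaces, by Theorem~\ref{thm:equivalence}, so exactness is visible directly.

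The only step needing care is confirming that the induced operators on the quotient are automorphisms and remain \(\tau_g\)-semilinear. Both facts follow because \(s_{g^{-1}}\) also descends and is inverse to \(s_g\).
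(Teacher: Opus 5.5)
Your proposal is correct and follows the same route as the paper: kernels and cokernels in \(\mathcal C(E,G)\) are the underlying \(B_E\)-module kernels and cokernels with the induced \(G\)-operators, so exactness is detected after forgetting the \(G\)-action. You supply the details that the paper's one-line proof leaves implicit, namely stability under \(t_g\), invertibility and semilinearity of the induced operators, and the universal properties; your adjoint-functor aside is not needed.
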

	
	\begin{proof}
		Kernels and cokernels in \(\mathcal C(E,G)\) are computed on the underlying
		\(B_E\)-modules, endowed with the induced \(G\)-action by restriction and passage to
		quotients. Therefore a sequence in \(\mathcal C(E,G)\) is exact if and only if it is
		exact after forgetting the \(G\)-action. Hence \(U\) is exact.
	\end{proof}
	
	The next proposition identifies the \(q\)-direction with genuine halo cohomology
	by using the fact that the forgetful functor preserves injective objects.
	
	\begin{proposition}\label{prop:identify-derived}
		For every \(q\ge 0\),
		\[
		R^q\mathcal F(W)\cong \Ext_{B_E}^q(Q,W)
		\]
		as left \(G\)-modules. Consequently, the spectral sequence of
		Theorem~\ref{thm:spectral} takes the form
		\[
		E_2^{p,q}
		\cong
		H^p\!\bigl(G,\Ext_{B_E}^q(Q,W)\bigr)
		\Longrightarrow
		\Ext_{\Rep(D)}^{p+q}(Q,W).
		\]
	\end{proposition}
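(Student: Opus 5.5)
The derived functors \(R^q\mathcal F\) will be computed directly from an injective resolution in \(\Rep(D)\), and Proposition~\ref{prop:U-preserves-injectives} together with Lemma~\ref{lem:U-exact} will show that forgetting the \(G\)-action turns this resolution into an injective resolution of \(B_E\)-modules. Fix \(W\in\Rep(D)\) and choose an injective resolution
\[
0\to W\to I^0\to I^1\to I^2\to\cdots
\]
in \(\Rep(D)\); this exists by Corollary~\ref{cor:injectives}. By definition, \(R^q\mathcal F(W)\) is the \(q\)-th cohomology of the complex of \(G\)-modules \(\Hom_{B_E}(Q,I^\bullet)\), where each term carries the action of Definition~\ref{def:G-action-Hom}, and the differentials are \(G\)-equivariant because that action is functorial (Lemma~\ref{lem:F-left-exact}).

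\textbf{Step 1.} Apply the forgetful functor \(U\). By Lemma~\ref{lem:U-exact}, \(U\) is exact, so \(0\to U(W)\to U(I^0)\to U(I^1)\to\cdots\) remains exact. By Proposition~\ref{prop:U-preserves-injectives}, each \(U(I^n)\) is an injective \(B_E\)-module. Hence \(U(I^\bullet)\) is an injective resolution of \(W\) as a \(B_E\)-module. Its image under \(\Hom_{B_E}(Q,-)\) is, as a complex of vector spaces, literally the complex \(\Hom_{B_E}(Q,I^\bullet)\) above. Therefore the underlying vector space of \(R^q\mathcal F(W)\) equals \(\Ext^q_{B_E}(Q,W)\).

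\textbf{Step 2.} Match the \(G\)-actions. The \(G\)-action on \(\Ext^q_{B_E}(Q,W)\) should be defined by precisely this recipe: take the cohomology of \(\Hom_{B_E}(Q,I^\bullet)\) for an injective resolution in \(\Rep(D)\), with the action \eqref{eq:GactionHom} applied termwise. One then has to check that this is well defined, that is, independent of the resolution. Any two injective resolutions in \(\Rep(D)\) are related by comparison maps in \(\Rep(D)\), unique up to homotopies in \(\Rep(D)\). These maps are \(B_E\)-linear and commute with the \(t_g\), so the induced maps on \(\Hom_{B_E}(Q,-)\) are \(G\)-equivariant chain maps and equivariant homotopies. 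This yields a canonical \(G\)-equivariant isomorphism on cohomology, and hence \(R^q\mathcal F(W)\cong\Ext^q_{B_E}(Q,W)\) as \(G\)-modules.

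The spectral sequence statement then follows by substituting this identification into Theorem~\ref{thm:spectral}. It remains natural in \(W\), since the same comparison argument applies to morphisms \(W\to W'\).

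The main obstacle is Step 2. The \(G\)-action on \(\Ext_{B_E}\) is not intrinsic to \(B_E\)-module theory, because \(t_g\) is only \(\tau_g\)-semilinear. It can be computed from a resolution by \(B_E\)-modules only if the resolution also carries compatible semilinear \(t_g\), and the resolutions coming from \(\Rep(D)\) are the natural ones with this property. As a cross-check I would first try to verify that this action agrees with the standard description: transport by the pair \((t^Q_g,t^W_g)\) along twisting by \(\tau_g\), using that restriction along \(\tau_g\) is an exact autoequivalence of \(B_E\text{-}\mathsf{Mod}\) preserving injectives. In degree \(q=0\), this agreement reduces to Definition~\ref{def:G-action-Hom}.
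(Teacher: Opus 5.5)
Your proposal is correct and follows the same route as the paper: you push an injective resolution in \(\Rep(D)\) through the exact, injective-preserving functor \(U\), observe that the two Hom complexes coincide, and read off the \(G\)-action termwise from Definition~\ref{def:G-action-Hom}. Your Step 2, which proves independence of the resolution via equivariant comparison maps and homotopies, makes explicit a well-definedness point that the paper's proof leaves tacit; the paper likewise never defines a \(G\)-action on \(\Ext_{B_E}^q(Q,W)\) except through such a resolution.
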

	
	\begin{proof}
		Let
		\[
		0\to W\to I^0\to I^1\to \cdots
		\]
		be an injective resolution in \(\Rep(D)\). By Lemma~\ref{lem:U-exact} and Proposition~\ref{prop:U-preserves-injectives},
		the complex \(U(I^\bullet)\) is an injective resolution of \(W\) in
		\(B_E\text{-}\mathsf{Mod}\). Hence the cohomology of the complex
		\[
		0\to \Hom_{B_E}(Q,U(I^0))\to \Hom_{B_E}(Q,U(I^1))\to \cdots
		\]
		computes \(\Ext_{B_E}^q(Q,W)\). Since \(U\) does not change the underlying
		\(B_E\)-module, this is exactly the complex
		\[
		0\to \Hom_{B_E}(Q,I^0)\to \Hom_{B_E}(Q,I^1)\to \cdots
		\]
		computing \(R^q\mathcal F(W)\). The \(G\)-action on the complex is the one from
		Definition~\ref{def:G-action-Hom}, so the resulting identification is
		\(G\)-equivariant.
	\end{proof}
	
	\subsection{Corollaries}
	
	We now extract the consequences most relevant for Maschke-type splitting.
	
	\begin{corollary}[Collapse in the Maschke regime]\label{cor:collapse}
		Assume that \(G\) is finite and \(\operatorname{char}(\kk)=0\). Then the spectral sequence collapses at the
		\(E_2\)-page, and for every \(n\ge 0\),
		\[
		\Ext_{\Rep(D)}^n(Q,W)\cong \Ext_{B_E}^n(Q,W)^G.
		\]
	\end{corollary}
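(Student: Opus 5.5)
The plan is to feed the vanishing of higher group cohomology in the Maschke regime into the spectral sequence of Proposition~\ref{prop:identify-derived}. Since \(G\) is finite and \(\operatorname{char}(\kk)=0\), the group algebra \(\kk G\) is semisimple, as already used in Proposition~\ref{prop:F-acyclic-Maschke}. Hence every left \(\kk G\)-module is injective, and \(H^p(G,M)=0\) for all \(p>0\) and every \(G\)-module \(M\). Applying this to \(M=\Ext^q_{B_E}(Q,W)\), with the \(G\)-action of Definition~\ref{def:G-action-Hom} transported through the identification of Proposition~\ref{prop:identify-derived}, we get
\[
E_2^{p,q}\cong H^p\bigl(G,\Ext^q_{B_E}(Q,W)\bigr)=0\qquad(p>0),
\]
while \(E_2^{0,q}\cong H^0\bigl(G,\Ext^q_{B_E}(Q,W)\bigr)=\Ext^q_{B_E}(Q,W)^G\).

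The \(E_2\)-page is therefore concentrated in the single column \(p=0\). Every differential \(d_r:E_r^{p,q}\to E_r^{p+r,q-r+1}\) with \(r\ge 2\) either starts or ends in a column \(p\ge 2\), which is zero. Hence all \(d_r\) vanish and \(E_2=E_\infty\), which is the claimed collapse. For each \(n\), the filtration on \(\Ext^n_{\Rep(D)}(Q,W)\) has associated graded pieces \(E_\infty^{p,n-p}\), and only \(p=0\) is nonzero. The abutment is therefore isomorphic to \(E_\infty^{0,n}=E_2^{0,n}=\Ext^n_{B_E}(Q,W)^G\).

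It may be cleaner to bypass the spectral-sequence bookkeeping entirely. Take an injective resolution \(W\to I^\bullet\) in \(\Rep(D)\). Lemma~\ref{lem:Hom-invariants} gives \(\Hom_{\Rep(D)}(Q,I^\bullet)=\Hom_{B_E}(Q,I^\bullet)^G\). Because \(\kk G\) is semisimple, the invariants functor \((-)^G\) is exact, so it commutes with taking cohomology of this complex of \(G\)-modules. The cohomology of \(\Hom_{B_E}(Q,I^\bullet)\) is \(\Ext^n_{B_E}(Q,W)\) \(G\)-equivariantly, by Proposition~\ref{prop:identify-derived}, which rests on Proposition~\ref{prop:U-preserves-injectives} and Lemma~\ref{lem:U-exact}. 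This yields the isomorphism directly and also shows it is natural.

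The only point requiring care, and the main obstacle, is the \(G\)-equivariance of the identification \(R^q\mathcal F(W)\cong\Ext^q_{B_E}(Q,W)\). The fixed points must be taken for the action induced on cohomology from Definition~\ref{def:G-action-Hom}. To handle this, I would check that the coboundary maps of \(\Hom_{B_E}(Q,I^\bullet)\) are \(G\)-equivariant, because the differentials of \(I^\bullet\) commute with \(t_g\). I would then note that an exact functor preserves cohomology. With that in hand, the identification holds as \(G\)-modules and the collapse argument goes through.
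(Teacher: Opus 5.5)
Your proposal is correct and follows the paper's argument. Semisimplicity of \(\kk G\) kills \(E_2^{p,q}\cong H^p\bigl(G,\Ext^q_{B_E}(Q,W)\bigr)\) for \(p>0\) via Proposition~\ref{prop:identify-derived}, so the sequence collapses and \(\Ext^n_{\Rep(D)}(Q,W)\cong E_2^{0,n}=\Ext^n_{B_E}(Q,W)^G\); your handling of the differentials and the filtration just spells out what the paper leaves implicit. Your second route is also valid and uses the same ingredients (Lemma~\ref{lem:Hom-invariants}, Proposition~\ref{prop:identify-derived}, Proposition~\ref{prop:U-preserves-injectives}, Lemma~\ref{lem:U-exact}). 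In it you apply the exact functor \((-)^G\) directly to the complex \(\Hom_{B_E}(Q,I^\bullet)\), which bypasses the Grothendieck spectral sequence entirely.
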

	
\begin{proof}
	Since \(G\) is finite and \(\operatorname{char}(\kk)=0\), the group algebra \(\kk[G]\) is semisimple.
	Hence
	\[
	H^p(G,M)=0
	\qquad
	(p>0)
	\]
	for every left \(\kk G\)-module \(M\). In particular, Proposition~\ref{prop:F-acyclic-Maschke}
	applies. Moreover, by Proposition~\ref{prop:identify-derived}, the \(E_2\)-page of the spectral
	sequence is
	\[
	E_2^{p,q}\cong H^p\!\bigl(G,\Ext_{B_E}^q(Q,W)\bigr).
	\]
	
	Since \(H^p(G,-)=0\) for every \(p>0\), all terms with \(p>0\) vanish. Therefore the spectral
	sequence collapses at the \(E_2\)-page, and for every \(n\ge 0\),
	\[
	\Ext_{\Rep(D)}^n(Q,W)\cong E_2^{0,n}
	=
	H^0\!\bigl(G,\Ext_{B_E}^n(Q,W)\bigr)
	=
	\Ext_{B_E}^n(Q,W)^G.
	\]
\end{proof}
	
	\begin{corollary}[Splitting criterion]\label{cor:splitting-criterion}
		Assume that \(G\) is finite and \(\operatorname{char}(\kk)=0\). If
		\[
		\Ext_{B_E}^1(Q,W)^G=0,
		\]
		then every short exact sequence
		\[
		0\to W\to V\to Q\to 0
		\]
		in \(\Rep(D)\) splits.
	\end{corollary}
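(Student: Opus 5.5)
The plan is to deduce the criterion directly from Corollary~\ref{cor:collapse}, using the Yoneda interpretation of \(\Ext^1\) that was already invoked in the proof of Theorem~\ref{thm:Ext1-cocycle}. Under the standing hypotheses (\(G\) finite, \(\operatorname{char}(\kk)=0\)), Corollary~\ref{cor:collapse} with \(n=1\) gives
\[
\Ext^1_{\Rep(D)}(Q,W)\cong \Ext^1_{B_E}(Q,W)^G .
\]
By assumption the right-hand side vanishes, so \(\Ext^1_{\Rep(D)}(Q,W)=0\).

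Next I take an arbitrary short exact sequence \(0\to W\to V\to Q\to 0\) in \(\Rep(D)\). By Corollary~\ref{cor:abelian}, \(\Rep(D)\) is abelian, and it has enough injectives and projectives. So Yoneda's description applies: \(\Ext^1_{\Rep(D)}(Q,W)\) classifies such extensions up to equivalence, and the zero element is the class of the split extension \(W\oplus Q\). Since the group is zero, the given extension is equivalent to the split one. An equivalence of extensions transports the canonical section \(Q\to W\oplus Q\) to a section \(Q\to V\) in \(\Rep(D)\), so the original sequence splits.

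As an alternative route that stays inside the paper's own machinery, I could bypass Yoneda and use Theorem~\ref{thm:Ext1-cocycle}. That theorem identifies \(\Ext^1_{\Rep(D)}(Q,W)\) with \(Z^1(Q,W)/B^1(Q,W)\), with the zero class corresponding exactly to split sequences. Once \(\Ext^1\) is known to vanish, every cocycle is a coboundary, and Step~6 of that proof produces an honest splitting.

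The main point needing care is that the isomorphism in Corollary~\ref{cor:collapse} is only needed to conclude vanishing. Any bijection onto the zero group suffices, so no compatibility of the isomorphism with Yoneda composition or with the cocycle bijection has to be checked. Beyond that, the argument is a direct specialization of the collapse result, and I expect no genuine obstacle.
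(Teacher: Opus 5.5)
Your proposal is correct and follows the paper's proof. Both arguments specialize Corollary~\ref{cor:collapse} to \(n=1\) to get \(\Ext^1_{\Rep(D)}(Q,W)\cong\Ext^1_{B_E}(Q,W)^G=0\), and then use that vanishing of \(\Ext^1\) in the abelian category \(\Rep(D)\) forces every extension of \(Q\) by \(W\) to split; the paper states this last step in one line, while you justify it via the Yoneda description, and your cocycle route through Theorem~\ref{thm:Ext1-cocycle} is an equally valid but unnecessary alternative.
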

	
	\begin{proof}
		By Corollary~\ref{cor:collapse},
		\[
		\Ext_{\Rep(D)}^1(Q,W)\cong \Ext_{B_E}^1(Q,W)^G.
		\]
		Hence the hypothesis implies \(\Ext_{\Rep(D)}^1(Q,W)=0\). In any abelian category, this is
		equivalent to the splitting of every short exact sequence
		\[
		0\to W\to V\to Q\to 0.
		\]
	\end{proof}
	
	\begin{corollary}[Criterion for non-semisimplicity]\label{cor:nonsemisimple}
		Assume that \(G\) is finite and \(\operatorname{char}(\kk)=0\). If there exist objects \(Q,W\in\Rep(D)\) such that
		\[
		\Ext_{B_E}^1(Q,W)^G\neq 0,
		\]
		then the category \(\Rep(D)\) is not semisimple.
	\end{corollary}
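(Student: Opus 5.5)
The plan is to deduce this directly from Corollary~\ref{cor:collapse}, using the standard characterization of semisimple abelian categories. Fix \(Q,W\in\Rep(D)\) with \(\Ext_{B_E}^1(Q,W)^G\neq 0\). Since \(G\) is finite and \(\operatorname{char}(\kk)=0\), Corollary~\ref{cor:collapse} with \(n=1\) gives
\[
\Ext_{\Rep(D)}^1(Q,W)\cong \Ext_{B_E}^1(Q,W)^G\neq 0 .
\]
So there is a nonzero element of \(\Ext^1_{\Rep(D)}(Q,W)\).

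Next, I pass to extensions. By Corollary~\ref{cor:abelian} and Corollary~\ref{cor:injectives}, \(\Rep(D)\) is abelian with enough injectives. The Yoneda description of \(\Ext^1\) (already used in the proof of Theorem~\ref{thm:Ext1-cocycle}) then applies, and the zero class is exactly the class of the split extension. Hence a nonzero class is represented by a short exact sequence
\[
0\to W\to V\to Q\to 0
\]
in \(\Rep(D)\) which does not split. One could equally use Theorem~\ref{thm:Ext1-cocycle}, whose last assertion says the zero class corresponds precisely to split sequences.

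To conclude, recall that an abelian category is semisimple exactly when every object is a direct sum of simple objects. Equivalently, every subobject is a direct summand, so every short exact sequence splits. The nonsplit sequence above exhibits \(W\) as a subobject of \(V\) with no complement, so \(\Rep(D)\) is not semisimple.

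The only step needing care is this equivalence between the two notions of semisimplicity, which I will state briefly. Under the equivalence \(\Rep(D)\simeq A_D\text{-}\mathsf{Mod}\) of Theorem~\ref{thm:equivalence}, it reduces to the familiar module-theoretic fact that a module is semisimple if and only if every submodule is a direct summand, and that direct summands of semisimple modules are semisimple. The rest is formal.
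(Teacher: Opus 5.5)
Your proposal is correct and follows the paper's proof. You apply Corollary~\ref{cor:collapse} with \(n=1\) to get \(\Ext^1_{\Rep(D)}(Q,W)\neq 0\), hence a nonsplit extension of \(Q\) by \(W\), hence \(\Rep(D)\) is not semisimple; your extra justification via the Yoneda description and the equivalence \(\Rep(D)\simeq A_D\text{-}\mathsf{Mod}\) (that semisimplicity forces every short exact sequence to split) just makes explicit a step the paper leaves implicit.
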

	
	\begin{proof}
		Again by Corollary~\ref{cor:collapse},
		\[
		\Ext_{\Rep(D)}^1(Q,W)\cong \Ext_{B_E}^1(Q,W)^G.
		\]
		If the latter is nonzero, then \(\Ext_{\Rep(D)}^1(Q,W)\neq 0\). Therefore there exists a nontrivial
		extension of \(Q\) by \(W\) in \(\Rep(D)\), and such an extension cannot split. Hence
		\(\Rep(D)\) is not semisimple.
	\end{proof}
	
	\begin{remark}
		Theorem~\ref{thm:spectral} and its corollaries isolate the precise mechanism behind the failure of
		a full Maschke theorem for generalized digroups. The group side controls the invariant functor and
		therefore the \(p\)-direction of the spectral sequence, while the halo algebra controls the
		\(q\)-direction. The obstruction to semisimplicity is therefore genuinely two-layered.
	\end{remark}

\end{document}